\newcommand{\ER} {Erd\H{o}s-R\'enyi }
\newcommand{\intersect}{\cap}
\newcommand{\union}{\cup}
\newcommand{\old}[1]{}
\newcommand{\p}{\mathbf{p}}
\renewcommand{\P}{\mathbb{P}}
\newcommand{\E}{{\mathbb E}}
\newcommand{\df}{\textbf}
\newcommand{\fp}{\mathsf{FP}}
\newcommand{\cN}{\mathcal{N}}
\newcommand{\cP}{\mathcal{P}}
\newcommand{\vn}{\mathcal{N}}
\newcommand{\vp}{\mathcal{P}}
\newcommand{\vd}{\mathcal{D}}
\newcommand{\vnm}{\widetilde{\mathcal{N}}}
\newcommand{\vpm}{\widetilde{\mathcal{P}}}
\newcommand{\vdm}{\widetilde{\mathcal{D}}}
\newcommand{\vei}{\mathcal{E}^{(1)}}
\newcommand{\vsii}{\mathcal{S}^{(2)}}
\newcommand{\veii}{\mathcal{E}^{(2)}}
\newcommand{\vsi}{\mathcal{S}^{(1)}}
\newcommand{\bp}{\mathbf{p}}
\newcommand{\bq}{\mathbf{q}}
\newcommand{\cA}{\mathcal{A}}
\newcommand{\cB}{\mathcal{B}}
\newcommand{\cC}{\mathcal{C}}
\newcommand{\cS}{\mathcal{S}}
\newcommand{\cQ}{\mathcal{Q}}
\newcommand{\G}{\mathcal{G}}
\newcommand{\T}{\mathcal{T}}
\newcommand{\n}{{N}}
\renewcommand{\p}{{P}}
\renewcommand{\d}{{D}}
\newcommand{\nm}{\widetilde{{N}}}
\renewcommand{\pm}{\widetilde{{P}}}
\newcommand{\dm}{\widetilde{{D}}}
\newcommand{\ei}{{E}^{(1)}}
\newcommand{\sii}{{S}^{(2)}}
\newcommand{\eii}{{E}^{(2)}}
\newcommand{\si}{{S}^{(1)}}
\newcommand{\tildebp}{\widetilde{\bp}}
\newcommand{\tbp}{\widehat{\bp}}
\newcommand{\tH}{\widehat{H}}
\newcommand{\tF}{\widehat{F}}
\newcommand{\tN}{\widetilde{N}}
\newcommand{\tP}{\widetilde{P}}
\newcommand{\tD}{\widetilde{D}}
\newcommand{\tx}{\tilde{x}}
\newcommand{\tp}{\tilde{p}}
\newtheorem{thm}{Theorem}
\newtheorem{lemma}[thm]{Lemma}
\newtheorem{prop}[thm]{Proposition}
\newtheorem{cor}[thm]{Corollary}
\theoremstyle{definition}
\crefname{thm}{Theorem}{Theorems}
\crefname{lemma}{Lemma}{Lemmas}
\crefname{prop}{Proposition}{Propositions}
\crefname{cor}{Corollary}{Corollaries}
\crefname{section}{Section}{Sections}
\crefname{table}{Table}{Tables}
\crefname{definition}{Definition}{Definitions}
\crefname{example}{Example}{Examples}
\crefname{figure}{Figure}{Figures}
\newcounter{mycount}
\title{Galton-Watson Games}
\author{Alexander E. Holroyd}
\address{A. E. Holroyd}
\email{holroyd@uw.edu}
\author{James B. Martin}
\address{J. B. Martin, Department of Statistics,
University of Oxford}
\email{martin@stats.ox.ac.uk}
\keywords{Branching process, combinatorial game, random game, phase transition}
\subjclass[2010]{05C57; 60J80; 91A15}
\date{April 8, 2019}
\begin{document}
\begin{abstract}We consider two-player combinatorial games in which the graph of positions is random and perhaps infinite,
focusing on directed Galton-Watson trees.
As the offspring distribution is varied, a
game can undergo a phase transition, in which the probability of a draw under optimal play becomes positive.
We study the nature of the phase transitions which
occur for normal play rules (where a player unable to move loses the game) and mis\`ere rules (where a player unable to move wins), as well as for an ``escape game'' in which one
player tries to force the game to end while the other tries to prolong it forever.  For instance, for a Poisson$(\lambda)$
offspring distribution, the game tree is infinite with
positive probability as soon as $\lambda>1$, but the game
with normal play has positive probability of draws if and
only if $\lambda>e$. The three games generally have
different critical points; under certain assumptions
the transitions are continuous for the normal and mis\`ere games and discontinuous for the escape game, but we also
discuss cases where the opposite possibilities occur. We connect the nature of the phase transitions to the behaviour
of quantities such as the expected length of the game under
optimal play. We also establish inequalities relating the games to each other; for instance, the
probability of a draw is at least as great in the mis\`ere game as in the normal game.
\end{abstract}

\maketitle

\section{Introduction}
Game theory naturally often focuses on carefully chosen games for which interesting mathematical analysis is possible.  What can be said about games in the wild?  One approach to this question is to consider games whose rules are \emph{typical}, i.e.\ chosen at random, although known to the players.  In this article we consider rules arising from random trees.

We consider combinatorial games whose positions and moves are described by a directed
acyclic graph $\G$.  A token is located at a vertex, and the two players take
turns to move it along a directed edge to a new vertex.  In the \df{normal
game}, a player \df{loses} the game if they cannot move (that is, if the
token is at a vertex with outdegree zero), and the other player \df{wins}.

%Here are three different rules for the determining the game
%outcome. In the \df{normal game}, a player \df{loses} the
%game when they cannot move (that is, when the bead is at a
%vertex with outdegree 0), and the other player \df{wins}.
%On the other hand, in the \df{mis\`ere game}, a player
%\emph{wins} if they cannot move.  Finally, in the
%\df{escape game}, the two players have different goals. One
%designated player, the \df{stopper}, wins if either player
%is ever unable to move, and the other player, the
%\df{escaper}, loses.

We are interested in optimal play.  Thus, a \df{strategy} for a particular
player is a map that assigns a legal move for that player (where one exists)
to every vertex.  For a given starting vertex for the token, a strategy is
\df{winning} if it yields a win for that player, no
matter what strategy the other player uses.  Fix a starting vertex.  If $\G$
is finite, then it is easily seen that exactly one player has a winning
strategy; we then say that the game is a \df{win} for that player (and a
\df{loss} for the other).  More interestingly, if $\G$ is infinite, then it
is possible that neither player has a winning strategy, in which case we say
that the game is a \df{draw}.

We also consider two other rules for determining the game outcome. In the
\df{mis\`ere game}, a player \emph{wins} if they cannot move.  In the
\df{escape game}, the two players have distinct goals. One designated player,
called \df{Stopper}, wins if either player is ever unable to move, in which case
the other player, \df{Escaper}, loses.  If Stopper has no winning
strategy then the game is said to be a win for Escaper.

In a sense there is no loss of generality in assuming that $\G$ is a directed tree: if not, every game position may be augmented with a record of the sequence of moves that led to it; these augmented positions then form a tree.

We focus on Galton-Watson trees.  Thus, let
$\G=\T$ be the graph of a Galton-Watson branching process of offspring
probability mass function $\bp=(p_0,p_1,\ldots)$, with directed edges from
parents to children.  Let the token start at the root vertex $o$. We emphasize that
although the graph is random, it is assumed known to both players when
deciding on their strategies.

Let $N=N(\bp)$ be the probability that the normal game is a
win for the first (``Next'') player, let $P=P(\bp)$ be the
probability that it is a win for the second (``Previous'')
player, and let $D=1-N-P$ be the probability that it is a
draw.  Let $\tN,\tP,\tD$ be the analogous probabilities for
the mis\`ere game. For the escape game, let $\si$
(respectively, $\sii$) be the probabilities that the
stopper wins assuming the stopper has the first (respectively,
second) move.  Similarly let $\ei=1-\sii$ and $\eii=1-\si$ be
the win probabilities for the escaper when moving first or
second respectively.

It is well known that the Galton-Watson process exhibits a phase transition:
the process survives (i.e.\ $\T$ is infinite) with positive probability if
and only $\mu>1$ (or $p_1=1$), where $\mu:=\sum_i ip_i$ is the mean of the
offspring distribution. However, survival is not sufficient for
the existence of a draw -- intuitively, that requires
not just an infinite path, but an infinite path that
neither player can profitably deviate from.
Indeed, we will find that the draw and
escape probabilities $D,\tD,\ei,\eii$ undergo phase transitions as $\bp$ is varied, but typically \emph{not} at the same location as the survival phase transition.

The model can be analyzed in terms of generating functions. Let
$G(x)=G_{\bp}(x):=\sum_{i=0}^\infty p_i x^i$ be the generating function of the
offspring distribution. It is also convenient to define the functions $F=1-G$
and $H=1-G+p_0$. We denote iterates of functions by superscripts:
$F^2(x)=(F\circ F)(x)=F(F(x))$, etc.  Let $\fp(f)=\fp_{[0,1]}(f):=\{x\in[0,1]: f(x)=x\}$ denote
the set of fixed points of a function $f$ in the interval $[0,1]$.

\begin{thm}[Fixed points]\label{main}
For the normal, mis\`ere, and escape games played on a Galton-Watson tree
with offspring distribution $\bp$, we have:
%\begin{enumerate}[\rm (i)]
%  \item $D=\max \fp(F^2) - \min \fp(F^2);$
%  \item $\dm=\max \fp(H^2) - \min \fp(H^2);$
%  \item $\ei=\max \fp(F\circ H)$.
%\end{enumerate}
\begin{enumerate}[\rm (i)]
  \item $D=\max \fp(F^2) - \min \fp(F^2);\;\; N=\min \fp(F^2);\;\;
  P=1-\max\fp(F^2)$;
  \item $\tD=\max \fp(H^2) - \min \fp(H^2);\;\; \tN=\min \fp(H^2);\;\;
  \tP=1-\max\fp(H^2)$;
  \item $\ei=\max \fp(F\circ H);\;\; \eii=1-\min \fp(H\circ F)$.
\end{enumerate}
\end{thm}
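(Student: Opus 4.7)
The plan is to express each of the six game probabilities as the limit of a finite-horizon approximation, to use the i.i.d.\ branching structure of $\T$ to reduce the resulting recursion to an iteration of one of the scalar maps $F^2$, $H^2$, $F \circ H$, or $H \circ F$, and then to identify the limit with the smallest or the largest fixed point of that map in $[0,1]$.

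Let $N_n$ denote the probability that Next has a strategy to win the normal game within $n$ plies and $P_n$ the analogue for Previous, with $N_0=0$ and $P_0=p_0$. Conditioning on the offspring of the root, and using that the child subtrees are i.i.d.\ copies of $\T$ in which the mover is the original opponent, the root is a depth-$(n{+}1)$ win for the mover iff some child subtree is a depth-$n$ loss for its own mover; hence
\begin{equation*}
N_{n+1} = 1 - G(1-P_n) = F(1-P_n), \qquad 1-P_{n+1} = F(N_n),
\end{equation*}
so that $N_{n+2} = F^2(N_n)$ and $1-P_{n+2} = F^2(1-P_n)$. The mis\`ere recursion is identical except that the $k=0$ contribution moves from the opponent's column to the mover's, which replaces $F$ throughout by $H = F + p_0$ and shifts the initial values to $\tN_0 = p_0$, $\tP_0 = 0$. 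For the escape game the two players have distinct goals, so I would carry a pair $(\ei_n, \si_n)$ with $\ei_0 = 1$, $\si_0 = 0$; a parallel case analysis (Escaper does not gain at leaves but Stopper does) gives $\ei_{n+1} = F(\si_n)$ and $\si_{n+1} = H(\ei_n)$, hence $\ei_{n+2} = (F \circ H)(\ei_n)$ and $\si_{n+2} = (H \circ F)(\si_n)$. Each of these four maps is a composition of two decreasing continuous functions, so is a continuous nondecreasing self-map of $[0,1]$, and since $F([0,1]) = [0, 1-p_0]$ and $H([0,1]) = [p_0, 1]$ every fixed point of $F^2$ lies in $[0, 1-p_0]$ and every fixed point of $H^2$ in $[p_0, 1]$. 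A monotone induction then shows that the mover sequences $N_{2n}, \tN_{2n}, \si_{2n}$ are nondecreasing, with initial values at or below every fixed point of the relevant map, while the opponent sequences $1-P_{2n}, 1-\tP_{2n}, \ei_{2n}$ are nonincreasing, with initial values at or above every fixed point. Since iterates of a continuous nondecreasing map starting below every fixed point converge monotonically up to the smallest fixed point (and dually from above to the largest), the even subsequences converge respectively to the smallest and to the largest fixed point in $[0,1]$ of the corresponding map.

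The remaining and most delicate step is to show that the truncated probabilities actually converge to the infinite-game values: $N_n \uparrow N$, $P_n \uparrow P$, and their analogues, with $\ei_n \downarrow \ei$ in the escape game. One direction is immediate, since a depth-$n$ winning strategy is a fortiori a winning strategy in the infinite game (and for Escaper, a strategy that survives forever survives any fixed number of moves). For the reverse direction I would invoke K\"onig's lemma in the normal, mis\`ere, and Stopper cases: on any realization of $\T$, a winning strategy for the mover determines a locally finite subtree of consistent plays (one child at each mover-position, finitely many at each opponent-position), every branch of which terminates because the mover wins; K\"onig then gives a uniform depth bound. For the escape-as-Escaper direction a dual compactness argument applies: $\T$ is almost surely locally finite, so the space of Escaper strategies is a countable product of finite sets and hence compact in the product topology, and the decreasing sequence of closed events ``Escaper's strategy survives at least $n$ moves'' has nonempty intersection whenever each member is nonempty. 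Combining these convergence statements with the fixed-point identification from the previous paragraph yields parts (i), (ii), and (iii); the formulas for $D$, $\tD$, and $\eii = 1 - \si$ follow by subtraction.
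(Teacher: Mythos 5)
Your proposal is correct and follows essentially the same route as the paper: truncate to depth $n$, use the i.i.d.\ subtree structure to reduce the root recursion to the scalar iterations $N_{n+2}=F^2(N_n)$, $1-P_{n+2}=F^2(1-P_n)$ (and likewise with $H^2$, $F\circ H$, $H\circ F$), identify the monotone limits with $\min\fp$ and $\max\fp$ of the relevant map, and justify $N_n\uparrow N$ etc.\ by a compactness argument. The only point of divergence is how that compactness lemma is proved: the paper argues by contradiction that if a player could win but not within any bounded number of moves, the opponent would have a drawing strategy, and then handles the mis\`ere and escape games by reducing them to the normal game on modified graphs; you instead apply K\"onig's lemma directly to the locally finite tree of plays consistent with a winning strategy, and for Escaper use compactness of the strategy space in the product topology. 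Both arguments are standard and valid (the paper itself attributes the fact to K\"onig), and your shifted initial conditions ($P_0=p_0$, $\tN_0=p_0$) are harmless since, as you note, they still lie on the correct side of every fixed point of the corresponding map.
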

Note for instance that $D>0$ if and only if $F^2$ has
multiple fixed points in $[0,1]$.
%We will see that the
%probabilities of other game outcomes are given by similar
%formulae; for example $N=\min\fp(F^2)$.

Next we examine how the three games are related to each other.
It turns out that several
inequalities hold. Some are obvious, others more surprising.
In the following, $a,b\leq c$ means that $a\leq c$ and
$b\leq c$.
\begin{thm}[Inequalities]\label{ineq}
For a Galton-Watson process with any fixed offspring
distribution, we have:
\begin{enumerate}[\rm (i)]
\item $\n,\nm\leq \si$; \quad $\p,\pm\leq \sii$;
\item $\sii\leq \si$; \quad $\pm\leq\nm$;
\item $\pm \leq \p,\n$; \quad $\d\leq \dm$.
\end{enumerate}
Besides these inequalities and those implied by them, no
other inequalities between pairs of the $10$ outcome
probabilities hold in general.
\end{thm}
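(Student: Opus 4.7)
The plan is to establish the inequalities in (i)--(iii) in order, and then to show sharpness by exhibiting explicit offspring distributions witnessing the failure of every non-implied pairwise comparison. The inequalities in (i) and $\sii \leq \si$ in (ii) follow by straightforward strategy transfer. Any normal- or mis\`ere-winning strategy for Next (respectively Previous) must force the game to terminate, since winning in either game requires some player to be stuck; but Stopper, moving first (respectively second), wins precisely when the game terminates and so can employ the same strategy. For $\sii \leq \si$, Stopper moving first simply makes any opening move into a subtree (an iid copy of the Galton-Watson tree) and then invokes the Stopper-second strategy, yielding $\si \geq p_0 + (1-p_0)\sii \geq \sii$.

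For $\pm \leq \nm$ in (ii), I would use \cref{main}(ii). Since $H$ is decreasing and $H^2$ is increasing, $H$ reverses the order on $\fp(H^2)$, and in particular it sends $\min \fp(H^2)=\nm$ to $\max \fp(H^2)=1-\pm$. It therefore suffices to show that $H(\nm)+\nm\geq 1$. In fact I prove the stronger statement that $f(x):=x+H(x)=x+1+p_0-G(x)\geq 1$ for every $x\in[0,1]$: the function $f$ is concave since $f''=-G''\leq 0$ by convexity of the generating function $G$, and $f(0)=1$, $f(1)=1+p_0\geq 1$, so concavity places $f$ above the chord $1+p_0x\geq 1$.

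The inequalities in (iii) are more delicate: they compare fixed points of $F^2$ and $H^2$, and despite $H\geq F$ the iterates are not pointwise comparable. Writing $a=\n$, $d=1-\p$, $c=\nm$, $b=1-\pm$, the pairs $(a,d)$ and $(c,b)$ are 2-cycles of $F$ and of $H$ respectively (possibly degenerating to fixed points), and the fixed-point equations yield the algebraic identities $G(a)+G(d)=2-(a+d)$ and $G(c)+G(b)=2+2p_0-(c+b)$. My plan is to exploit these identities together with convexity of $G$ to recast $\pm\leq\p$ and $\d\leq\dm$ as comparisons of the positions and widths of the two 2-cycles, and to deduce that the 2-cycle of $H$ is wider and reaches further upward than that of $F$. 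For $\pm\leq\n$, which mixes a maximum fixed point of $H^2$ with a minimum fixed point of $F^2$, I would prove the equivalent inequality $G(\nm)+G(1-\p)\leq 1+p_0$ by combining both fixed-point systems. The main obstacle is to make this comparison work uniformly across all parameter regimes, especially the boundary cases where one of the 2-cycles collapses to a single fixed point (so that $\d=0$ or $\dm=0$) and the structure of $\fp(F^2)$ or $\fp(H^2)$ changes qualitatively.

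For the sharpness claim, for each pair of the ten outcome probabilities whose comparison is neither asserted nor transitively implied by the listed inequalities, I plan to produce an offspring distribution under which the comparison fails. Natural candidate families are distributions supported on $\{0,1\}$ (where the tree is almost surely a path and all outcomes are explicit geometric expressions), on $\{0,k\}$ for small $k\geq 2$ (where $G$ is a constant plus a monomial and the fixed-point equations are low-degree polynomials), and Poisson distributions at means chosen near the various critical thresholds mentioned in the introduction. In each example, all ten outcome probabilities are computable from explicit polynomial equations, so verifying each failing inequality is routine; the substantive labour is in assembling a small family of distributions that between them witness every non-implied comparison, which I would organise as a compact table.
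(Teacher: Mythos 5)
Your parts (i) and (ii) are fine. Part (i) matches the paper's argument (the inclusions $\vn,\vnm\subseteq\vsi$ and $\vp,\vpm\subseteq\vsii$ via strategy transfer), as does the strategy-stealing proof of $\sii\leq\si$. For $\pm\leq\nm$ you take a genuinely different route: instead of the paper's second strategy-stealing argument ($\nm\geq p_0+(1-p_0)\pm$), you use $1-\pm=H(\nm)$ (which is \cref{extras}) and the observation that $x+H(x)$ is concave with values $1$ and $1+p_0$ at the endpoints, hence $\geq 1$ on $[0,1]$. That is correct and self-contained, and arguably cleaner, though it loses the quantitative bound $\nm\geq p_0+(1-p_0)\pm$ that the stealing argument gives for free.

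The genuine gap is part (iii), which is exactly where the content of the theorem lies (the paper itself flags these as the inequalities requiring analytic work). You write down the correct fixed-point identities $G(a)+G(d)=2-(a+d)$ and $G(b)+G(c)=2+2p_0-(b+c)$ and announce a \emph{plan} to compare the two 2-cycles using convexity of $G$, but you do not carry it out, and you yourself name the obstacle (uniformity across regimes, including the degenerate cases $\d=0$ or $\dm=0$). There is real reason to doubt that a purely static comparison of the fixed-point systems closes: the paper instead proves $\pm_n\leq\p_n$ and then $\d_n\leq\dm_n$ by induction on the truncated games, and the key analytic input is the derivative bound $H'(x)\geq-1$ for $x\leq\nm$ (\cref{squarelemma}), whose proof in the single-fixed-point case is a \emph{dynamical} instability argument about the monotone iteration $H^{2n}(0)\nearrow\nm$, not a consequence of the fixed-point equations alone. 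The inequality $\pm\leq\n$ then needs a further case split on $\dm>0$ versus $\dm=0$, the latter handled by comparing the lengths of the intervals $(0,\n)$ and $(1-\pm,1)$ against the derivative ordering $H'(y)\leq F'(x)$. None of this is recovered by your outline. The same applies to the final sharpness claim: you propose candidate families but produce no actual counterexamples, whereas the paper needs specific constructions (e.g.\ $p_0=p_1=1/K$, $p_{K^3}=1-2/K$, and the expansion of binary branching at $p=1-\epsilon$ yielding $\n>\sii>\p>\nm$) that do not all lie in the families you list. As it stands, the proposal proves (i) and (ii) but leaves (iii) and the optimality statement unestablished.
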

The classification into parts (i)--(iii) in Theorem~\ref{ineq} reflects
different types of argument. The inequalities in (i) follow from simple
implications that hold on any directed acyclic graph; for example, if the
first player can force the game to end after an odd number of moves then she
can of course force it to end.  Those in (ii) come from strategy-stealing
arguments involving the (distributional) homogeneity of the Galton-Watson
tree: if the first player opens with a random move then the resulting
position has the same law as before.  The inequalities in (iii) are proved by analytic methods, and we lack intuitive explanations for them.  The last
inequality is perhaps the most striking: draws are at
least as likely in the mis\`ere game as in the normal game.

Now we describe some examples of phase transitions that arise as the offspring distribution is varied.

%
%\raggedbottom
%\pagebreak
\begin{samepage}
\begin{prop}[Examples]
\label{examples}\ 
\begin{enumerate}[\rm (i)]
  \item \emph{Binary branching.}  Let
      $(p_0,p_1,p_2)=(1-t,0,t)$ for $t\in[0,1]$, and note
      that the probability of survival is positive if and
      only if $t>1/2$.  The normal game draw probability
      $D$ has a phase transition at
      $t_n:=\sqrt3/2=0.866\ldots$, in the sense that $D>0$ if and only if
      $t>t_n$.  The transition is \emph{continuous}:
      $D\to 0$ as $t\downarrow t_n$.  Similarly, the
      mis\`ere draw probability $\tD$ has a continuous
      phase transition at $t_m:=3/4$.  In contrast, the
      escape game has a \emph{discontinuous} phase
      transition at $t_e:=3/2^{5/3}=0.945\ldots$: $\ei$
      is positive if and only if $\eii$ is positive, which
      happens if and only if $t\geq t_e$.  In fact
      $\ei=2^{4/3}/3=0.840\ldots$ at $t=t_e$.
  \item \emph{Poisson offspring.}  Let the offspring distribution be
      Poisson with mean $\lambda$, and note that the survival probability
      is positive if and only if $\lambda>1$.  The normal and mis\`ere
      games have continuous phase transitions at $\lambda_n=e$ and
      $\lambda_m=2.103\ldots$ respectively (where the latter is the solution of $\lambda=e^{\lambda(1-e^{-\lambda})}$): the draw probability is positive if
      and only if $\lambda$ exceeds the respective threshold.  The escape
      game has a discontinuous phase transition at
      $\lambda_e=3.319\ldots$.
  \item \emph{Geometric offspring.}  Let $p_i=(1-\alpha)
      \alpha^i$ for $i\geq 0$.  The draw and escape
      probabilities $D,\tD,\ei,\eii$ are zero for all
      $\alpha \in(0,1)$.
\end{enumerate}
\end{prop}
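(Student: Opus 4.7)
All three parts can be attacked uniformly via Theorem~\ref{main}, which reduces each outcome probability to a fixed-point question for one of the maps $F^2$, $H^2$, $F\circ H$, $H\circ F$ on $[0,1]$. For the normal and mis\`ere games in the binary and Poisson cases, I expect the transition to arise from a period-doubling bifurcation of $F$ (respectively $H$): at the critical parameter the unique fixed point $x^*$ of $F$ in $[0,1]$ satisfies $F'(x^*)=-1$, and just above the critical parameter $F^2$ sprouts two new fixed points symmetrically about $x^*$ that open up the interval $[\min\fp(F^2),\max\fp(F^2)]$ whose length is $D$. Continuity of the transition is then immediate, since the new fixed points emerge continuously from $x^*$. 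For binary branching with $F(x)=t(1-x^2)$ and $H(x)=1-tx^2$, the conditions $F(x^*)=x^*$ and $F'(x^*)=-1$ form a pair of polynomial equations whose closed-form solution is $t_n=\sqrt 3/2$, and the analogous pair for $H$ gives $t_m=3/4$. For Poisson with $F(x)=1-e^{\lambda(x-1)}$ and $H(x)=F(x)+e^{-\lambda}$, the normal case admits the closed form $\lambda_n=e$, while the mis\`ere case reduces to a transcendental equation with numerical root $\lambda_m\approx 2.103$.

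For the escape game I expect instead a tangent (saddle-node) bifurcation of $F\circ H$: at the critical parameter a new fixed point appears in $(0,1]$, characterised simultaneously by $(F\circ H)(x)=x$ and $(F\circ H)'(x)=1$. Because this fixed point is born at a strictly positive value, $\ei$ jumps discontinuously from $0$. For binary branching one computes $F\circ H(x)=2t^2x^2-t^3x^4$, and solving the two conditions simultaneously yields $t_e=3/2^{5/3}$ and, at the transition, $\ei=2^{4/3}/3$. The Poisson escape threshold $\lambda_e\approx 3.319$ is obtained numerically from the analogous system.

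The geometric case is qualitatively different and exploits the fact that $G(x)=(1-\alpha)/(1-\alpha x)$ is a M\"obius transformation; consequently $F$, $H$, and all of their mutual compositions are also M\"obius, each with at most two fixed points on $\R\cup\{\infty\}$. Since iterates of a M\"obius map share its fixed points, $F^2$ and $H^2$ contribute no new fixed points in $[0,1]$, giving $D=\tD=0$. For the escape game I would compute $F\circ H$ and $H\circ F$ explicitly as rational functions and check that the second M\"obius fixed point (beyond the trivial $0$ or $1$ forced by the boundary values) lies strictly above $1$ for every $\alpha\in(0,1)$, whence $\ei=\eii=0$.

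The principal obstacle, I anticipate, is verifying that the candidate bifurcation points really are the thresholds, i.e.\ that no additional fixed points of $F^2$, $H^2$, or $F\circ H$ lie in $[0,1]$ just below them. For the normal and mis\`ere games this should reduce to a convexity or monotonicity check showing that $F^2$ (respectively $H^2$) lies on one side of the diagonal away from $x^*$ before the period doubling; for the escape game one must additionally rule out any earlier tangent bifurcation. The transcendental nature of the Poisson thresholds also means that $\lambda_m$ and $\lambda_e$ must be handled numerically rather than in closed form.
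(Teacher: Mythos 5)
Your proposal follows the same overall route as the paper: everything is funnelled through Theorem~\ref{main} and becomes a fixed-point count for $F^2$, $H^2$ and the compositions of $F$ and $H$ with the explicit generating functions. The thresholds you extract from the bifurcation equations ($F(x^*)=x^*$ and $F'(x^*)=-1$ for the draw transitions; tangency of $F\circ H$ with the diagonal for the escape transition) all check out, and for the Poisson case this is essentially verbatim what the paper does (the mis\`ere threshold comes out as $\log\lambda+\lambda e^{-\lambda}=1$). Where you differ: for binary branching the paper does not solve the bifurcation system but instead factorizes $F^2(x)-x=(F(x)-x)\cdot q(x)$ with $q$ an explicit quadratic (and similarly for $H^2$, and $H\circ F(x)-x=(1-x)[1-t^3(1-x)(1+x)^2]$ for the escape game), so that the entire root structure --- including the absence of extra fixed points below threshold, which you correctly single out as the principal remaining obstacle --- is read off from a discriminant or a one-variable maximum. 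That factorization is the cheapest way to discharge the obstacle and I would recommend it over a generic convexity check. For the geometric case your M\"obius argument is genuinely different from (and arguably cleaner than) the paper's one-line claim that $F^2(x)-x$, $H^2(x)-x$, $H\circ F(x)-x$ are strictly decreasing: since $F^2$ is itself a M\"obius map it has at most two fixed points in $\widehat{\mathbb C}$ unless it is the identity, while a fixed point $y\in[0,1]$ of $F^2$ other than $x^*$ would come with a distinct partner $F(y)$, giving three; you should just add the check that $F^2\neq\mathrm{id}$ (e.g.\ $F(F(0))=\alpha/(1+\alpha)\neq 0$), and for the escape game you do still need the explicit computation you describe, since $F\circ H$ genuinely has a second M\"obius fixed point and one must verify it lies outside $(0,1]$ for every $\alpha$.
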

\end{samepage}

\begin{figure}
\begin{center}
\includegraphics[width=0.49\textwidth]{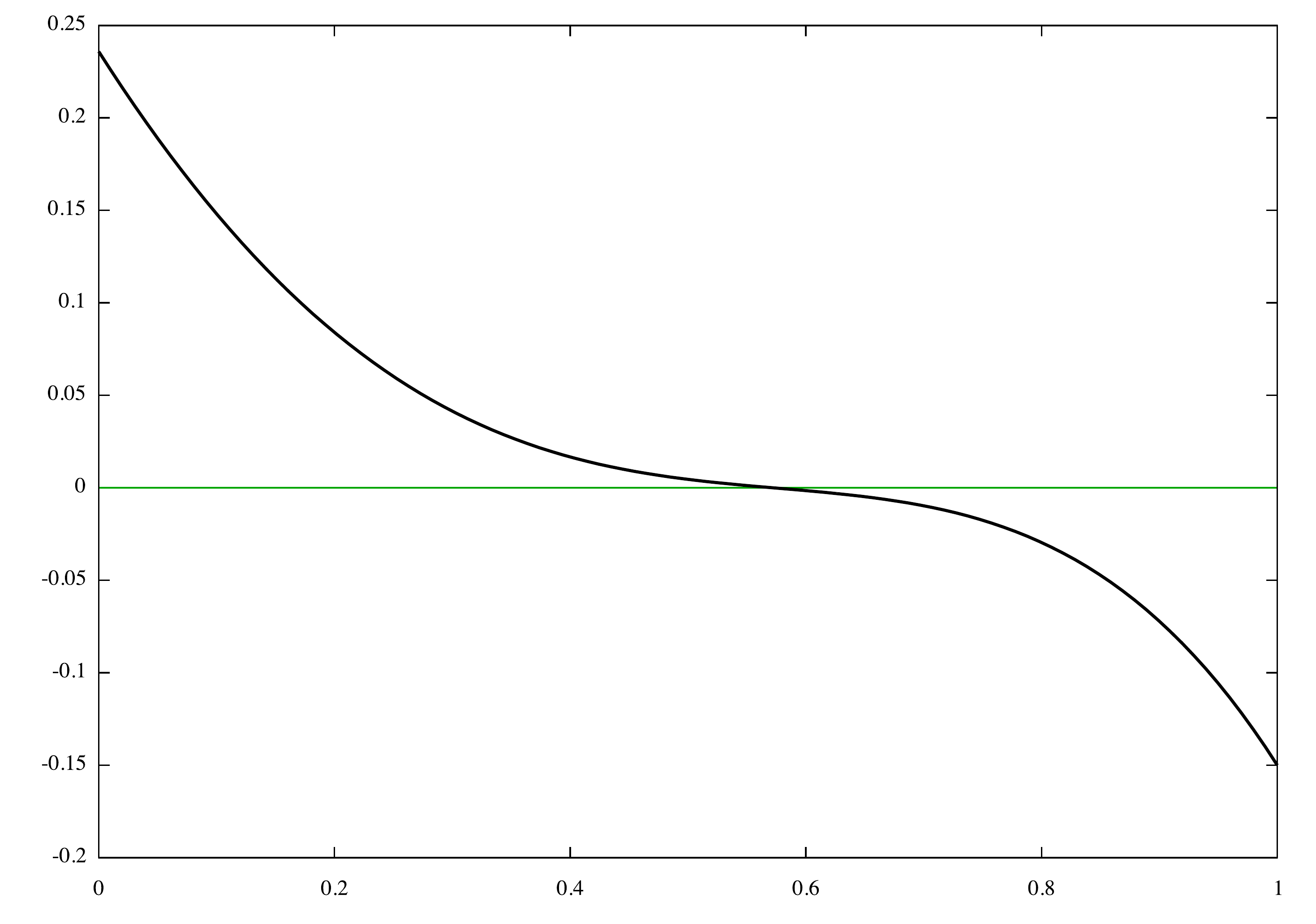}
\includegraphics[width=0.49\textwidth]{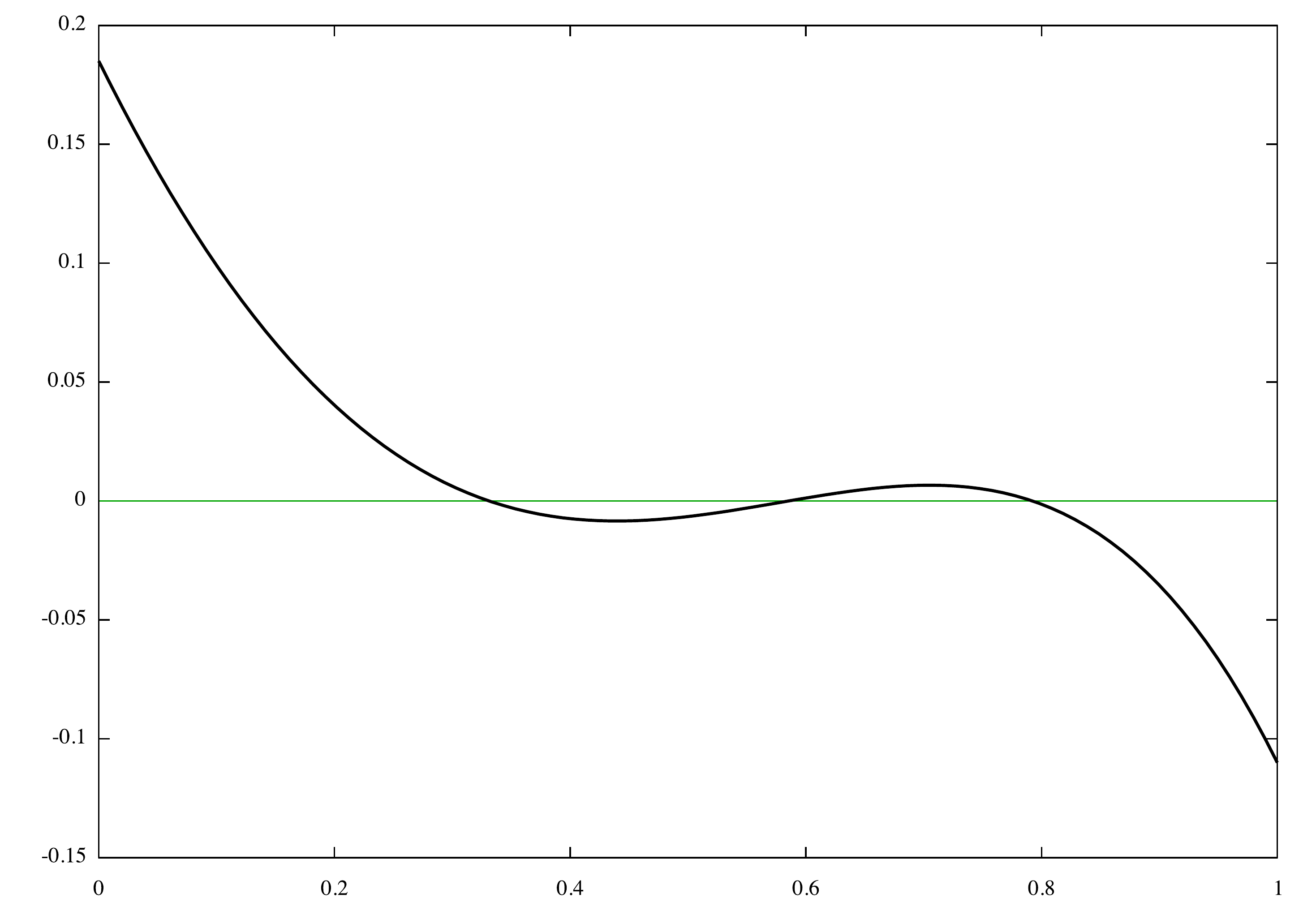}
\end{center}\label{binarynormalfigure}
\caption{
The function $F^2(x)-x$ for the binary branching distribution of \cref{examples}(i). The roots of this function are the
elements of $\fp(F^2)$.
On the left, $p=0.85$ (sub-critical for the normal game) -- the function has a unique root and the probability of a draw
in the normal game is 0. On the left, $p=0.89$ -- the function has three roots and
the probability of a draw is the distance from the smallest to the largest root.
As $p$ passes through the critical point, the two new roots emerge continuously from
the existing root.
For a contrasting example with a discontinuous phase transition,
see Figure \ref{discontinuousnormalfigure} in Section
\ref{sec-examples}.
}
\end{figure}

Note that the draw probability $D$ is not in general monotone in the
offspring distribution: the geometric distribution in (iii) stochastically
dominates the binary branching distribution in (i) if $\alpha$ is small
enough as a function of $t$, but the former has $D=0$ while the latter has
$D>0$ (for suitable $t$).  Similar remarks apply to $\tD$, $\ei$ and $\eii$.

See Figure \ref{binarynormalfigure} for an illustration
of Theorem \ref{main}(i) in the binary branching case of
Proposition \ref{examples}(i).

Theorem~\ref{main} enables the games to be analyzed for many other offspring
distributions: the outcome probabilities are given in terms of solutions of
equations (although not always as closed-form expressions).  Another interesting case (which we do not treat in detail) is the Binomial$(n,p)$ distribution, under which $\T$ can be viewed as the percolation cluster on a regular tree.  Here the normal game has draws if and only if $p>(n+1)^{n-1}/n^n$.

We typically find that phase transitions are continuous for the normal and mis\`ere games and discontinuous for the escape game, as in the above
examples.  However, we can concoct examples with the opposite behavior, as
well as more exotic phase transitions, as follows.

%\begin{samepage}
\begin{prop}[Exotic Examples]
\label{exotic}
  For each of (i)--(iii) below there exists a
continuous family $(\bp(t):t\in(0,1))$ of offspring distributions,
of uniformly bounded support, with the given
properties.
\begin{enumerate}[\rm (i)]
\item The normal game has a
(non-trivial) discontinuous phase transition: there exists
    $t^*\in(0,1)$ such that $D=0$ for $t<t^*$ while $D\in(0,1)$ for
    $t\geq t^*$ (and $p_1(t^*)<1$).
\item The normal game has two phase transitions: there
    exist $0<t^-<t^+<1$ such that $D$ increases
    continuously from $0$ to positive values at $t^-$,
    and jumps discontinuously from one positive value to
    another at $t^+$.
\item The escape game has a continuous phase transition:
    there exists $t_e\in(0,1)$ such that $\ei=0$ for
    $t\leq t_e$ while $\ei\in(0,1)$ for $t>t_e$, and
    $\ei$ is a continuous function of $t$.
\end{enumerate}
\end{prop}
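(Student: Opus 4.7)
By \cref{main}, the four relevant outcome probabilities $D,\tD,\ei,\eii$ are determined by the extremal fixed points of the iterated maps $F^2$, $H^2$, $F\circ H$, $H\circ F$. The plan is therefore to engineer, via one-parameter families $\bp(t)$ of uniformly bounded support, specific bifurcation patterns in these iterated maps. Since $F$ and $H$ are then polynomials of bounded degree with coefficients depending continuously on $t$, each local bifurcation condition reduces to a finite set of polynomial equations and can be imposed by tuning finitely many parameters; the main global difficulty is ruling out unintended additional fixed points that would spoil the identification of $\max\fp$ or $\min\fp$, which is handled by verifying uniform strict inequalities on compact subsets of $[0,1]$ for the resulting explicit polynomials.

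For (i), the goal is to arrange a saddle-node bifurcation of $F^2_t$ at a point $x_1$ uniformly separated from the unique fixed point $\xi_t$ of $F_t$. At $t=t^*$ I impose $F^2_{t^*}(x_1)=x_1$ together with $(F^2_{t^*})'(x_1)=1$, and arrange that $F^2_t-\mathrm{id}$ changes sign in a neighborhood of $x_1$ as $t$ crosses $t^*$. Then for $t<t^*$ the graph of $F^2_t$ meets the diagonal only at $\xi_t$, giving $\fp(F^2_t)=\{\xi_t\}$ and $D=0$; at $t=t^*$ the tangent point $x_1$ is an additional fixed point, yielding $D\ge|x_1-\xi_{t^*}|>0$; and for $t>t^*$ two transversal fixed points straddle $x_1$, keeping $D$ bounded below by approximately the same value. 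A convenient realization is a mixture $\bp(t)=(1-\alpha(t))\bp_A+\alpha(t)\bp_B$ of two distributions on $\{0,1,\ldots,K\}$, one producing a nearly linear $F^2$ with a single fixed point, the other contributing a localized bump in $F^2$ at a prescribed location which the mixing parameter lowers until it touches the diagonal.

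For (ii), I combine two bifurcation mechanisms in a single continuous family. At $t=t^-$ I impose a period-doubling bifurcation of $F_t$ at $\xi_t$, crossed transversally (equivalently, $(F^2_t)'(\xi_t)$ passes through $1$ at $t^-$); this gives birth to a 2-cycle near $\xi_t$ and causes $D$ to grow continuously from $0$. At a later value $t=t^+$ I impose a saddle-node bifurcation of $F^2_t$ at a point well separated from the already-existing fixed points, exactly as in~(i); this creates an additional pair of fixed points of $F^2$ far from the old ones and produces the discontinuous jump in $D$. The two bifurcation conditions are independent local conditions on finitely many polynomial coefficients, so both can be realized simultaneously along a continuous path $\bp(t)$.

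For (iii), I exploit the fact that $0$ is always a fixed point of $F\circ H$, since $H(0)=1$ and $F(1)=0$. A direct calculation gives $(F\circ H)'(0)=G'(1)G'(0)=\mu p_1$, so $0$ is attracting when $\mu p_1<1$ and repelling when $\mu p_1>1$. I choose $\bp(t)$ of bounded support so that $\mu(t)p_1(t)$ crosses $1$ transversally at $t=t_e$; a transcritical bifurcation at $0$ then produces a nonzero fixed point $x^*(t)$ of $F_t\circ H_t$ emerging continuously from $0$ for $t>t_e$, with $x^*(t)\to 0^+$ as $t\to t_e^+$. By the global control described in the first paragraph I arrange that $x^*(t)=\max\fp(F_t\circ H_t)$ for $t$ slightly past $t_e$ and that $\fp(F_t\circ H_t)=\{0\}$ for $t\le t_e$; then $\ei=x^*(t)$ is continuous at $t_e$ with $\ei(t_e)=0$, as required. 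The main obstacle throughout is precisely this global step: all the local bifurcation behavior is forced by tuning a few derivatives, but one must still certify that no unexpected distant fixed points appear, which is where the explicit bounded-support construction pays off.
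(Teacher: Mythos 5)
Your overall strategy is the same as the paper's: use \cref{main} to reduce everything to the fixed-point structure of $F^2$ and $F\circ H$, and then realize a saddle-node bifurcation away from the unique fixed point $x^*$ of $F$ for (i), a period-doubling at $x^*$ followed by a later saddle-node for (ii), and a transcritical bifurcation at $0$ driven by $(F\circ H)'(0)=\mu p_1$ crossing $1$ for (iii). Your local computations (in particular $(F\circ H)'(0)=\mu p_1$, and the identification of $D>0$ with multiple fixed points of $F^2$) are correct and match the paper; part (iii) of your argument is essentially the paper's proof, which uses $G(x)=(\tfrac1{18}-\epsilon)+\tfrac23 x+(\tfrac5{18}+\epsilon)x^3$ together with Proposition \ref{p1muprop}.

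The gap is in (i) and (ii): the proposition is an existence statement, and the existence of a family realizing the prescribed bifurcation pattern is precisely what must be established, yet you only assert it. Your claim that ``each local bifurcation condition reduces to a finite set of polynomial equations and can be imposed by tuning finitely many parameters'' ignores that the tunable objects are not arbitrary polynomials: $G$ must have nonnegative coefficients summing to $1$, and $F^2=F\circ F$ is a \emph{composition}, not a polynomial with freely adjustable coefficients. It is not clear a priori that one can place a ``localized bump'' of $F^2$ tangent to the diagonal at a point bounded away from $x^*$ while keeping $F^2-\mathrm{id}$ single-signed elsewhere, nor that the period-doubling and saddle-node conditions in (ii) can be met along a single continuous path of genuine offspring distributions; indeed this is nontrivial enough that the paper resolves it by exhibiting explicit families, $G(x)=(1-t)+t(0.5x^2+0.5x^{10})$ for (i) and $G(x)=(1-t)+t(0.15x+0.85x^{20})$ for (ii), and verifying the root structure of $F^2(x)-x$ directly (see Figure \ref{discontinuousnormalfigure}). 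Your mixture ansatz $\bp(t)=(1-\alpha(t))\bp_A+\alpha(t)\bp_B$ is a reasonable way to look for such examples, but without specifying $\bp_A,\bp_B$ and carrying out the global check you describe, the proof of (i) and (ii) is incomplete. To repair it, either write down concrete families (as the paper does) or give an honest argument that the constrained class of generating functions is rich enough to realize the two bifurcation scenarios.
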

%\end{samepage}

Notwithstanding the above examples, the next result
establishes some general patterns concerning the nature of
phase transitions.  In particular,
for certain simple families of distributions,
phase transitions are indeed continuous
for the normal and mis\`ere games but discontinuous for the
escape game.  To make the statements precise, we need two
different metrics on offspring distributions
$\bp=(p_0,p_1,\dots)$.  Let $M_0$ be the space $\{\bp:
\sum_i p_i=1\}$ of all offspring distributions, with the
$\ell^1$ metric $d_0(\bp, \bq):=\sum_i|p_i-q_i|$. Let $M_1$
be the space $\{\bp\in M_0: \sum_i i p_i<\infty\}$ of
distributions with finite mean $\mu$, with the metric
$d_1(\bp, \bq):=\sum_i i|p_i-q_i|$.

\begin{thm}[Phase transitions] \label{cont}
Consider a Galton-Watson process with
offspring  distribution $\bp$.
\begin{enumerate}[\rm (i)]
\item The probabilities $N,P,\tN,\tP,\si,\sii$ are lower
    semicontinuous as functions of $\bp$ with respect to
    $d_0$.  Hence, $D$ is upper semicontinuous, and $N$ and $P$ are continuous on $\{\bp:D=0\}$; and
    similarly for the mis\`ere game.
\item The probabilities $D$ and $\tD$ are continuous with respect to
    $d_0$ on the set of distributions $\bp$ supported on $\{0,1,2\}$ and satisfying $0<p_0<1$.
\item The set $\{\bp: \ei>0\}$ contains $\{\bp\in M_1: \mu p_1>1\}$ and
    is closed with respect to $d_1$ in $\{\bp\in M_1: \mu p_1<1\}$. We
    have $\ei>0$ if and only if $\eii>0$.
\end{enumerate}
\end{thm}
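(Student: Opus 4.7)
My plan treats the three parts separately.

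\emph{Part (i).} I realize each of $N, P, \tN, \tP, \si, \sii$ as a monotone pointwise limit of continuous approximants in $\bp$. Since $F^2$ is an increasing function on $[0,1]$ (being the composition of two decreasing functions), the iterates $F^{2k}(0)$ increase to $\min \fp(F^2) = N$ and $F^{2k}(1)$ decrease to $\max \fp(F^2) = 1-P$, the identifications coming from \cref{main}(i). Since $F$ depends continuously on $\bp$ uniformly on $[0,1]$ with respect to $d_0$---the bound $|F_\bp(x) - F_\bq(x)| \leq d_0(\bp,\bq)$ is immediate---each iterate is a continuous function of $\bp$, making $N$ (a supremum of continuous functions) and $P$ (one minus an infimum of continuous functions, equivalently a supremum) lower semicontinuous. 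The same construction yields LSC for $\tN, \tP$ (via $H$) and for $\si, \sii$ (via $H \circ F$ and $F \circ H$). Hence $D = 1-N-P$ is upper semicontinuous. On $\{D = 0\}$, where $N+P = 1$, LSC of both $N, P$ combined with the bound $N(\bq) + P(\bq) \leq 1$ gives
\[
\limsup_{\bq \to \bp} N(\bq) \leq 1 - \liminf_{\bq \to \bp} P(\bq) \leq 1 - P(\bp) = N(\bp),
\]
forcing continuity of $N$ at $\bp$; similarly for $P$, and analogously for the mis\`ere case.

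\emph{Part (ii).} For $\bp$ supported on $\{0,1,2\}$, $F$ is a quadratic polynomial in $x$, so $F^2(x) - x$ is quartic and factors as $(F(x) - x) Q_\bp(x)$, where $Q_\bp$ is a quadratic polynomial whose coefficients depend continuously on $\bp$. The factor $F(x) - x$ has a unique root $\alpha \in (0, 1)$ throughout $\{0 < p_0 < 1\}$. The roots $\beta_1 \leq \beta_2$ of $Q_\bp$ are the period-two points of $F$; a short check using the monotonicity of $F$ shows that when they are real they satisfy $0 < \beta_1 < \alpha < \beta_2 < 1$ (indeed, $\beta_1 \in [0,1]$ implies $\beta_2 = F(\beta_1) \in [0, 1-p_0]$, while either $\beta_1 = 0$ or $\beta_2 = 1$ forces $p_0 = 0$). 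At the bifurcation $\mathrm{disc}(Q_\bp) = 0$, the two period-two points coalesce with $\alpha$. Consequently
\[
D(\bp) =
\begin{cases}
\sqrt{\mathrm{disc}(Q_\bp)} \,\big/\, |\mathrm{lead}(Q_\bp)|, & \mathrm{disc}(Q_\bp) \geq 0,\\
0, & \mathrm{disc}(Q_\bp) < 0,
\end{cases}
\]
is continuous on $\{0 < p_0 < 1\}$, the two branches matching continuously at $\mathrm{disc}(Q_\bp) = 0$. The argument for $\tD$ uses $H$ in place of $F$ verbatim.

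\emph{Part (iii).} The claim splits into three. For $\mu p_1 > 1 \Rightarrow \ei > 0$: direct calculation gives $(F \circ H)(0) = F(1) = 0$ and $(F \circ H)'(0) = G'(1) \cdot G'(0) = \mu p_1 > 1$, so $F \circ H$ lies strictly above the diagonal for small $x > 0$. Since $(F \circ H)(1) = F(p_0) = 1 - G(p_0) < 1$ when $p_0 > 0$, the intermediate value theorem supplies a fixed point in $(0, 1)$, hence $\ei = \max \fp(F \circ H) > 0$ by \cref{main}(iii) (the case $p_0 = 0$ gives $\ei = 1$ since the tree is almost surely infinite). For closedness of $\{\ei > 0\}$ in $\{\mu p_1 < 1\}$ with respect to $d_1$: if $\bp_n \to \bp$ in $d_1$ with $\ei(\bp_n) > 0$, $\mu_n p_{1,n} < 1$, and $\mu p_1 < 1$, then $d_1$-convergence yields uniform convergence of both $G_{\bp_n}$ and $G'_{\bp_n}$ on $[0, 1]$, hence $C^1$-convergence $f_n := F_{\bp_n} \circ H_{\bp_n} \to f := F_\bp \circ H_\bp$. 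Pick a subsequential limit $x^*$ of $x_n := \ei(\bp_n)$; then $f(x^*) = x^*$. If $x^* = 0$, then $f'(0) = \mu p_1 < 1$ gives $\delta > 0$ with $f(x) < x$ on $(0, \delta]$, and $C^1$-convergence forces $f_n(x) < x$ on $(0, \delta]$ for large $n$, so $f_n$ has no fixed point there---contradicting $x_n \to 0$ since $x_n$ is a fixed point of $f_n$. Hence $x^* > 0$ and $\ei(\bp) \geq x^* > 0$. Finally, $\ei > 0 \Leftrightarrow \eii > 0$ follows from the bijection $x \mapsto H(x)$ (with inverse $y \mapsto F(y)$) between positive fixed points of $F \circ H$ and sub-$1$ fixed points of $H \circ F$: if $F(H(x)) = x$ with $x > 0$ and $p_0 < 1$, then $y := H(x) < 1$ satisfies $H(F(y)) = y$, and conversely.

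\emph{Main obstacle.} The technically most delicate step is part (ii), which requires careful tracking of real roots of $Q_\bp$ through the bifurcation $\mathrm{disc}(Q_\bp) = 0$ and verifying that such roots cannot escape the interval $(0, 1)$ as $\bp$ varies within $\{0 < p_0 < 1\}$---something that does not follow from generic root-continuity of polynomials alone. Part (iii)(b) also hinges essentially on the strengthening to the $d_1$-metric, which provides the $C^1$-convergence of $F \circ H$ needed to exclude the trivial limit $x^* = 0$; $d_0$-convergence alone would not suffice.
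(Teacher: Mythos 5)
Parts (i) and (iii) are correct and essentially coincide with the paper's arguments: (i) is the same monotone-limit-of-continuous-approximants argument (the paper's proof even contains the same identification $N=\lim F^{2n}(0)$, $1-P=\lim F^{2n}(1)$); in (iii) your derivative computation $(F\circ H)'(0)=\mu p_1$ is the paper's Proposition~\ref{p1muprop}, your sequential-closedness argument is the contrapositive of the paper's Proposition~\ref{escapeptthm} (both hinge on $d_1$ giving uniform control of $G'$, hence of the derivative of $F\circ H$ near $0$), and your fixed-point bijection $x\mapsto H(x)$ is the content of Corollary~\ref{extras}.

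Part (ii) has a genuine gap, and it is exactly the one you flag at the end. Your closed-form expression for $D$ is valid only if, whenever $Q_\bp$ has real roots, those roots lie in $(0,1)$. The parenthetical ``short check'' only proves the conditional statement ``if one root lies in $[0,1]$ then both lie in $(0,1)$''; it does not exclude the possibility that $Q_\bp$ has a real pair lying entirely outside $[0,1]$, in which case $D=0$ but your formula returns $\sqrt{\mathrm{disc}(Q_\bp)}/|\mathrm{lead}(Q_\bp)|>0$ and the whole continuity argument collapses. The missing claim is in fact true for support $\{0,1,2\}$, but it needs a computation: one has $Q_\bp(0)=F^2(0)/F(0)>0$ and $Q_\bp(1)=(F^2(1)-1)/(F(1)-1)=p_0>0$, and writing $Q_\bp\propto x^2-sx+q$ one finds $s=1+p_0/p_2$, while $\mathrm{disc}(Q_\bp)\geq 0$ forces $p_2>p_0+2\sqrt{p_0}>p_0$, so the vertex $s/2$ lies in $(0,1)$; together these place both real roots in $(0,1)$. (You should also handle $p_2=0$, where $\mathrm{lead}(Q_\bp)=p_2^2$ vanishes and the displayed formula is $0/0$; there $F$ is affine and $D=0$ trivially, and the discriminant is strictly negative for all small $p_2>0$ when $p_0>0$, so continuity survives.) The paper avoids this entire issue by arguing by contradiction with root counting: a discontinuity of $D$ would require either a root of the quartic $F^2(x)-x$ to enter $[0,1]$ through an endpoint (impossible since $F^2(0)\neq 0$, $F^2(1)\neq 1$) or a conjugate pair to become real at a repeated root in $[0,1]$ away from $x^*$; by the pairing lemma (Lemma~\ref{roots-in-pairs}) such a repeated root forces a second one, giving at least five roots of a quartic. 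Your route is more explicit but, until the escape-from-$(0,1)$ issue is settled by the computation above, it is not a complete proof.
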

Part (iii) above deserves some explanation.  The condition
$\mu p_1>1$ corresponds to a particularly simple
explanation for an Escaper win: there is a supercritical
branching process on which Escaper can always leave
Stopper with exactly one legal move.
(See Proposition \ref{p1muprop} and its proof in Section
\ref{sec:continuity} for more details.)
The result says
essentially that a continuous transition between $\ei>0$
and $\ei=0$ can occur only where the above criterion is the
sole explanation for escapes, i.e.\ when the transition
occurs as a result of crossing the boundary of the region
$\mu p_1>1$. Elsewhere, the escape region is closed and
thus includes its critical surface.

It would be desirable to find more general conditions under which the conclusion of part (ii) holds (although \cref{exotic} shows that it does not hold in full generality).  What is the largest $k$ for which it holds for all distributions with support $\{0,\ldots,k\}$?  Can it be established for some broader class of ``reasonable distributions"?

%Our final result establishes connections between phase transitions, the
%topology of the region of distributions giving positive draw probability,
%and quantities related to the length of the game.

Finally, we investigate further the topology of the
region of distributions giving positive draw
probability, and the nature of the phase
transitions which can occur, by considering quantities
related to the length of the game.

Consider the normal or mis{\`e}re game. We
define the \df{length} of the game with optimal play, denoted by $T$, as follows.
Suppose that the game is a win for one of the players.
Then $T$ is the number of turns in the game (i.e.\ the distance from the root to the
leaf where the game ends) if the winning player tries to win
the game as quickly as possible while the losing player tries to
prolong it as much as possible. Equivalently, $T$ is
the smallest $n$ such that some player has a strategy that
ensures a win in $n$ turns or fewer.
(From a simple compactness result, Proposition \ref{compact} below,
such an $n$ exists if the game is not a draw.)
If the game is a draw with optimal play, define $T=\infty$.

Next, say that a path from the root to a vertex $v$ is
a \textbf{forcing path} if each player has a strategy
that guarantees that either they do not lose, or that
the game passes through $v$. Let $T^*$ be the supremum of the lengths of all forcing paths.
If the game is a draw,
then trivially the path to any vertex $v$ is forcing, since
both players have strategies that guarantee not to lose,
and $T^*=\infty$. On the other hand if one player has a
winning strategy, then $T^*$ is finite, and
we have the following interpretation:
although the other player is destined to lose eventually,
they can control the path of the game for the first $T^*$
turns, unless the opponent is willing to give up the win.
Note that $T^*\leq T$.

%We define another related quantity, for games which are a win for one of the players. Consider strategies for the losing player. Of course, she has no winning strategy!
%However, for some vertices $v$, she has a strategy which
%guarantees that either she wins, or the game passes through vertex $v$. (For example, trivially this is true of the root vertex.)
%Let $T^*$
%be the largest $n$ such that there exists such a vertex at level $n$.
%We can interpret that although the player is destined to lose the game,
%she can however control the path of the game for the first $T^*$ turns
%(unless the opponent is willing to give up the win).
%(For the sake of definiteness, we set $T^*=\infty$ when
%the game has positive probability of a draw.)
%Note that $T^*\leq T$.

\begin{thm}[Length of the game]\label{thm:length}
Consider the normal or mis{\`e}re game on a Galton-Watson tree,
with offspring distribution $\bp$.
Write $\cB$ for the set of offspring distributions such
that the probability of a draw is $0$, and $\partial\cB$ for its boundary in $M_0$.
\begin{enumerate}[\rm (i)]
\item If $\bp$ is in the interior of $\cB$, then $\E T<\infty$ and $\E T^*<\infty$.
\item If $\bp\in\partial\cB\intersect\cB$, then $\E T=\infty$ and $\E T^*=\infty$.
\item
Along any sequence of offspring distributions
in $\cB$ converging in $M_0$ to a distribution
in $\partial\cB\intersect\cB$, we have $\E T\to\infty$ and $\E T^*\to\infty$.
\item
Along any sequence of offspring distributions
in $\cB$ converging in $M_0$ to a distribution
in $\partial\cB\setminus\cB$, we have $\E T\to\infty$.
\end{enumerate}
\end{thm}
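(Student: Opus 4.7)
The plan builds on two tools: the iteration of $F$ for finite-horizon game values, and a branching process capturing the forcing-path structure. Write $p_n$ (resp.\ $q_n$) for the probability that the player to move in the normal game wins (resp.\ loses) within $n$ further moves. One-step conditioning gives $p_{n+1}=F(1-q_n)$ and $q_{n+1}=G(p_n)$ with $(p_0,q_0)=(0,\pi_0)$, so $p_n=F^{2\lceil n/2\rceil}(0)$, $1-q_n=F^{2\lfloor n/2\rfloor+1}(0)$, and
\[
\P(T>n)=F^{2\lfloor n/2\rfloor+1}(0)-F^{2\lceil n/2\rceil}(0).
\]
For $T^*$, using $D=0$ one checks that a forcing path $o=v_0,\dots,v_m$ in a won game is characterised by every $A$-turn vertex $v_{2j}$ strictly before $v_m$ having a unique $P$-child (namely $v_{2j+1}$) with all its other children of type $N$; call such a vertex \emph{good}. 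Then $T^*\ge 2k$ iff the grandchild process on good vertices survives to generation $k$, a Galton--Watson branching process whose mean offspring number---computed from the size-biased distribution at the unique $P$-child---is $G'(N)^2=(F^2)'(N)$. The mis\`ere case is parallel with $H$ in place of $F$.

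For part (i), an intermediate-value argument forces $(F^2)'(N)\le 1$ whenever $D=0$, else a second fixed point would appear in $(0,N)$. If equality held at an interior $\bp\in\cB$, one can produce $\tilde\bp$ close to $\bp$ (e.g.\ by transferring a small mass between $\pi_0$ and a suitable $\pi_k$) for which $F^2_{\tilde\bp}(x)-x$ dips strictly below zero on a short interval just below $N$ while remaining nonnegative at $0$; this creates a new fixed point and places $\bp$ on $\partial\cB$, a contradiction. Hence in the interior $(F^2)'(N)<1$ strictly, making $F^2$ a local contraction at $N$; iterates converge geometrically, $\P(T>n)$ decays exponentially, and $\E T<\infty$. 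The good-vertex BP is subcritical, so $\E T^*<\infty$ (also immediate from $T^*\le T$).

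For part (ii), the same dichotomy forces $(F^2)'(N)=1$ on $\partial\cB\cap\cB$, so $N$ is a tangent fixed point of $F^2$. The sign constraints on $F^2-\mathrm{id}$ in $\cB$ force an odd-order tangency, and a Taylor analysis yields $N-F^{2k}(0)$ of polynomial order $k^{-1/(\alpha-1)}$ (with $\alpha\ge 3$ the tangency order), whence $\sum_k(F^{2k+1}(0)-F^{2k}(0))=\infty$ and $\E T=\infty$. The good-vertex BP is now critical and, under mild non-degeneracy of $\bp$, of positive finite variance; by Kolmogorov's theorem it survives to generation $k$ with probability $\asymp 1/k$, so its extinction depth has infinite mean and $\E T^*\ge 2\,\E[\text{depth}]=\infty$.

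For parts (iii) and (iv), each $\P(T>n)$ is continuous in $\bp$ with respect to $d_0$ because $F$ is Lipschitz-$1$ in $\bp$ uniformly on $[0,1]$. On $\cB$, where $N$ is continuous (\cref{cont}(i)) and $N<1$ (since $D=0$ forces $\pi_0>0$), uniform bounds on $kx^{k-1}$ on compact subsets of $[0,1)$ give continuity of $G'(N)$, and hence of $\P(T^*\ge n)$, in $d_0$. Therefore $\E T=\sum_n\P(T>n)$ and $\E T^*=\sum_n\P(T^*\ge n)$ are lower semicontinuous on $\cB$, and Fatou's lemma concludes: for (iii), the limit in $\partial\cB\cap\cB$ has $\E T=\E T^*=\infty$ by part (ii); for (iv), the limit in $\partial\cB\setminus\cB$ has $D>0$ so that $\P(T=\infty)>0$ and $\E T=\infty$. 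The main technical obstacle is the perturbation argument identifying $(F^2)'(N)=1$ as the signature of $\partial\cB$ within $\cB$: one must construct the perturbation inside the probability simplex while simultaneously controlling both the value and slope of $F^2-\mathrm{id}$ near $N$, and dispatch degenerate edge cases (higher-order tangency, very small support, or $N$ close to $1$).
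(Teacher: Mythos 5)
Your proposal follows essentially the same route as the paper's proof: the fixed-point criterion $(F^2)'(x^*)\le 1$ on $\cB$, strict in the interior and an equality on $\partial\cB\intersect\cB$; geometric convergence of the iterates $F^{2k}(0)$ and $F^{2k}(1)$ for part (i); the reduced two-type branching process of forcing paths, with grandchild mean $G'(N)^2=1$ at the boundary, for part (ii); and term-by-term continuity of $\P(T>n)$ plus a Fatou/monotone-truncation argument for parts (iii) and (iv). However, there is one genuine gap, which you flag yourself but do not close: the perturbation step showing that $(F^2)'(x^*)=1$ forces $\bp\notin\cB^o$. Your suggestion of ``transferring a small mass between $p_0$ and a suitable $p_k$'' moves the fixed point as well as the slope, and it is not clear that the sign of $F^2_{\tilde\bp}(x)-x$ near the new fixed point comes out as you want: a cubic tangency can just as well resolve into one real root and a complex pair, leaving $D=0$. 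The paper's resolution (Lemmas \ref{lemma:usefulp} and \ref{lemma:Dcriteria}) is to first show $x^*>1/2$ at a critical point (strict convexity of $G$ together with $G'(x^*)=1$), then mix $\bp$ with a distribution of the form $p_1=1-q$, $p_k=q$ whose generating function passes through the \emph{same} point $(x^*,1-x^*)$ but with slope greater than $1$; this keeps $x^*$ fixed while pushing $F'(x^*)$ strictly below $-1$, which by the part-(a) criterion forces $D>0$ for the perturbed distribution. Some construction of this kind is required; without it part (i) is incomplete.

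Two smaller points. For the critical reduced tree you invoke Kolmogorov's estimate $\P(\text{survival to generation }k)\asymp 1/k$, which requires positive finite offspring variance; finiteness is in fact automatic here because the reduced offspring law is weighted by $N^k$ with $N<1$, and the zero-variance case is degenerate, but the paper sidesteps all of this with an elementary lemma (Lemma \ref{lemma:GWheight}) showing that \emph{any} mean-one Galton-Watson process has infinite expected height. Also, your tangency-order Taylor analysis for $\E T=\infty$ in part (ii) is superfluous, since $T\ge T^*$ already gives it, and it would anyway require ruling out infinite-order tangency (which does follow from analyticity of $G$ at $x^*<1$, but needs to be said).
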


The set $\partial\cB\intersect\cB$ is the set of distributions in
the boundary of $\cB$ which have draw probability $0$;
hence we may interpret $\partial\cB\intersect\cB$ as the
set of ``continuous phase transition" points, and similarly
the set $\partial\cB\setminus\cB$ as the set of ``discontinuous
phase transition" points.

In parts (iii) and (iv) of Theorem \ref{thm:length},
we see that $\E T$ blows up as
we approach the boundary of $\cB$, and that $\E T^*$ blows up if
we approach a continuous phase transition point.
It would be convenient to complete the result with the statement
that $\E T^*$ does \textit{not} blow up at a
discontinuous phase transition point.
However such a statement is not true without further qualification.
For the case of the normal game, let $x^*$ be the unique fixed point in $[0,1]$ of the function $F$.
During the proof of Theorem \ref{thm:length}, we show that
$\E T^*\to\infty$ precisely if at the limit point, $F'(x^*)=-1$.
At a continuous phase transition point (where new fixed points
of the function $F^2$ emerge smoothly from the fixed point $x^*$),
we will show that indeed $F'(x^*)=-1$. At discontinuous phase
transition points (where new fixed points of $F^2$ are created
away from $x^*$), it is not generally the case that $F'(x^*)=-1$.
However, it can occur that $F'(x^*)=-1$; we could loosely interpret such cases by saying that a continuous phase transition is occurring,
but it is masked by a simultaneously occurring discontinuous
phase transition. (For the case of the mis\`ere game, replace
the function $F$ by the function $H$ throughout.)

Accordingly, we conjecture that the correct
completion of the result in Theorem \ref{thm:length}(iii)-(iv) is as follows: $\E T^*$ stays bounded if the limit distribution
is in $\partial\cB\setminus(\partial(\partial\cB\intersect\cB))$ (a phase transition point which is separated from the set
of continuous phase transition points),
and $\E T^*\to\infty$ if the limit distribution is
any other point in $\partial\cB$. However, we do not
have a proof of this statement.

%Observation: suppose $\bp\in\cB^C$, i.e.\ there is positive probability of a draw.
%One might wonder whether $\E(T|\text{game not drawn})<\infty$. This would be reminiscent
%of the property that a super-critical Galton-Watson process, conditioned to die out,
%looks like a sub-critical Galton-Watson process. This property fails on $\delta\cB\cap\cB^C$
%(i.e.\ at discontinuous phase-transition points), and is ``mostly true" in the interior of the draw region $\cB^C$, but there are exceptions; loosely, these correspond to points where a further phase transition occurs within the draw region. More specifically, one can show that this property holds except when $(F^2)'(N)=1$ \textbf{(WRONG, NOT $N$ HERE)} (for the normal game, or, analogously, $(H^2)'\big(\tN\big)=1$ for the mis{\`e}re game).

%\subsection*{Sums of games}
%One of the central notions in combinatorial
%game theory is that of a \textit{sum of games}.

%\subsection*{Further discussion}

It is instructive to compare the phase transitions considered here
with those involving other properties of
branching processes.
First let $\cA$ be the set of offspring distributions for
which the branching process dies out with probability $1$
(i.e.\ the probability that an infinite path exists is $0$),
and let $\bp^*$ be the
degenerate distribution with $p^*_1=1$ and $p^*_k=0$ for $k\ne 1$.
Then $\cA\union\{\bp^*\}$ is closed as a subset of $M_0$
(it is well known that $\cA$ consists precisely of
those distributions with mean less than or equal to $1$, except
for $\bp^*$). Along a sequence of distributions in $\cA$ converging
to a point in $\partial\cA$, the expected length of the longest
path in the tree goes to $\infty$, and the probability of extinction
is continuous at the boundary of $\cA$ (except at $\bp^*$).

On the other hand, consider the event that the tree of the branching process contains a complete infinite binary tree, rooted at the root of the branching process. Let $\cC$ be the set of offspring distributions
for which this event has probability $0$. Now it is possible
to show that the set $\cC\intersect M_1$ is open as a subset of $M_1$. (We do not write the proof here, but we observe that a closely related property involving the $3$-core of sparse random graphs converging
locally to a branching process is studied extensively by Janson
\cite{Janson2009}). Hence within $M_1$, the phase transitions at
the boundary of $\cC$ are discontinuous; for example, it was
shown by Dekking \cite{Dekking} that for the particular case of a Poisson$(\lambda)$ offspring distribution, the probability of existence of such a binary subtree is $0$ for $\lambda<\lambda_c\approx 3.35$,
and jumps to around $0.535$ at $\lambda_c$.

In contrast to the previous two paragraphs, we see that
for the case of the draw probability,
the set $\cB$ considered in Theorem \ref{thm:length} is neither
open nor closed. Along a sequence of distributions
converging to a distribution in $\partial\cB\intersect\cB$,
we see a continuous phase transition as in the case of
survival/extinction of a branching process. Indeed,
in the proof we show that the union of all forcing paths
is itself a (two-type) Galton-Watson process which
is a subtree of the game tree, and
which itself approaches criticality (for survival/extinction) at the phase transition point.
In this case we can explain the emergence of draws by the divergence
to infinity of the length of a forcing path available to the
losing player. On the other hand, the case of a discontinuous
phase transition is much more similar to that observed for the
set $\cC$ defined in terms of the occurence of a binary tree within a branching process; here it seems that the emergence of draws cannot
be explained in terms of a single path, but intrinsically involves
a more complicated branching structure.

\subsection*{Background and related work}

Two recent articles by the current authors together with Basu and
W\"astlund \cite{undir} and with Marcovici \cite{dir} address these games and their variants on other structured random graphs.

Specifically, \cite{dir} considers the normal game and a variant of the mis\`ere game on percolation clusters of oriented Euclidean lattices.  Using probabilistic cellular automata and hard core models, it is proved that draws occur in dimensions $d\geq 3$ and greater (on certain lattices) if the percolation parameter is large enough, but not in dimension~$2$.  Many questions remain unresolved, such as monotonicity of the draw probability in the percolation parameter (which would imply uniqueness of the phase transition for $d\geq 3$).

On the other hand, \cite{undir} is concerned with percolation on \emph{unoriented} lattices.  The normal game as defined earlier is less interesting on an undirected graph, since (unless the starting
vertex has no neighbour) either player can draw by immediately reversing every move of the other player.  We therefore consider a different extension of the rules, in which the token is forbidden to ever revisit a vertex, giving a game that we call \emph{Trap}.  (On a tree, Trap and the normal game are clearly equivalent).  For percolation on Euclidean lattices in any dimension $d\geq 2$, it is unknown whether Trap has draws for some nontrivial percolation parameter.  Simulation evidence tends to support a negative answer in $d=2$, while analogy with the directed case might suggest a positive answer for $d\geq 3$.  The article \cite{undir} uses connections with maximal matchings and bootstrap percolation to establish finite scaling results on a biased percolation model where vertices have two different occupation parameters according to their parity, thus favoring one player.

Compared with the cases discussed above, the recursive structure of Galton-Watson trees allows a considerably deeper analysis.  Two special cases of the normal game have been partially analysed before: the phase transition for the Binomial$(2,p)$ offspring was found in the PhD thesis of one of the current authors \cite{phd}.
The case of the Poisson offspring family is closely related to
the analysis of the \emph{Karp-Sipser algorithm} used to find
large matchings or independent sets of a graph,
introduced by Karp and Sipser in
\cite{KarpSipser}. For the case of
\ER random graphs $G(n,\lambda/n)$ they identified
a phase transition at $\lambda=e$
corresponding to that noted in Theorem \ref{examples} above, and
dubbed it the ``$e$-phenomenon"; the link to
games is not described explicitly but the choice of notation and
terminology makes clear that the authors were aware of it.

We mention some recent papers particularly
closely related to the current study.
In \cite{MartinStasinski}, Martin and Stasi{\'n}ski
consider minimax recursions defined on Galton-Watson trees
with no leaves, truncated at some depth $k$. Terminal
values at the level-$k$ nodes are drawn independently from
some common distribution. Such recursions give the value of a
general class of two-player combinatorial games; the behaviour
of the value associated to the root is studied as $k\to\infty$.
Johnson, Podder and Skerman \cite{JohnsonPodderSkerman}
study a wider class of recursions on supercritical Galton-Watson trees, 
with a particular focus on cases where the one-level generating-function recursion has multiple fixed points. 
Broutin, Devroye and Fraiman \cite{BroutinDevroyeFraiman}
study related questions for minimax functions and more general
recursions, in the case of Galton-Watson trees conditioned to have a 
given number of vertices.

Other work on combinatorial games in random settings includes
the study of positional games (such as Maker-Breaker games)
on random graphs, for example \cite{maker-breaker-geometric, biased,
StojakovicSzabo2005},
and \cite{frogs} which deals with matching games played on random point sets, with an intimate connection to Gale-Shapley stable marriage.  In another direction, \cite{logic} uses certain games as tools for proving statements involving second-order logic on random trees, and \cite{wastlund} uses a game in the analysis of optimization problems in a random setting.  One striking observation from all these examples is that games, by their competitive nature, often automatically tease out and magnify some of the most interesting and subtle structural properties of random systems.

%A large class of two-player combinatorial games can be
%represented by games of this form. Let $\mathcal{N}$ be the
%set of initial vertices for which the first (``next'')
%player has a winning strategy, let $\mathcal{P}$ be the set
%of initial vertices for which the second (``previous'')
%player has a winning strategy, and let
%$\mathcal{D}=V\setminus(\mathcal{N}\cup\mathcal{P})$ be the
%set of initial vertices (``draws'') for which neither
%player has a winning strategy.

\section{Recursions and compactness}

In this section we give the basic recursions underlying
analysis of the games.  First consider the normal game on
any directed acyclic graph with vertex set $V$, and let
$\vn$ be the set of vertices $v$ for which the game is a
next-player win if the token is started at $v$. Similarly
define $\vp$ and $\vd$ to be the sets of vertices from
which the game is a previous-player win and a draw
respectively (so that $(\vn,\vp,\vd)$ is a partition of
$V$). In the case of the Galton-Watson tree with offspring
distribution $\bp$ we have $N=N(\bp)=\P(o\in\vn)$, etc. Let
$v$ be a vertex and let $\Gamma=\Gamma(v)$ be its
out-neighborhood, i.e.\ the set of end-vertices of the
edges leading from $v$.  By considering the first move, it
is immediate that the following relations hold.
\begin{equation}\label{recur-inf}
\begin{aligned}
v\in\vn \quad\text{ iff }\quad& \Gamma\cap\vp\neq\emptyset;\\
v\in\vp \quad\text{ iff }\quad& \Gamma\subseteq\vn;\\
v\in\vd \quad\text{ iff }\quad& \Gamma\cap\vp=\emptyset
\text{ but }\Gamma\cap\vd\neq\emptyset.
\end{aligned}
\end{equation}

Similar relations hold for the other games. However, these
relations are not in general sufficient to determine the
sets.  For example, consider the normal game on a singly
infinite path directed towards infinity. Clearly, every
vertex belongs to $\vd$, but two other possible solutions to
\eqref{recur-inf} assign vertices alternately to $\vp$ and
$\vn$ along the path.

The following refinement will enable us to choose the
correct solutions.  For $n\geq 0$, let $\vn_n$ be the set
of starting vertices from which the Next player has a
winning strategy that guarantees a win after strictly fewer
than $n$ moves (counting the moves of both players).
Similarly, let $\vp_n$ be the set of vertices from which
the Previous player can guarantee a win in fewer than $n$
moves.  In particular we have $\vn_0=\vp_0=\emptyset$.  Let
$\vd_n=V\setminus(\vn_n\cup\vp_n)$.  This may be
interpreted as the set of starting vertices from which the
game is drawn under the convention that we declare the game a draw whenever it lasts for $n$ moves.  By
considering the first move again, we have for $n\geq0$,
\begin{equation}\label{recur}
\begin{aligned}
v\in\vn_{n+1} \quad\text{ iff }\quad& \Gamma\cap\vp_n\neq\emptyset;\\
v\in\vp_{n+1} \quad\text{ iff }\quad& \Gamma\subseteq\vn_n.
\end{aligned}
\end{equation}
(It is easy to deduce that $\vn_1=\emptyset$, while $\vn_{2k}=\vn_{2k+1}$ and $\vp_{2k-1}=\vp_{2k}$.)

Similarly, let $\vnm,\vpm,\vdm$ be the sets of starting
vertices from which the mis\`ere game is a Next player win,
a Previous player win, and a draw respectively.  Let
$\vnm_n,\vpm_n$ be the sets from which the relevant player
can guarantee to win in fewer than $n$ moves, and let
$\vdm_n=V\setminus(\vnm_n\cup\vpm_n)$. Then we have
\begin{equation}\label{recur-mis}\begin{aligned}
 v\in\vnm_{n+1} \quad\text{ iff }\quad&
\Gamma\cap\vpm_n\neq\emptyset
 \text{ or }\Gamma= \emptyset;\\
%v\in\vdm \quad\text{ iff }\quad& \Gamma\cap\vpm=\emptyset
%\text{ but }\Gamma\cap\vdm\neq\emptyset;\\
v\in\vpm_{n+1} \quad\text{ iff }\quad& \Gamma\subseteq\vnm_n
\text{ and }\Gamma\neq \emptyset.
\end{aligned}\end{equation}

For the escape game, let $\vsi,\vsii$ be the sets from
which Stopper wins, when he has the first move and the
second move respectively, and let $\vei,\veii$ the sets
where Escaper wins, when moving first and second
respectively. Let $\vsi_n,\vsii_n$ be the sets from which
Stopper can win in fewer than $n$ moves, and let
$\vei_n=V\setminus \vsii_n$ and $\veii_n=V\setminus
\vsi_n$.  (These are Escaper's winning sets if we declare
Escaper the winner after the $n$th move).  We have
\begin{equation}\label{recur-esc}\begin{aligned}
v\in\vsi_{n+1} \quad\text{ iff }\quad& \Gamma\cap\vsii_n\neq\emptyset \text{ or }\Gamma=\emptyset;\\
%v\in\veii \quad\text{ iff }\quad& \Gamma\subseteq\vei \text{ and }\Gamma\neq\emptyset;\\
v\in\vsii_{n+1} \quad\text{ iff }\quad& \Gamma\subseteq\vsi_n.
%v\in\vei \quad\text{ iff }\quad& \Gamma\cap\veii\neq\emptyset.
\end{aligned}\end{equation}

To use the above relations, we need the following simple
but important fact: if a player can win (or, in the escape
game, if Stopper can win), then they can guarantee to do
so within some finite number of moves which they can specify in advance.  This follows from
compactness arguments going back to \cite{konig}. For the reader's
convenience, we include a proof.
%Note
%that $(\vn_n)_{n \geq 0}$ and $(\vp_n)_{n \geq 0}$ are
%increasing families of sets.  (In fact,
% (and similarly for
%$\vp_n,\vnm_n,\vpm_n,\vsi_n,\vsii_n$.

\begin{prop}[Compactness]\label{compact}
Let $\G$ be a directed acyclic graph with all out-degrees
finite.  We have $\vn=\bigcup_{n=0}^{\infty} \vn_n$, and
similarly for each of $\vp,\vnm,\vpm,\vsi,\vsii$.
\end{prop}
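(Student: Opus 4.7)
The plan is to prove each identity $\vn=\bigcup_{n}\vn_n$ (and its five counterparts) by two inclusions. The inclusion $\bigcup_{n}\vn_n\subseteq\vn$ is immediate from the definitions: a strategy that forces a Next-player win within $n$ moves is a fortiori a Next-player winning strategy, so every element of $\vn_n$ lies in $\vn$. Exactly the same remark applies to $\vp,\vnm,\vpm,\vsi,\vsii$. The real content is in the reverse inclusion, which I would obtain by applying König's lemma to the ``strategy tree'' generated by a winning strategy.

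In detail, I would fix $v\in\vn$ and choose any Next-player winning strategy $\sigma$ from $v$. Form the rooted tree $T_\sigma$ whose vertices are the positions reachable from $v$ when Next plays according to $\sigma$ and Previous plays any legal move: at a Next-turn position $T_\sigma$ has the single child prescribed by $\sigma$, while at a Previous-turn position it branches into all of $\Gamma$. Finite out-degree in $\G$ makes $T_\sigma$ finitely branching. Any infinite branch of $T_\sigma$ would correspond to an infinite play consistent with $\sigma$, which by definition would be a draw rather than a Next-player win, contradicting the choice of $\sigma$. Hence $T_\sigma$ has no infinite branch, so by König's lemma it is finite. Letting $n$ be its depth, the strategy $\sigma$ forces a win within $n$ moves, and therefore $v\in\vn_{n+1}$.

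The same construction handles $\vp,\vnm,\vpm$ with only cosmetic changes, using the recursions \eqref{recur} and \eqref{recur-mis} to identify leaves of the strategy tree with the appropriate terminal positions (swap the roles of the two players, and for the mis\`ere case interpret a zero-out-degree vertex as the opposite outcome). For $\vsi$ and $\vsii$ I would form the analogous strategy tree for Stopper: here a branch terminates whenever \emph{either} player is unable to move, since in the escape game Stopper wins in both cases, and a Stopper-winning strategy admits no infinite consistent branch by the very definition of an Escaper win.

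The only place where care is needed is in justifying, for each of the six cases, that a winning strategy produces no infinite branch of its strategy tree. This is essentially immediate from the definitions of win and draw given in the introduction, but one must verify the correct ``leaf condition'' in each game so that termination of a branch genuinely coincides with a win for the relevant player. Once that bookkeeping is in place, König's lemma does the rest, and I do not anticipate any substantive obstacle beyond writing out the six near-identical cases efficiently.
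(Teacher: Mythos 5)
Your argument is correct, but it is not the route the paper takes. You prove the nontrivial inclusion $\vn\subseteq\bigcup_n\vn_n$ by fixing a winning strategy $\sigma$, forming its strategy tree, observing that finite out-degrees make it finitely branching and that an infinite branch would yield a play consistent with $\sigma$ that Next does not win, and then invoking K\H{o}nig's lemma to bound the depth; you then repeat this uniformly for all six sets. The paper instead argues by contradiction with the ``bad'' sets $\vn':=\vn\setminus\bigcup_n\vn_n$ and $\vp':=\vp\setminus\bigcup_n\vp_n$: it shows (using finite out-degrees only to take a maximum of finitely many $m(w)$) that from any vertex of $\vn'$ every move into $\vp$ lands in $\vp'$, and from any vertex of $\vp'$ some move leads back into $\vn'$, whence the supposedly losing player can cycle between $\vn'$ and $\vp'$ forever and secure at least a draw, contradicting $\vn'\subseteq\vn$; the mis\`ere and escape cases are then handled not directly but by graph surgery reducing them to the normal game (adding pendant edges at leaves, and a bipartite doubling for the escape game). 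Your approach is the classical compactness argument the paper alludes to when citing K\H{o}nig, and it has the merit of treating all six cases in one uniform way and of extracting the bound $n$ directly from the given strategy; the paper's version avoids introducing strategy trees and K\H{o}nig's lemma explicitly and stays closer to the recursions \eqref{recur}--\eqref{recur-esc}. Two small points of care for your write-up: on a general directed acyclic graph a position can be reached at both parities, so the strategy tree should be built on histories (paths from $v$) rather than on positions; and an infinite branch should be described as a play that is not a win for the relevant player (a ``draw'' is a property of a position, not of a single play). Neither affects the substance.
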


\begin{proof}
Consider first the normal game.  Let $\vn':=\vn\setminus
\bigcup_{n=0}^{\infty} \vn_n$ and $\vp':=\vp\setminus
\bigcup_{n=0}^{\infty} \vp_n$ be the sets from which the
relevant player can win, but cannot guarantee to do so
within any finite number of moves.  We must show that
$\vn'=\vp'=\emptyset$.

If $v\in \vn'$ then the out-neighbourhood $\Gamma(v)$
contains some vertex in $\vp'$ but none in
$\vp\setminus\vp'$ (otherwise the Next player could win in
finitely many moves).  If $v\in \vp'$ then all vertices of
$\Gamma(v)$ lie in $\vn$, and we claim that at least one of
them lies in $\vn'$. Indeed, if not then each
$w\in\Gamma(v)$ lies in $\vn_{m(w)}$ for some $m(w)$. But
then $M:=\max\{m(w):w\in\Gamma(v)\}$ is finite, and so
$v\in \vp_{M+1}$, a contradiction.

We now claim that from any vertex in $\vn'$, the Previous
player has a strategy that guarantees a draw or better.
Indeed, if the Next player is foolish enough to move to a
vertex in $\vn\cup\vd$ then the Previous player simply
plays to win or draw as usual. If the Next player instead
moves to a vertex in $\vp'$ then the Previous player
replies by moving again to a vertex in $\vn'$.  The same
strategy allows the Next player to draw from any vertex in
$\vp'$. Hence, there are no such vertices.

For the mis\`ere game, we can reduce to the normal game on
a modified graph: from each vertex of out-degree $0$ we add
a single outgoing edge to a new vertex of out-degree $0$.
We now appeal to the normal game case already proved.

For the escape game, we can reduce to the normal game on a
different modified graph.  Fix a starting vertex $u$ and
suppose that Stopper moves first.  First, split each vertex
$v$ into two copies $v_0$ and $v_1$ to indicate whether it
is reached after an even or odd number of moves. Let the
token start at $u_0$.  Split edge $(v,w)$ into two edges
$(v_0,w_1)$ and $(v_1,w_0)$. The resulting graph is
bipartite. Finally, for any $v$ with out-degree $0$, add an
outgoing edge from $v_0$. The case when Stopper moves
second is handled similarly, except that in the final step
we instead add the outgoing edge to $v_1$.
\end{proof}

The finite out-degree assumption in the last result is
needed.  For instance, if $\G$ is a tree consisting of
outgoing paths of every even length $2,4,6,\ldots$
emanating from a root $o$, then the Previous player wins,
but the Next player can make the game arbitrarily long.

\section{Generating functions and fixed points}

We next prove \cref{main}. From now on we specialize to the
case $\G=\T$, the Galton-Watson tree with offspring
distribution $\mathbf{p}=(p_0,p_1,\ldots)$.  Recall that we
write $N=N(\bp)=\P(o\in \vn)$, and similarly for
$P,D,\tN,\tP,\tD,\si,\sii,\ei,\eii$. Recall the sets
$\vn_n$, etc.\ defined in the previous section.  Define the
associated probabilities $N_n:=\P(o\in \vn_n)$, etc.

On a tree, these probabilities may be interpreted as follows. Let $\T_n$ be the finite
subgraph of $\T$ induced by the set of vertices of \df{depth}
(i.e.\ distance from $o$) at most $n$. Consider the normal
game played on $\T_n$, but \emph{declared to be a draw if
the token ever reaches depth $n$}.  The outcome of this
game may be computed by assigning all depth-$n$ vertices of
$\T_n$ to $\vd$, and then using the recurrence
\eqref{recur} to classify the other vertices. Then $N_n$ is
the probability that the Next player wins starting from
$o$, and similarly for $P_n$ and $D_n$. Similarly,
$\tN_n,\tP_n,\tD_n$ be the outcome probabilities for the
mis\`ere game on $\T_n$ where we declare a draw at depth
$n$.  For the escape game, declare vertices at depth $n$ to
be \emph{wins for the escaper}; then
$\si_n,\sii_n,\ei_n,\eii_n$ are the relevant outcome
probabilities.

\begin{cor}[Truncation and limits]\label{trunc}
For any offspring distribution $\bp$, with the above
notation, we have $N=\lim_{n\to\infty}N_n$, and similarly
for $P,D,\tN,\tP,\tD,\si,\sii,\ei,\eii$.
\end{cor}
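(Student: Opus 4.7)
The plan is to deduce this directly from \cref{compact}. The essential observation is that the sets $\vn_n$, $\vp_n$, etc.\ defined in the previous section depend only on the finite subtree $\T_n$, and they are monotone increasing in $n$. Combined with \cref{compact}, monotone convergence of probabilities does the rest.

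First, I would verify that the definition of $N_n$ as an outcome probability on the truncated tree $\T_n$ (with depth-$n$ vertices declared draws) coincides with $\P(o\in\vn_n)$, where $\vn_n$ is the set defined by recursion \eqref{recur} on the infinite tree. This holds because the recursion \eqref{recur} with base case $\vn_0=\vp_0=\emptyset$ is equivalent to the procedure of assigning all depth-$n$ vertices to $\vd$ and then working back up through $\T_n$ using \eqref{recur-inf}; in both cases $v\in\vn_n$ iff the Next player has a strategy winning in strictly fewer than $n$ moves. The corresponding interpretations for the mis\`ere game (draw declared at depth $n$) and the escape game (Escaper declared winner at depth $n$) match the base cases built into the recursions \eqref{recur-mis} and \eqref{recur-esc} respectively, together with $\vei_n = V\setminus\vsii_n$ and $\veii_n = V\setminus\vsi_n$.

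Next, since the out-degrees of $\T$ are almost surely finite (each is distributed as $\bp$, a probability measure on $\N$), \cref{compact} applies on almost every realization of $\T$, giving $\vn = \bigcup_n \vn_n$ and analogously for $\vp$, $\vnm$, $\vpm$, $\vsi$, $\vsii$. The sets are increasing in $n$, so by monotone convergence of indicators,
\[
N_n = \P(o\in\vn_n) \;\uparrow\; \P(o\in\vn) = N,
\]
and likewise $P_n\to P$, $\tN_n\to\tN$, $\tP_n\to\tP$, $\si_n\to\si$, $\sii_n\to\sii$. For the remaining four quantities I would use the complementary identities: $D_n = 1 - N_n - P_n \to 1 - N - P = D$, and similarly $\tD_n\to\tD$, while $\ei_n = 1 - \sii_n \to 1 - \sii = \ei$ and $\eii_n = 1 - \si_n \to 1 - \si = \eii$.

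There is no serious obstacle here; the argument is essentially bookkeeping on top of \cref{compact}. The one point requiring a moment's care is the identification of the two interpretations of $N_n$ (truncated-game outcome versus $\P(o\in\vn_n)$), and the analogous identifications for the escape game, where one must check that declaring depth-$n$ vertices to be Escaper wins corresponds exactly to the initial condition $\vsi_0=\vsii_0=\emptyset$ in \eqref{recur-esc}.
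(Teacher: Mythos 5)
Your proposal is correct and follows essentially the same route as the paper: apply \cref{compact} realization-by-realization (out-degrees being a.s.\ finite) to get $\vn=\bigcup_n\vn_n$, deduce $N_n\nearrow N$ and likewise for $P,\tN,\tP,\si,\sii$ by monotone convergence, and then obtain $D,\tD,\ei,\eii$ by complementation. The extra care you take in identifying $\P(o\in\vn_n)$ with the truncated-game probabilities is a point the paper handles by definition in the surrounding text rather than inside the proof, but it is the same argument.
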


\begin{proof}
By \cref{compact} we have $N_n\nearrow N$ as $n\to\infty$.
Similarly, $P_n\nearrow P$.  (In fact, since the first player can only win in an odd number of moves, we have $N_{2k}=N_{2k+1}$ for all integers $k\geq 0$, and similarly $P_{2k+1}=P_{2k+2}$.)  Since $D=1-P-N$ and
$D_n=1-P_n+N_n$, we have $D_n\searrow D$.  The same
argument works for the mis\`ere game.  Similarly, for the
escape game we get $S^{(j)}_n\nearrow S^{(j)}_n$ for
$j=1,2$, but $E^{(1)}=1-S^{(2)}$ and $E^{(2)}=1-S^{(1)}$.
\end{proof}

Recall that we define the generating function
$G(x)=G_{\mathbf{p}}(x):=p_0+p_1 x+p_2 x^2+\cdots$,
which is a continuous, increasing, convex function from $[0,1]$ to $[0,1]$.  Recall that we also define the functions
\[
F(x):=1-G(x);\qquad H(x):=1-G(x)+p_0,
\]
which are decreasing and concave.
%
%\section{Galton-Watson Trees}
%We specialize to the case where $G$ is a Galton-Watson tree,
%with root $O$ and offspring probability mass function
%$\mathbf{p}=(p_0,p_1,\ldots)$, with generating function
%$G(x)=G_{\mathbf{p}}(x):=p_0+p_1 x+p_2 x^2+\cdots$.
%Let
%\begin{align*}
%&\n=\P(O\in\vn);\quad \p=\P(O\in\vp); \quad \d=\P(O\in\vd);\\
%&\nm=\P(O\in\vnm);\quad \pm=\P(O\in\vpm); \quad \dm=\P(O\in\vdm);\\
%&\si=\P(O\in\vsi);\quad \sii=\P(O\in\vsii);\\
%&\ei=\P(O\in\vei); \quad \eii=\P(O\in\veii).
%\end{align*}
%Define
%\[
%F(x):=1-G(x);\qquad H(x):=1-G(x)+p_0.
%\]
%
%We may derive formulae for the quantities above by truncating
%the tree at depth $n$, declaring the leaves to be draws
%(respectively, escapes), applying the recursions of the
%previous section and taking limits as $n\to\infty$.  We obtain
%the following.

\begin{proof}[Proof of \cref{main}]
First consider the normal game.  \cref{trunc} gives
$(\n,\p,\d)=\lim_{n\to\infty} (\n_n,\p_n,\d_n)$.  We apply
the recursion \eqref{recur} at the root $o$, noting that
there is an independent copy of $\T$ rooted at each child.
We obtain for $n\geq 0$
$$\p_{n+1}=G(\n_n); \qquad 1-\n_{n+1}=G(1-\p_n).$$
This implies that $N_{n+2}=F^2(N_n)$ and
$1-P_{n+2}=F^2(1-P_n)$. Note also that $\n_0=\p_0=0$.
Therefore, since $F^2$ is increasing and continuous,
\begin{align*}
\n&=\lim_{n\to\infty} F^{2n}(0)=\min\fp(F^2);\\
1-\p&=\lim_{n\to\infty} F^{2n}(1)=\max\fp(F^2).
\intertext{Hence,}
\d&=1-\n-\p=\max\fp(F^2)-\min\fp(F^2).
\end{align*}

The arguments for the other games are similar.  For the mis\`ere game, the recursion \eqref{recur-mis} gives
$\nm_{n+2}=H^2(\nm_n)$ and $1-\pm_{n+2}=H^2(1-\pm_n)$, so that
\begin{align*}
\nm&=\lim_{n\to\infty} H^{2n}(0)=\min\fp(H^2);\\
1-\pm&=\lim_{n\to\infty} H^{2n}(1)=\max\fp(H^2);\\
\dm&=1-\nm-\pm=\max\fp(H^2)-\min\fp(H^2).
\end{align*}

For the escape game, \eqref{recur-esc} gives
$\si_{n+1}=H(\ei_n)$ and $\ei_{n+1}=F(\si_n)$, so that
\begin{align*}
\si&=\lim_{n\to\infty} (H\circ F)^{n}(0)=\min\fp(H\circ F);\\
\ei&=\lim_{n\to\infty} (F\circ H)^{n}(1)=\max\fp(F\circ H);\\
\sii&=1-\ei;\qquad
\eii=1-\si.\qedhere
\end{align*}
\end{proof}

We note a sense in which the escape game is intermediate between the other two games: its outcome probabilities arise from \emph{alternately} iterating the two functions $F$ and $H$ that govern the others.
For later use we note the following relations between
outcome probabilities of the games on the full tree.
\begin{cor}\label{extras}
For any offspring distribution we have:
\begin{gather*}
1-\p=F(\n); \qquad \n=F(1-\p);\\
1-\pm=H(\nm); \qquad \nm=H(1-\pm);\\
\si=H(\ei); \qquad \ei=F(\si).
\end{gather*}
\end{cor}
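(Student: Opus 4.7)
The plan is to prove each of the six identities by applying the one-step recursions \eqref{recur}, \eqref{recur-mis}, \eqref{recur-esc} at the root of $\T$ and passing to the limit via \cref{trunc}. The necessary computations are already implicit in the proof of \cref{main}: there, two-step identities such as $N_{n+2}=F^2(N_n)$ were obtained by composing a pair of one-step identities, and those one-step identities are exactly what we need here.

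For the normal game, I would condition on the number $k$ of children of the root and use the independence of the subtrees rooted at the children. Combined with \eqref{recur} this gives, for each $n\ge 0$,
\[
\p_{n+1}=\sum_{k\ge 0}p_k\n_n^k=G(\n_n)=1-F(\n_n),\qquad \n_{n+1}=1-G(1-\p_n)=F(1-\p_n).
\]
By \cref{trunc} we have $\n_n\to\n$ and $\p_n\to\p$, and continuity of $F$ lets us pass to the limit to obtain $1-\p=F(\n)$ and $\n=F(1-\p)$.

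The mis\`ere and escape cases are treated by the same scheme. For the mis\`ere game, the presence of the ``$\Gamma=\emptyset$'' and ``$\Gamma\ne\emptyset$'' clauses in \eqref{recur-mis} adds or subtracts a $p_0$, turning $F$ into $H$:
\[
\pm_{n+1}=G(\nm_n)-p_0=1-H(\nm_n),\qquad \nm_{n+1}=p_0+\sum_{k\ge 1}p_k\bigl(1-(1-\pm_n)^k\bigr)=H(1-\pm_n).
\]
For the escape game, \eqref{recur-esc} similarly produces $\si_{n+1}=H(\ei_n)$ (the extra $p_0$ arising from the $\Gamma=\emptyset$ clause giving Stopper an immediate win) and $\ei_{n+1}=1-\sii_{n+1}=1-G(\si_n)=F(\si_n)$. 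Passing to the limit using \cref{trunc} and continuity of $F$ and $H$ then yields the remaining four identities.

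The only thing to watch is the correct accounting of the $\Gamma=\emptyset$ clauses in \eqref{recur-mis} and \eqref{recur-esc}, which is precisely what distinguishes $H$ from $F$ in the mis\`ere and stopper-first lines; otherwise the argument is routine and there is no real obstacle, the corollary being essentially the observation that the one-step recursions survive the passage to the limit.
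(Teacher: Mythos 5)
Your proposal is correct and follows essentially the same route as the paper, which proves the corollary by taking limits as $n\to\infty$ of the one-step recurrences $\p_{n+1}=G(\n_n)$, $1-\n_{n+1}=G(1-\p_n)$ (and their mis\`ere and escape analogues) established in the proof of \cref{main}. Your accounting of the $\Gamma=\emptyset$ clauses, which converts $F$ into $H$ in the mis\`ere and Stopper-moves-first lines, matches the paper's computation exactly.
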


\begin{proof}
These can be deduced either by taking limits as
$n\to\infty$ of the corresponding recurrences in the above
proof, or by directly applying \eqref{recur-inf} and its
analogues for the other games.
\end{proof}

\section{Inequalities}

In this section we prove the inequalities of \cref{ineq}.
The fact that no other inequalities hold in general is
proved in \cref{sec-examples}.

\begin{proof}[Proof of \cref{ineq} (i)]
As remarked earlier, these inequalities of probabilities reflect inclusions that hold more generally.  Specifically, for the games on any directed acyclic graph $\G$, we have
$$\vn\subseteq\vsi; \quad \vnm\subseteq\vsi;\quad \vp\subseteq\vsii; \quad
\vpm\subseteq\vsii.$$ Indeed, the starting vertex lies in
$\vn$ if and only if the first player can ensure that the
game reaches a vertex of out-degree zero after an odd
number of steps. And the vertex lies in $\vnm$ if and only
if the first player can ensure that the game reaches a
vertex of out-degree zero after an even number of steps. In
either case, Stopper (if playing first) can win the escape
game by using the same strategy.  This gives the first two
inclusions. Similarly, considering the second player gives
the last two inclusions.
\end{proof}

\begin{proof}[Proof of \cref{ineq} (ii)]
We show that $\sii\leq \si$ and $\pm\leq\nm$ for any
Galton-Watson tree $\T$. Consider the escape game, and
suppose Stopper has first move. We propose a partial
strategy for Stopper.  If the root has no children, Stopper
wins immediately. If the root has one or more children, let
Stopper move to a child chosen uniformly at random (without
looking at the remainder of the tree). The rest of the game
is then played in a subtree with the same law as $\T$, with
Stopper moving second. This yields
\begin{align*}
\si&\geq p_0+(1-p_0)\sii \geq \sii.
\end{align*}
In the mis\`ere game, the first player can follow the same
strategy, to give
\begin{align*}
\nm&\geq p_0+(1-p_0)\pm\geq \pm.\qedhere
\end{align*}
\end{proof}

Moving on to the more interesting inequalities in
\cref{ineq} (iii), we start with some lemmas.

\begin{lemma}\label{squarelemma}
Consider any offspring distribution.  We have $H'(x)\geq
-1$ for all $x\leq \nm$. If $1-\pm>\nm$ (i.e.\ if $\dm>0$)
then $H'(x)\leq -1$ for all $x\geq 1-\pm$.
\end{lemma}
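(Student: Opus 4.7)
The plan is to exploit the concavity of $H$. By \cref{main}(ii) and \cref{extras}, the quantities $\nm$ and $1-\pm$ are the extremal fixed points of $H^2$ in $[0,1]$, and they satisfy $H(\nm)=1-\pm$ and $H(1-\pm)=\nm$. Since $G$ is continuous, increasing, and convex, $H$ is continuous, decreasing, and concave on $[0,1]$; in particular $H'$ is non-increasing. It therefore suffices to establish the two derivative bounds at the single points $x=\nm$ and $x=1-\pm$, from which the claims for all $x\leq\nm$ and all $x\geq 1-\pm$ will follow by monotonicity of $H'$.

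I would first treat the main case $\dm>0$, i.e.\ $\nm<1-\pm$. Let $L$ be the line through $(\nm,1-\pm)$ and $(1-\pm,\nm)$; its slope equals $-1$. Concavity of $H$ on $[\nm,1-\pm]$ forces $H\geq L$ throughout this interval, with equality at both endpoints. Since $H-L$ is non-negative on $[\nm,1-\pm]$ and vanishes at both endpoints, its right derivative at $\nm$ is $\geq 0$ and its left derivative at $1-\pm$ is $\leq 0$, giving $H'(\nm)\geq -1$ and $H'(1-\pm)\leq -1$ simultaneously.

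The remaining case $\dm=0$ (which is only relevant for the first assertion) requires a separate argument, since the chord degenerates to a single point. Here $\nm=1-\pm$ is the unique fixed point of $H^2$ in $[0,1]$, and the iteration $H^{2n}(0)\nearrow\nm$ from the proof of \cref{main}(ii) forces $H^2(x)\geq x$ on $[0,\nm]$. Combined with $H^2(\nm)=\nm$, this yields $(H^2)'(\nm)\leq 1$; the chain rule together with $H(\nm)=\nm$ gives $(H^2)'(\nm)=H'(\nm)^2$, whence $H'(\nm)^2\leq 1$ and so $H'(\nm)\geq -1$.

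The main subtlety I anticipate is this degenerate case $\dm=0$: the chord-based argument is genuinely unavailable there, but the short iteration argument above closes the gap. Everything else is a direct exploitation of the concavity of $H$ and the fact that $\{\nm,1-\pm\}$ is a $2$-cycle of $H$.
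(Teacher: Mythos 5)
Your proof is correct and follows essentially the same route as the paper's: reduce to the endpoint derivatives via concavity of $H$, and in the case $\dm>0$ use exactly the paper's observation that the graph of $H$ passes through the corners $(\nm,1-\pm)$ and $(1-\pm,\nm)$ of the square $[\nm,1-\pm]^2$, so concavity pins $H'(\nm)\geq-1$ and $H'(1-\pm)\leq-1$. The only difference is in the degenerate case $\dm=0$, where the paper derives a contradiction from the instability of the fixed point under the monotone iteration $H^{2n}(0)\nearrow\nm$, whereas you argue directly that $H^2(x)\geq x$ on $[0,\nm]$ forces $(H^2)'(\nm)=H'(\nm)^2\leq 1$; your version is slightly more streamlined but rests on the same facts.
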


\begin{proof}
If $p_0\in\{0,1\}$ then the lemma is easy to check.
Therefore assume that $0<p_0<1$.  Since $H$ is concave, it
is enough to check the values of $H'$ at $\nm$ and $1-\pm$.
Recall from the proof of \cref{main} that $\nm$ is the
smallest fixed point of $H^2$ in $[0,1]$, and $1-\pm$ is
the largest fixed point. Recall also that
$\lim_{n\to\infty} H^{2n}(0)=\nm$.  We claim that the
sequence $(H^{2n}(0))_{n\geq 0}$ is strictly increasing.
Indeed, we have $H^2(0)>0$, and we can apply the strictly
increasing function $H^2$ repeatedly to both sides.

Suppose first that $H^2$ has only one fixed point.  Then
$H$ has the same fixed point, i.e.\ $H(\nm)=\nm$. Suppose
for a contradiction that $H'(\nm)<-1$. The idea is that
$\nm$ is an unstable fixed point for $H$ under iteration.
More precisely, since $H$ is continuous and concave, we
have for some $\epsilon>0$ that $H'(x)<-1$ for all
$x>\nm-\epsilon$. Since $H^{2n}(0)$ is strictly increasing
with limit $\nm$, we have $\nm-\epsilon<H^{2m}(0)<\nm$ for
some $m$.  But then the assumption on $H'$ gives that the
next two iterations move the iterate further from $\nm$,
i.e.
$$\nm-H^{2m+2}(0)>H^{2m+1}(0)-\nm>\nm-H^{2m}(0),$$
contradicting that $H^{2n}(0)$ is increasing.

On the other hand, if $H^2$ has more than one fixed point,
then $\nm$ and $1-\pm$ are the smaller and larger points of
a two-cycle of $H$, with $H(\nm)=1-\pm$ and $H(1-\pm)=\nm$.
Now consider the square $[\nm, 1-\pm]^2$. The graph of the
function $H$ passes through the top-left and bottom-right
corners of this square. Since $H$ is concave, it follows
that $H'(1-\pm)\leq -1$ and $H'(\nm)\geq -1$ as required.
\end{proof}

\begin{lemma}\label{pnlemma}
For any offspring distribution, $\pm_n\leq\p_n$ for all $n\geq0$.
\end{lemma}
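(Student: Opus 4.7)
The plan is induction on $n$. For $n \in \{0,1\}$ one has $\pm_0 = \p_0 = 0$ and $\pm_1 = 0 \leq p_0 = \p_1$, so fix $n \geq 2$ and assume $\pm_k \leq \p_k$ for all $k < n$. Applying \eqref{recur} and \eqref{recur-mis} at the root of the Galton-Watson tree gives the one-step recurrences $\p_n = G(\n_{n-1})$, $\n_n = F(1-\p_{n-1})$, $\pm_n = G(\nm_{n-1}) - p_0$, and $\nm_n = H(1-\pm_{n-1})$. In particular
\[
\p_n - \pm_n = p_0 - \big[G(\nm_{n-1}) - G(\n_{n-1})\big],
\]
so the induction step reduces to showing $G(\nm_{n-1}) - G(\n_{n-1}) \leq p_0$. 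If $\nm_{n-1} \leq \n_{n-1}$ this is immediate from monotonicity of $G$, so the substantive case is $\nm_{n-1} > \n_{n-1}$.

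In that case the plan has two ingredients. First, I use the inductive hypothesis at index $n-2$ to bound the gap $\nm_{n-1} - \n_{n-1}$: from the recurrences above,
\[
\nm_{n-1} - \n_{n-1} = H(1-\pm_{n-2}) - F(1-\p_{n-2}) = p_0 + \big[F(1-\pm_{n-2}) - F(1-\p_{n-2})\big] \leq p_0,
\]
since $\pm_{n-2} \leq \p_{n-2}$ and $F$ is decreasing. Second, I invoke \cref{squarelemma}: because $H'(x) = -G'(x)$, the stated bound $H'(x) \geq -1$ on $[0,\nm]$ says that $G$ is $1$-Lipschitz on this interval. Since $\pm_n$ is nondecreasing in $n$ (a Previous-player strategy winning in fewer than $n$ moves also wins in fewer than $n+1$) and $H$ is decreasing, $\nm_n = H(1-\pm_{n-1})$ is nondecreasing in $n$, so $\nm_{n-1} \leq \nm$. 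The interval $[\n_{n-1}, \nm_{n-1}]$ therefore lies inside $[0, \nm]$, and the Lipschitz bound combines with the first ingredient to give
\[
G(\nm_{n-1}) - G(\n_{n-1}) \leq \nm_{n-1} - \n_{n-1} \leq p_0,
\]
closing the induction.

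The main obstacle is that the implication ``$y - x \leq p_0 \Rightarrow G(y) - G(x) \leq p_0$'' is simply false in general on $[0,1]$: it would require $G$ to be $1$-Lipschitz globally, which fails for any supercritical distribution ($\mu > 1$), e.g.\ binary branching with $t$ close to $1$. The substance of the argument is therefore to confine both iterates $\n_{n-1}$ and $\nm_{n-1}$ to the region $[0, \nm]$ where \cref{squarelemma} supplies exactly the Lipschitz control needed; without that preceding lemma the naive mean-value estimate $G(\nm_{n-1}) - G(\n_{n-1}) \leq G'(\xi)(\nm_{n-1} - \n_{n-1})$ does not close the induction.
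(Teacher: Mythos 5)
Your proof is correct and follows essentially the same route as the paper's: an induction with a two-step lookback through the generating-function recurrences, with Lemma~\ref{squarelemma} supplying the crucial derivative bound $G'\leq 1$ (equivalently $H'\geq -1$) on $[0,\nm]$. The only difference is bookkeeping: the paper writes the increment as $H(H(1-\pm_n))-H(H(1-\pm_n)-p_0)+p_0$ and applies concavity of $H$ at the single point $H(1-\pm_n)\leq\nm$, whereas you compare $G$ at the two points $\n_{n-1}\leq\nm_{n-1}\leq\nm$ after first bounding their gap by $p_0$ --- the same estimate over essentially the same interval of length at most $p_0$ with right endpoint below $\nm$.
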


\begin{proof}
The result is true for $n=0,1$, since $\p_0=\pm_0=\pm_1=0$
and $\p_1=p_0$. So it will be enough to show that $\pm_n\leq \p_n$
implies $\pm_{n+2}\leq \p_{n+2}$.

So suppose that $\pm_n\leq \p_n$. Then, using the
recurrences in the proof of \cref{main}, and the fact that
$F^2$ is increasing,
\begin{align*}
\p_{n+2}-\pm_{n+2}
&=(1-\pm_{n+2})-(1-\p_{n+2})
\\
&=H^2(1-\pm_{n})-F^2(1-\p_{n})
\\
&\geq H^2(1-\pm_{n})-F^2(1-\pm_{n})
\\
&= H[H(1-\pm_n)]-H[H(1-\pm_n)-p_0]+p_0
\\
%&= p_0+F(p_0+F(1-\pm_n))-F(F(1-\pm_n))
%\\
%\intertext{(since $F(F(.))$ is an increasing function)}
%&\geq p_0+p_0F'(p_0+F(1-\pm_n))
%\\
%\intertext{(since $F$ is concave)}
%&=p_0+p_0H'(H(1-\pm_n))
\intertext{Since $H$ is concave, the last expression is at least}
&p_0 H'[H(1-\pm_n)]+p_0.
\end{align*}
Now $1-\pm_n \searrow 1-\pm$, and so $H(1-\pm_n)\nearrow H(1-\pm)=\nm$. In
particular $H(1-\pm_n)\leq \nm$, and so by Lemma \ref{squarelemma},
$H'(H(1-\pm_n))\geq -1$. Hence $\p_{n+2}-\pm_{n+2}\geq 0$ as required.
\end{proof}

\begin{proof}[Proof of \cref{ineq} (iii)]
The inequality $\pm\leq \p$ follows immediately from
\cref{pnlemma,trunc}.

For the inequality $\d\leq \dm$ it will similarly be enough
to prove that $\d_n\leq \dm_n$ for all $n$.  Again we
proceed by induction. We have $\d_0=\dm_0=1$. Suppose that
$\d_{n}\leq\dm_{n}$. From Lemma \ref{pnlemma} we have
$\pm_{n}\leq\p_{n}$. Then, since $F$ is decreasing and
concave, and $F$ and $H$ differ by a constant, and using
the recurrences from the proof of \cref{main},
%from the definitions of the games, and the fact that $G$
%is increasing and convex,
%we have
\begin{align*}
\d_{n+1}
&=1-P_{n+1}-N_{n+1}\\
&=F(N_{n})-F(1-P_{n})\\
&=F(1-P_{n}-D_{n})-F(1-P_{n})\\
%&\leq F(1-P_{n}-\dm_{n})-F(1-P_{n})\\
&\leq F(1-\pm_{n}-\dm_{n})-F(1-\pm_{n})\\
&=H(1-\pm_{n}-\dm_{n})-H(1-\pm_{n})\\
&=\dm_{n+1},
\end{align*}
completing the induction.

Finally we will show that $\pm\leq\n$ by considering two cases.
First suppose that $\dm>0$. Then by Lemma
\ref{squarelemma}, we have $H'(x)\leq -1$ for all $x\geq
1-\pm$.  Since $F$ and $H$ differ by a constant, also
$F'(x)\leq -1$ for all $x\geq 1-\pm$. Since $F(1)=0$, it
follows that
\begin{equation}\label{use}
F(1-\pm)\geq\pm.
\end{equation}
As proved above, we have $\pm\leq\p$. Since $F$ is
decreasing, this gives $F(1-\pm)\leq F(1-\p)=\n$. Combining
this with \eqref{use} gives $\pm\leq\n$ as required.

Now suppose instead that $\dm=0$.  Since $\d\leq \dm$ we have
also $\d=0$. Then $\n=1-\p$ is a fixed point of $F$, and
$\nm=1-\pm$ is a fixed point of $H$. Since $\pm\leq\p$ from
above, we have $\n\leq1-\pm$.
The functions $F$ and $H$ differ by a constant, and
both are concave and decreasing, so
\begin{equation*}\label{derivs}
H'(y)\leq F'(x)\leq 0 \quad\text{for all } x\in(0,\n)\text{ and } y\in(1-\pm,1).
\end{equation*}
Comparing the lengths of the intervals $(0,\n)$ and
$(1-\pm,1)$, this implies that
\begin{equation}\label{eitheror}
\pm\leq\n \quad\text{or}\quad H(1)-H(1-\pm)\leq F(\n)-F(0).
\end{equation}
In the former case we are done.  For the latter case note
that
\begin{align*}
H(1)-H(1-\pm)&=p_0-(1-\pm)=\pm+p_0-1;\\
F(\n)-F(0)&=\n-(1-p_0)=\n+p_0-1.
\end{align*}
Substituting into \eqref{eitheror} gives $\pm\leq \n$ in
the latter case also.
\end{proof}

\section{Examples}
\label{sec-examples}

In this section we use \cref{main} to prove
\cref{examples,exotic}, and to complete the proof of
\cref{ineq} by showing that no further inequalities hold.

\begin{proof}[Proof of \cref{examples} (i) -- binary
branching] Recall that $(p_0,p_1,p_2)=(1-p,0,p)$, so each individual has
either $0$ or $2$ children.  It turns out that in this example all relevant
quantities can be computed explicitly.  We have $G(x)=1-p+px^2$,
$F(x)=p(1-x^2)$ and $H(x)=1-px^2$. We treat the three games separately.

\medskip\paragraph{\em Normal Game}  \cref{main} gives the draw probability
$D$ in terms of the
fixed points of $F^2$, i.e.\ the zeros of $F^2(x)-x$. See \cref{binarynormalfigure} for graphs of this function.  We have the
factorization into two quadratics
$$F^2(x)-x=(p-x-px^2)(1-p^2-px+p^2x),$$
where the first factor equals $F(x)-x$.  Viewed as a
function of $x$, the first factor has exactly one zero, at
$x_0$ say, in $[0,1]$ for all $p\in[0,1]$. The second
factor has two distinct zeros $x_-<x_+$ in $[0,1]$ if and
only if its discriminant $p^2(4p^2-3)$ is positive, i.e.\
when $p>p_d:=\sqrt 3/2$.  Moreover, we have $x_-<x_0<x_+$
for $p>p_d$, while at $p=p_d$, all three roots coincide,
and the function has a stationary point of inflection on
the axis. (These last facts can be seen without further
computation: if $x_-$ is a fixed point of $F^2$ satisfying
$x_-<x_0$, then $x_+:=F(x_-)$ is also a fixed point, and
since $F$ is strictly decreasing we have $x_0<x_+$.
Moreover, the roots of a quadratic vary continuously with
its coefficients.) Therefore, by \cref{main} we have $D=0$
for $p\leq p_d$, and $D=x_+-x_-$ for $p>p_d$, giving the
claimed continuous phase transition.

\medskip\paragraph{\em Mis\`ere game} The analysis is similar.  We have the factorization
$$H^2(x)-x=(1-x-p x^2)(1-p-px-p^2x^2),$$
where the first factor is $H(x)-x$.  The first factor has exactly one zero at
$x_0$ say, and the second factor has two further zeros at $x_-<x_0<x_+$ if
and only if $p>p_m:=3/4$. For the same reasons as before, the transition is
continuous.

\medskip\paragraph{\em Escape Game} \cref{main} gives $\si=\min\fp(H\circ F)$.  See \cref{binaryescapefigure}.  We have
$$H\circ F(x)-x=(1-x)\bigl[1-p^3(1-x)(1+x)^2\bigr].$$
There is always a zero at $x=1$.  On $[0,1]$, the function $(1-x)(1+x)^2$ has
maximum $32/27$ at $x=1/3$. Therefore, there are two additional zeros if
$p^3\,32/27 \leq 1$, i.e.\ if $p\geq p_e:=3/2^5$.  The two additional zeros
are strictly less than $1$, and coincide at $x=1/3$ when $p=p_e$. Thus, $\si$
equals $1$ for $p<p_e$, and jumps to $1/3$ at $p=p_e$, giving the claimed
behaviour for $\eii=1-\si$.  \cref{extras} gives that $\ei>0$ if and only if
$\eii>0$.
\end{proof}
\begin{figure}
\begin{center}
\includegraphics[width=0.68\textwidth]{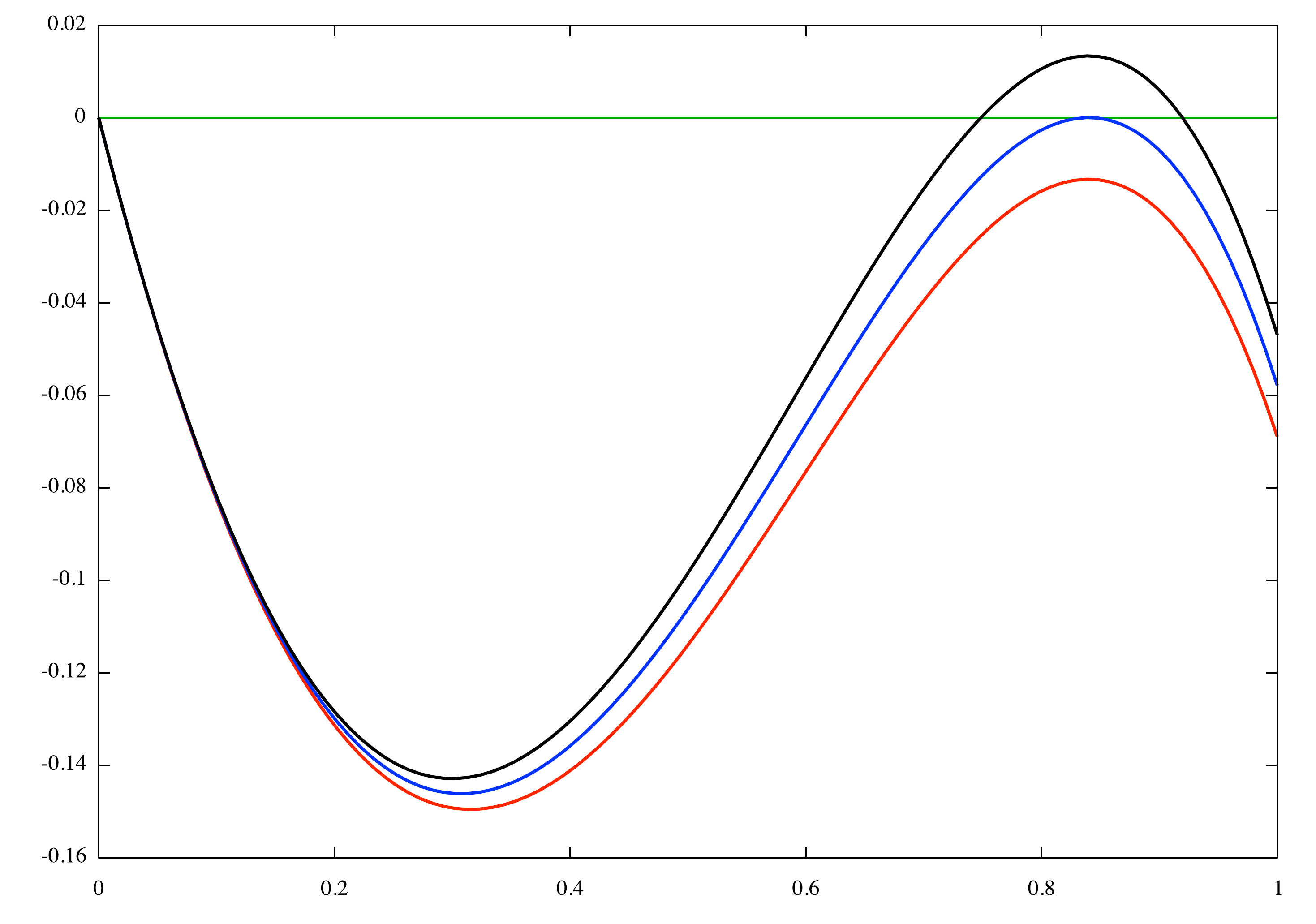}
\end{center}
\caption{
The function $F( H(x))-x$ for the binary branching distribution,
for the three values $p=0.935, 0.945, 0.955$.
The probability of escape is the largest root.
The lowest curve has its only root at 0. At the critical point (middle curve) a new root appears
and the probability of escape jumps to a positive value. Above the critical point (upper curve)
the function has three roots.
}\label{binaryescapefigure}
\end{figure}

%
%
%The branching process has positive probability of survival
%iff $p>1/2$.
%
%\smallskip
%\noindent\textit{Normal game:}
%
%Let $p_c=\sqrt{3}/2=0.866...$. One can verify that the
%equation $F(F(x))=x$ has only one solution for $p\leq p_c$,
%and precisely three solutions for $p> p_c$. So the normal
%game has positive probability of a draw iff $p>p_c$. The
%phase transition is continuous; the probability of a draw
%is 0 at $p=p_c$ and converges to 0 as $p\downarrow p_c$.
%See Figure \ref{binarynormalfigure}.

%
%\smallskip
%\noindent\textit{Mis\`ere game:}
%
%Let $p_m=3/4$. Again, the equation $H(H(x))=x$ has only one
%solution for $p\leq p_m$ and precisely three solutions for
%$p>p_m$. So the mis\`ere game has positive probability of a
%draw iff $p>3/4$. Again the phase transition is continuous.
%Note that $p_m<p_c$ in accordance with Theorem
%\ref{GWthm}(ix).
%
%\smallskip
%\noindent\textit{Escape game:}
%
%Let $p_e=3/2^{5/3}=0.945...$. The equation $F(H(x))=x$ has
%$x=0$ as its only solution for $p<p_e$. In this case the
%probability of escape is 0. For $p>p_e$ the equation has
%two larger solutions, and the probability of escape is
%positive. In the critical case the equation has precisely
%one larger solution; the probability of escape is therefore
%strictly positive at the critical point, and the phase
%transition is discontinuous.
%
%In fact, the probability jumps from $E_1=0$ for $p<p_e$ to
%$E_1=2^{4/3}/3=0.8399...$ at $p=p_e$.

\begin{proof}[Proof of \cref{examples} (ii) -- Poisson]
\sloppypar The offspring distribution is Poisson($\lambda$).  Thus, we have
$G(x)=e^{-\lambda(1-x)}$, $F(x)=1-e^{-\lambda(1-x)}$ and
$H(x)=1+e^{-\lambda}(1-e^{-\lambda x})$.  We will find that the behaviour of
the three games is qualitatively identical to that in the binary branching
case considered above, but that not all quantities can be computed explicitly.

\medskip
\paragraph{\em Normal game}
By \cref{main} we are interested in the fixed points of $F^2$.
Differentiating $F^2(x)-x$ twice with respect to $x$, we find that its first
derivative has exactly one turning point, a maximum at $x^*:=1-(\log
\lambda)/\lambda$, at which the first derivative equals $\lambda/e-1$.  We
deduce that when $\lambda\leq e$ the function $F^2(x)-x$ is strictly
decreasing on $[0,1]$, and thus has exactly one zero in $[0,1]$.

When $\lambda>e$, the function $F^2(x)-x$ has two turning points, a local
minimum followed by a local maximum.  Therefore it has at most three zeros.
We claim that it has exactly three.  To check this, note first that $F$
itself always has exactly one fixed point in $[0,1]$, say $x_0$, which
satisfies $\lambda=[-\log(1-x_0)]/(1-x_0)$.  This $x_0$ is also a fixed point
of $F^2$.  To show that $F^2(x)-x$ has three zeros it suffices to show that
its derivative is positive at $x_0$, which is equivalent to showing
$|F'(x_0)|>1$.  But $F'$ is negative and strictly decreasing in $x$, and
equals $-1$ precisely at $x=x^*$ (as defined above). Now $x_0$ is strictly increasing as a function of $\lambda$,
while $x^*$ is strictly decreasing as a function of $\lambda$. Therefore,
they coincide at exactly one $\lambda$, which is easily checked to be
$\lambda=e$. Therefore, we have $F'(x_0)<-1$ if and only if $\lambda>e$, as
required.

At the critical point $\lambda=e$, the function $F^2(x)-x$ has a stationary
point of inflection on the axis at $x_0$.  By \cref{main}, $D$ is the
distance between the zeros, which is continuous in $\lambda$, and equals $0$
if and only if $\lambda\leq e$.

\medskip\paragraph{\em Mis\`ere game}
The analysis and behaviour are similar to the normal game, except that the
critical point now has no closed-form expression. The derivative of
$H^2(x)-x$ has its maximum at $x^*=1-(\log \lambda)/\lambda$, at which the
first derivative is $\lambda e^{-1+\lambda e^{-\lambda}}-1$. This is positive
if and only if $\lambda>\lambda_m$, where $\lambda_m=2.103\ldots$ is the
solution of $\log\lambda+\lambda e^{-\lambda}=1$.  Thus, the function
$H^2(x)-x$ has one zero for $\lambda\leq \lambda_m$. Again, $H'(x^*)=-1$ for
all $\lambda$, and $x^*$ is increasing in $\lambda$, while the fixed point
$x_0$ of $F$ is decreasing (by implicit differentiation), with $x_0=x^*$ at
$\lambda=\lambda_m$.  By the same argument as before, this gives that
$H'(x^*)>-1$ if and only if $\lambda>\lambda_m$, hence $H^2$ has three fixed
points if and only if $\lambda=\lambda_m$.  And as before, at the critical
point $\lambda_m$ the function $H^2(x)-x$ has a stationary point of
inflection on the axis.  We deduce from \cref{main} that $\dm$ is continuous
in $\lambda$, and equals $0$ is and only if $\lambda\leq \lambda_m$.

\medskip\paragraph{\em Escape game}
The proof of \cref{main} gives that $\si$ is the minimum fixed point of
$H\circ F$.  This function always has a fixed point at $x=1$. Since $H\circ
F(x)= F^2(x)+e^{-\lambda}$, we can make use of the previous analysis of
$F^2$. For $\lambda<e$ the function $H\circ F(x)-x$ is decreasing and
therefore has exactly one zero. At $\lambda=e$ a stationary point of
inflection appears, but now it is strictly above the axis.  For all
$\lambda>e$ the function has a local minimum followed by a local maximum. For
$\lambda$ sufficiently close to $e$, the value of the function at its local
minimum is strictly positive.  But for $\lambda$ sufficiently large, it is
easy to check that the value at the local minimum is negative, and so $H\circ
F(x)-x$ has three zeros.  Moreover, we claim that the value of the function
at the local minimum is strictly decreasing as a function of $\lambda(>e)$,
so that it is negative if and only if $\lambda>\lambda_e$ for some critical
point $\lambda_e(>e)$.  To check this, it suffices to show that the function
$H\circ F(x)-x$ never has derivative zero with respect to $x$ and $\lambda$
simultaneously.  In fact, some algebra shows that the difference between the
two derivatives is never zero.  Finally, observe that $H\circ F(x)-x$ is
decreasing in a neighbourhood of $1$, so the locations of other zeros are
bounded away from $1$.  Hence $\eii=1-\si$ undergoes a discontinuous phase
transition at $\lambda=\lambda_e$ from $0$ to a positive value, and is
positive at the critical point, and is continuous elsewhere. Numerically, we
find $\lambda_e\approx 3.3185$. \cref{extras} shows that $\ei>0$ if and only
if $\eii>0$.
\end{proof}

\begin{proof}[Proof of \cref{examples} (ii) -- Geometric]
Let $\alpha\in(0,1)$ and let $p_k=(1-\alpha)\alpha^k$ for $k\geq 0$. Then
$G(x)=(1-\alpha)/(1-x\alpha)$.  It is straightforward to show that the
functions $F^2(x)-x$, $H^2(x)-x$, and $H\circ F(x)-x$ are all strictly
decreasing on $(0,1)$.  Therefore, by \cref{main}, the probabilities of draws
and escapes are zero.
\end{proof}

%\begin{thm}\label{negativeGWthm}
%Apart from the inequalities appearing in Theorem \ref{GWthm}, and those
%implied directly by them, no other inequalities between
%$\n$, $\p$, $\d$, $\nm$, $\pm$, $\dm$, $\ei$, $\eii$, $\si$, $\sii$
%hold in general.
%\end{thm}

%\begin{proof}[Proof of Theorem \ref{GWthm}(i)-(iv)]
%These follow immediately from parts (i)-(iv) of Theorem \ref{generalgraphthm}.
%\end{proof}

We now turn to the exotic examples of \cref{exotic}.
\begin{figure}
\begin{center}
\includegraphics[width=0.49\textwidth]{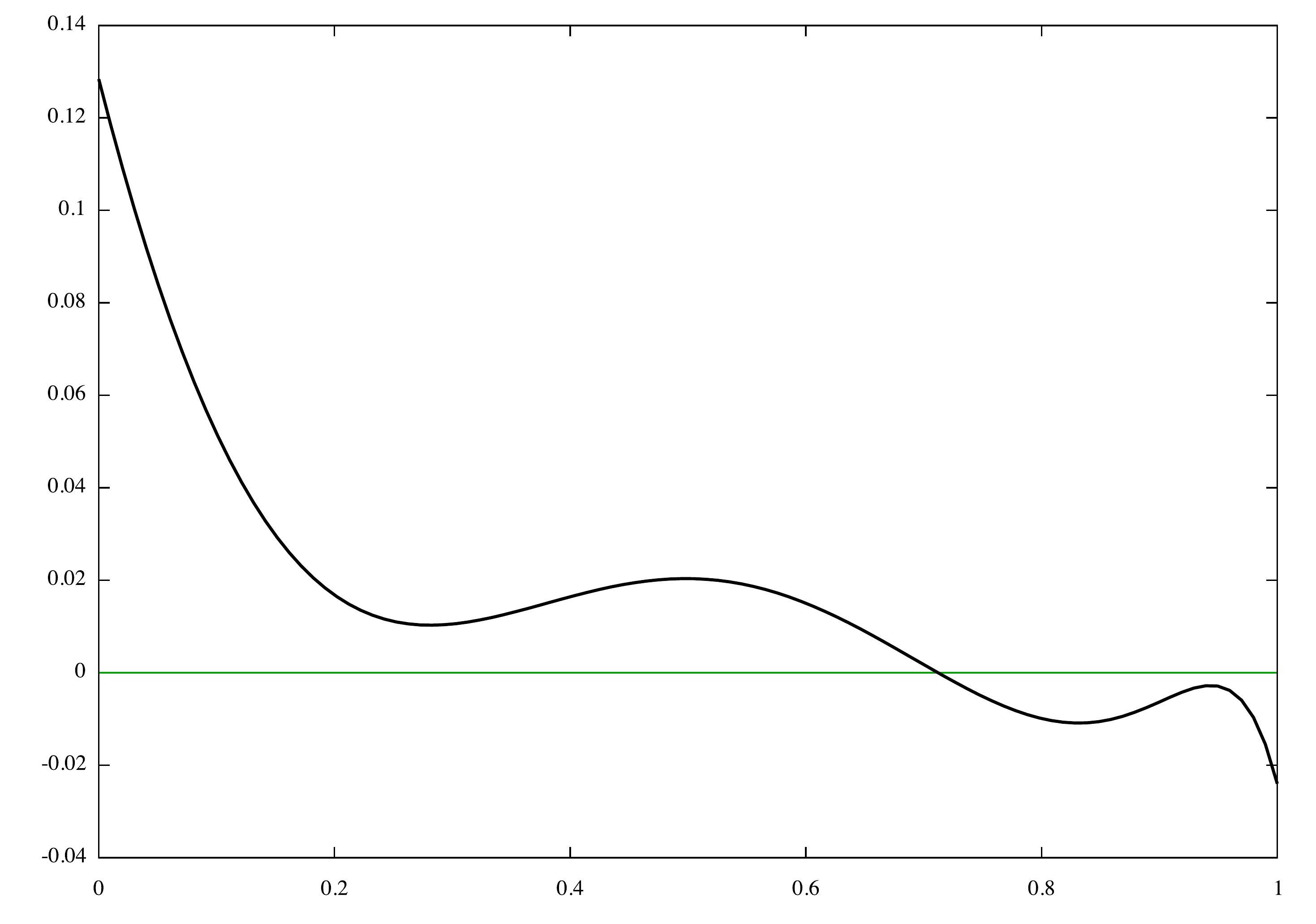}
\includegraphics[width=0.49\textwidth]{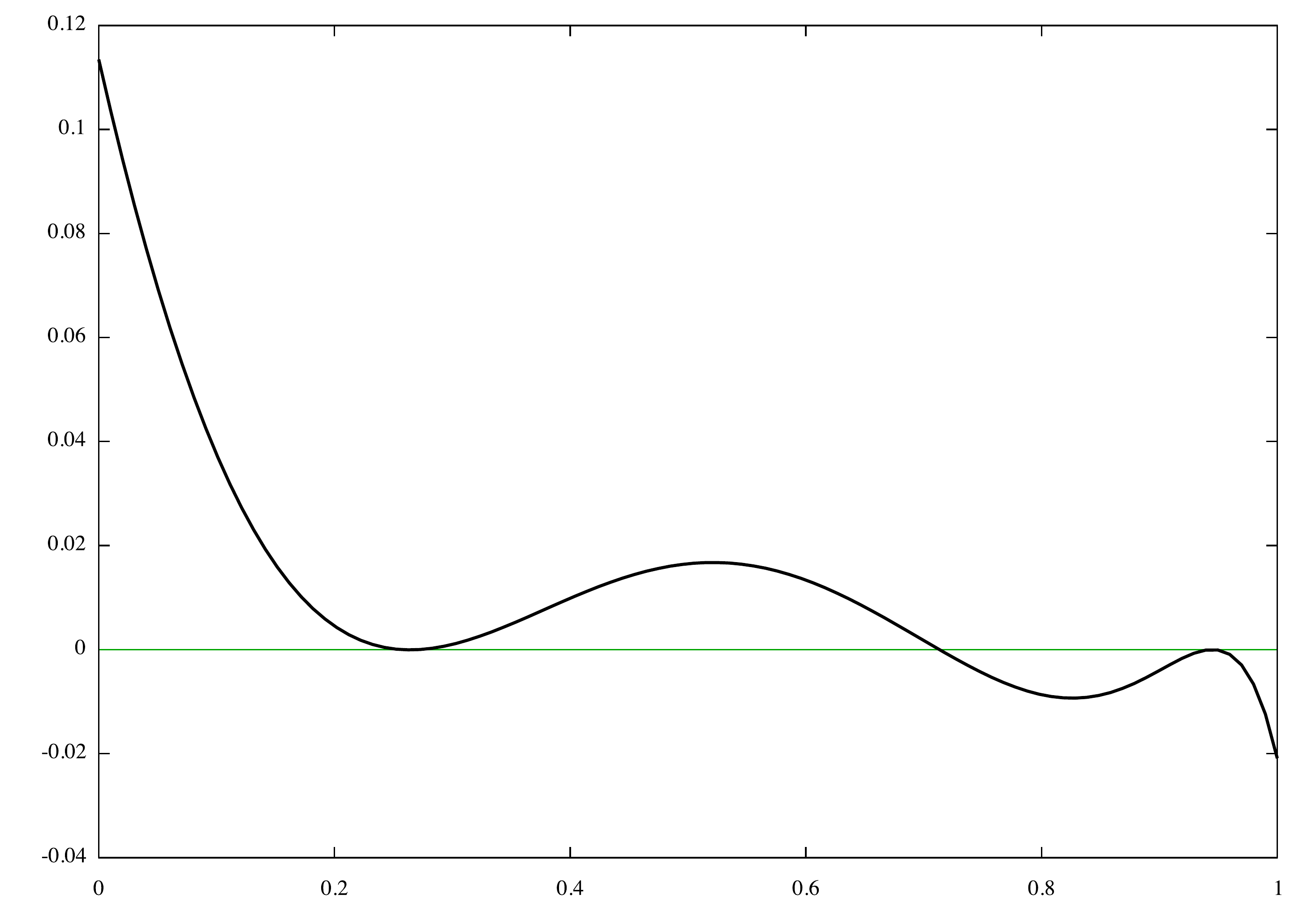}
\end{center}
\caption{
The function $F(F(x))-x$ for the family of distributions for \cref{exotic} (i), with $p=0.976$ on the left and $p=0.9791$ on the right.
On the left the function has a unique root and the probability of a draw is 0.
At the critical point, two new (double) roots appear and the probability of a draw
jumps to a positive value. Above the critical point the function has five roots.
}\label{discontinuousnormalfigure}
\end{figure}
\begin{proof}[Proof of \cref{exotic} (i)]
Let
\[
G(x)=(1-t)+t(0.5x^2+0.5x^{10});
\]
see \cref{discontinuousnormalfigure}.  There is a discontinuous phase transition at $t_c\approx
0.9791$.

For $t<t_c$, the equation $F^2(x)=x$ has a single solution.
For $t$ just smaller than $t_c$, we have $N=1-P\approx
0.7133$, and $D=0$.

At $t=t_c$, new solutions to $F^2(x)=x$ appear, at
$x^-\approx 0.264$ and $x^+\approx 0.945$. So the
probability of a draw jumps from 0 to $x^+-x^-\approx
0.681$. At $t_c$ itself the equation has three solutions,
with those at $x_-$ and $x_+$ being repeated roots, while
above $t_c$ the equation has five solutions.
\end{proof}

We remark that it is even possible for the draw probability
$D$ to jump from $0$ to $1$ as shown by the example $(p_0,p_1,p_2,p_3)=(\epsilon,\tfrac23,0,\tfrac13-\epsilon)$ discussed at the end of the final proof in this section.

\begin{figure}
\begin{center}
\includegraphics[width=0.69\textwidth]{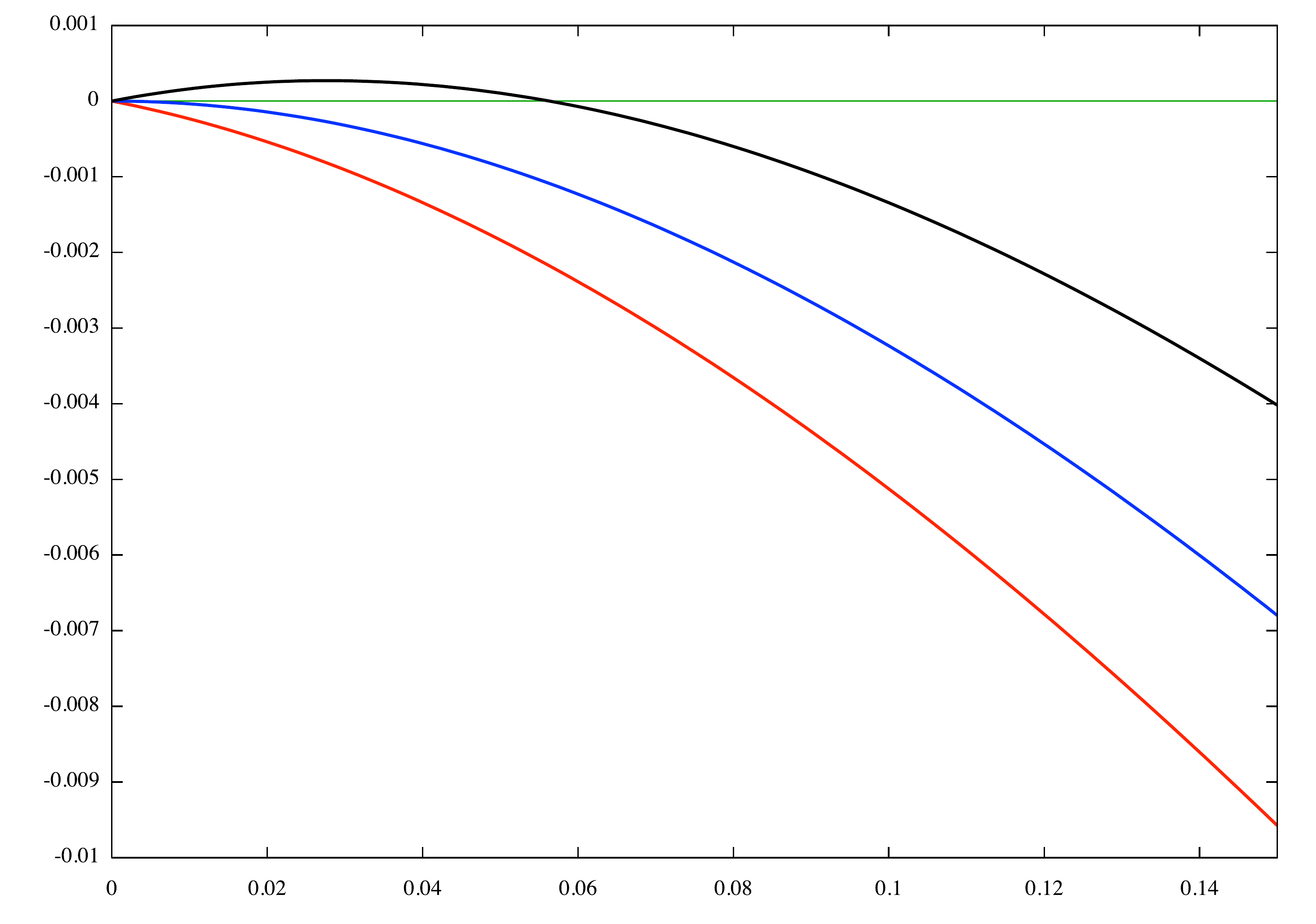}
\end{center}
\caption{
The function $F(H(x))-x$ for the family of distributions for \cref{exotic} (iii), with $\epsilon=-0.01$, $0$ and $0.01$.
For negative $\epsilon$ the function has only root, at 0, and the same is true
at the critical point $\epsilon=0$. As $\epsilon$ becomes positive
a new root emerges continuously from 0. }\label{cont-escape-figure}
\end{figure}
\begin{proof}[Proof of \cref{exotic} (ii)]
Let
\[
G(x)=(1-t)+t(0.15x+0.85x^{20});
\]
see \cref{cont-escape-figure}.
There are two phase transition points $t^-\approx 0.9877$
and $t^+\approx 0.99219$. When $t\leq t^-$, the equation
$F^2(x)=x$ has a single solution, and there are no draws.
At $t=t^-$ we see a continuous phase transition into a
region where the equation has three solutions and draws
occur; on $[t^-, t^+)$ the probability of a draw increases
continuously. Just below $t^+$ we have $N\approx 0.774$,
$P\approx 0.149$, $D\approx 0.077$.

At $t^+$ there is a discontinuous phase transition, and for
$t=t^+$ we have $N\approx 0.285$, $P\approx 0.020$,
$D\approx 0.695$. For $t>t^+$ there are seven solutions to
$F^2(x)=x$.
\end{proof}

\begin{proof}[Proof of \cref{exotic} (iii)]
Let
$$G(x)= \bigl(\tfrac1{18}-\epsilon\bigr)+\tfrac23
x+\bigl(\tfrac5{18}+\epsilon\bigr)x^3.$$
Note that
$p_1\mu>1$ if and only if $\epsilon>0$. At $\epsilon=0$,
the probability of escape is 0, but Proposition
\ref{p1muprop} tells us that for $\epsilon>0$ the
probability of escape must be positive. The function
$F(H(x))-x$ has $x=0$ as its only root for $\epsilon\leq0$,
but as $\epsilon$ becomes positive, the derivative of
$F(H(x))-x$ at $x=0$ moves from negative to positive, and a
second root emerges continuously from $0$. That is, $\ei>0$
for all $\epsilon>0$, with $\ei\to0$ as $\epsilon\to0$.
\end{proof}

The inequalities in \cref{ineq} will be proved in the next
section.  We conclude this section by giving examples
showing that no other inequalities hold in general.

\begin{proof}[Proof of Theorem \ref{ineq}, counterexamples]
We will give examples that rule out any inequality not listed in or
implied by Theorem \ref{ineq} (i)--(iii).

We start with a pair of trivial cases: if $p_0=1$ then
\begin{align*}
\p=\nm=\si=\sii&=1
>0=\n=\d=\pm=\dm=\ei=\eii,
\end{align*}
while if $p_0=0$ then
\begin{align*}
\d=\dm=\ei=\eii&=1
>0=\n=\p=\nm=\pm=\si=\sii.
\end{align*}

%Whenever the tree is sub-critical but not trivial,
%we have $\n>0$ and $\dm=0$, so that $\n>\dm$ is possible.

Another useful case is given by
$p_0=p_1=1/K$ and $p_{K^3}=1-2/K$, where $K$ is a large integer.
The following events hold with high probability as $K\to\infty$:
the root has $K^3$ children; at least one child of the root is a leaf;
at least one child of the root has exactly one child, which is a leaf.

As a result, the Next player wins both the normal and the mis\`ere games, and
Stopper wins the escape game when playing first, with high probability. Also,
since $p_1\mu\to \infty$, Escaper can win with high probability when playing
first, by arranging that Stopper never has any choice.  So we obtain that as
$K\to\infty$,
\begin{align*}
\n, \nm, \ei, \si&\to 1\\
\p, \d, \pm, \dm, \eii, \sii&\to 0.
\end{align*}

Next, in the case of binary branching in \cref{main} (i) with $p=p_2$ between
$\sqrt{3}/2=0.866\dots$ and $3/2^{5/3}=0.945\dots$, we have $\d>0$ while
$\ei=0$, so that $\d>\ei$ is possible.  An extreme case of the same example,
where we take $p=1-\epsilon$ with $\epsilon\to 0$, gives
\begin{align*}
\n&=2\epsilon+O(\epsilon^2)\\
\sii&=\epsilon+9\epsilon^2+O(\epsilon^3)\\
\p&=\epsilon+4\epsilon^2+O(\epsilon^3)\\
\nm&=\epsilon+O(\epsilon^3),
\end{align*}
so that $\n>\sii>\p>\nm$ is possible.

Finally, consider the example
$(p_0,p_1,p_2,p_3)=(\epsilon,\tfrac23,0,\tfrac13-\epsilon)$.  For
$\epsilon>0$ this has $D=\dm=0$, but for sufficiently small $\epsilon>0$ we
have $p_1\mu>1$, and therefore $\eii>0$ by \cref{cont}.  Thus $\eii>\dm$.  As an aside, we note that $D$ and $\dm$ both jump discontinuously to $1$ at $\epsilon=0$, because the tree has no leaves so the games cannot end).

%(In fact, note from this
%and an earlier example that one can have either positive
%probability of an escape with zero probability of a draw, or
%positive probability of a draw with zero probability of an escape).

It is straightforward to check that these examples show
that any inequality not ruled out by \cref{ineq} (i)--(iii)
may occur.
\end{proof}

\section{Continuity}
\label{sec:continuity}
In this section we prove \cref{cont}.

\begin{proof}[Proof of \cref{cont} (i)]
Recall that $N=N(\bp)$ is the increasing limit as $n\to\infty$ of $N_{2n}=H^{2n}(0)$.  But the latter is a continuous function of $\bp$ with respect to $d_0$ for each $n$.  Therefore $N$ is a lower semicontinuous function of $\bp$.  The same argument gives lower semicontinuity of $P,\nm,\pm,\si,\sii$.  Then $N+P$ is also lower semicontinuous, so $D=1-N-P$ is upper semicontinuous.  On $\{\bp:D=0\}$ we have $N=1-P$, so $N$ is upper and lower semicontinuous, hence continuous.  The same arguments apply to the mis\`ere game.
\end{proof}

The following simple observations will be useful for the proof of part (ii).

\begin{lemma}[Roots in pairs]\label{roots-in-pairs}
Let $\bp$ be any offspring distribution with $0<p_0<1$.  There is a unique fixed point $x^*$ of $F$ in $[0,1]$.  Besides $x^*$, all other fixed points of $F^2$ in $[0,1]$ can be partitioned into pairs of the form $\{x,F(x)\}$.  If $\bp$ is finitely supported (so that $F^2$ is a polynomial) and one element of such a pair is a \emph{repeated} root of $F^2(x)-x$, then so is the other.
\end{lemma}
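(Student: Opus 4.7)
The plan is to handle the three assertions of the lemma in sequence. For the uniqueness of the fixed point of $F$, I would observe that $F = 1 - G$ is non-increasing on $[0,1]$ (because $G$ is non-decreasing), with boundary values $F(0) = 1 - p_0 \in (0,1)$ and $F(1) = 0$. Hence the function $x \mapsto F(x) - x$ is strictly decreasing on $[0,1]$ (strictly, because of the $-x$ term) and changes sign, so it has exactly one zero $x^* \in (0,1)$.

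For the pairing of the remaining fixed points of $F^2$, I would argue via the natural involution. Suppose $x \in [0,1]$ satisfies $F^2(x) = x$ but $x \neq x^*$, so that $F(x) \neq x$. Set $y := F(x)$. Applying $F$ to both sides of $F^2(x) = x$ gives $F(y) = x$, and then $F^2(y) = F(x) = y$, so $y$ is again a fixed point of $F^2$. Moreover $y \neq x$, and the map $x \mapsto F(x)$ is an involution on $\fp(F^2) \setminus \{x^*\}$. This partitions that set into pairs of the form $\{x, F(x)\}$.

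For the final assertion, suppose $\bp$ has finite support, so $F^2(x) - x$ is a polynomial and a root $x_0$ is repeated exactly when the derivative of $F^2(x) - x$ vanishes at $x_0$, that is, when $(F^2)'(x_0) = 1$. Given a two-cycle $\{x, y\}$ with $y = F(x)$ and $x = F(y)$, the chain rule yields
\[
(F^2)'(x) = F'(F(x))\,F'(x) = F'(y)\,F'(x) = F'(x)\,F'(y) = F'(F(y))\,F'(y) = (F^2)'(y),
\]
so the repeated-root criterion $(F^2)'(\,\cdot\,) = 1$ holds at $x$ if and only if it holds at $y$. Hence repeated roots transfer across the pair. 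I do not foresee any real obstacle here: the lemma reduces to the involution $x \mapsto F(x)$ on the non-$x^*$ fixed points of $F^2$, combined with the one-line chain-rule symmetry of $(F^2)'$ along any two-cycle.
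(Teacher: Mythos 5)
Your proof is correct and follows essentially the same route as the paper's: uniqueness of $x^*$ from monotonicity and the sign change of $F(x)-x$, the pairing via the involution $x\mapsto F(x)$ on the remaining fixed points of $F^2$, and the transfer of repeated roots via the chain-rule identity $F'(F(x))F'(x)=F'(F(y))F'(y)$ along a two-cycle. The only point stated slightly more explicitly in the paper is that $F(x)\neq x^*$ for a fixed point $x\neq x^*$ (by injectivity of $F$, since $F(x^*)=x^*$), which is needed for your involution to act on $\fp(F^2)\setminus\{x^*\}$; this is immediate and does not affect correctness.
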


\begin{proof}
First note that $F(x)-x$ is positive at $0$, negative at $1$, and strictly decreasing on $[0,1]$, so $F$ has a unique fixed point $x^*$ in $[0,1]$.  Clearly $x^*$ is also a fixed point of $F^2$.  If $x$ is any fixed point of $F^2$ then so is $F(x)$, and if $x\neq x^*$ then $F(x)\neq F(x^*)=x^*$.  Moreover if $x\in[0,1]$ then $F(x)\in[0,1]$.  Finally, the derivative of $F^2(x)-x$ is $F'(F(x))F'(x)-1=:\Delta(x)$, say.  If $x$ is a repeated root of $F^2(x)-x$ then $\Delta(x)=0$, but this implies that $\Delta(F(x))=F'(x)F'(F(x))-1=0$ also.
\end{proof}

\begin{proof}[Proof of \cref{cont} (ii)]
We prove continuity of $D$; the proof for $\dm$ is essentially identical.
Let $\cQ:=\{\bp=(p_0,p_1,p_2): \sum_i p_i=1,\ p_0\in(0,1)\}$ be the relevant set of distributions.  Recall from \cref{main} that $D$ is the difference between the largest and smallest fixed points of $F^2$ (i.e.\ roots of $F^2(x)-x$) in $[0,1]$.

Suppose for a contradiction that $D$ is not continuous at $\bp\in\cQ$, so that there exists a continuous family $(\bp(t))_{t\in[0,1]}$ in $\cQ$ with $\bp(t)\to\bp(0)=\bp$ but $D(\bp(t))\not\to D(\bp)$ as $t\to 0$.  The complex roots of a polynomial vary continuously with its coefficients (possibly becoming or ceasing to be coincident, and going off to or arriving from infinity).  Therefore, either some root of $F_{\bp(t)}^2(x)-x$ must enter the interval $[0,1]$ at $t=0$, or some complex root must become real.

The first possibility is ruled out because $F_\bp^2(0)=F(1-p_0)\neq 0$ and $F_\bp^2(1)=1-p_0\neq 1$, so the polynomial $F_\bp^2(x)-x$ does not have roots at $0$ or $1$.  Turning to the second possibility, since the polynomial $F^2(x)-x$ has real coefficients, any non-real roots come in conjugate pairs, so such a pair must become coincident and real at $t=0$, so $F_\bp^2(x)-x$ has a repeated root in $[0,1]$.  Now recall \cref{roots-in-pairs}, and note that the special root $x^*=x^*(\bp(t))$ varies continuously with $t$.  It is possible for two roots to become coincident and real and simultaneously coincide with $x^*$ (as indeed happens in many cases), but this would not account for the discontinuity in $D$.  Hence the polynomial $F_\bp^2(x)-x$ must have a repeated root in $[0,1]$ that is \emph{not} at $x^*$.  But then by \cref{roots-in-pairs} it must have \emph{another} repeated root in $[0,1]$.  Hence there are at least $5$ roots in $[0,1]$, counted with multiplicity.  (Essentially, the picture must resemble Figure~\ref{discontinuousnormalfigure}.)  But $G_\bp$ is (at most) a quadratic, so $F_\bp^2(x)-x$ is (at most) a quartic, a contradiction.
\end{proof}

We break the proof of \cref{cont} (iii) into parts.

\begin{prop}[Forcing strategy]\label{p1muprop}
Consider the escape game, and let $\mu$ be the mean of the offspring distribution $\bp$.
If $p_1\mu > 1$, then $\ei>0$.
\end{prop}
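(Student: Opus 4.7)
The plan is to exhibit an explicit strategy under which Escaper wins with positive probability whenever $\mu p_1 > 1$. The guiding idea is that Escaper should funnel Stopper into vertices with exactly one child, so that Stopper's reply is forced and play continues indefinitely.

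First I would specify a partial strategy for Escaper: at any vertex $u$ where it is Escaper's turn, if some child $v$ of $u$ in $\T$ has exactly one child in $\T$, move to such a $v$ (chosen by any fixed measurable tie-breaking rule, for instance the first in a canonical labelling of $\T$); otherwise move arbitrarily. Whenever Escaper successfully invokes this rule at $u$, Stopper's reply is forced to the unique grandchild $w$ of $u$ via $v$, and it then becomes Escaper's turn at $w$. To measure how long the strategy can be maintained, I would define a random subtree $\T'\subseteq \T$ by declaring $o\in\T'$ and, inductively, for each $u\in\T'$, including in $\T'$ every grandchild $w$ of $u$ whose parent $v$ (a child of $u$) has exactly one child.

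The vertices of $\T'$ are confined to even generations of $\T$, and the subtrees of $\T$ rooted at distinct vertices of $\T'$ are disjoint and independent by the branching property. It follows that $\T'$ is itself a Galton-Watson tree, with offspring distribution equal to that of $\sum_{i=1}^{X}\mathbf{1}[Y_i=1]$, where $X$ and the $Y_i$ are independent random variables each with law $\bp$. By linearity and independence the mean offspring in $\T'$ equals $\mu\cdot p_1$, which exceeds $1$ by hypothesis, so $\T'$ is supercritical and thus infinite with positive probability.

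On the event that $\T'$ is infinite, fix an infinite ray $o=u_0,u_1,u_2,\dots$ in $\T'$. Along this ray Escaper's strategy never fails: at each $u_k$ she moves to the child $v_k$ of $u_k$ whose unique child in $\T$ is $u_{k+1}$, and Stopper is then forced to move to $u_{k+1}$. The resulting play is infinite, hence a win for Escaper. This gives $\ei \geq \P(\T'\text{ is infinite})>0$. The only bookkeeping step that needs care is the verification that $\T'$ is a genuine Galton-Watson tree with the stated offspring law, which follows directly from the independence of subtrees of $\T$ at different vertices; there is no significant obstacle beyond this.
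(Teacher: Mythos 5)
Your proof is correct and follows essentially the same route as the paper's own (combinatorial) argument: the paper likewise builds the subtree of paths whose odd-depth vertices have exactly one child, identifies the induced two-generation process as a Galton--Watson tree with offspring law Binomial$(M,p_1)$ for $M\sim\bp$ (your $\sum_{i=1}^{X}\mathbf{1}[Y_i=1]$), and lets Escaper play inside it so that Stopper never has a choice; the paper additionally records a one-line analytic proof via the derivative of $F(H(x))-x$ at $0$. The only cosmetic point is that your opening ``tie-breaking'' strategy is superseded by the ray-following strategy you actually use at the end, but the latter is well defined on the survival event and the argument is sound.
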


\begin{proof}
We give two explanations, one analytic and one in terms of the game.
First, $\ei$ is the largest solution in $[0,1]$ of $F(H(x))-x=0$.
The function $F(H(x))$ is continuous on $[0,1]$, with
$F(H(0))=0$ and $F(H(1))-1<0$.
Further one can calculate that the derivative of $F(H(x))-x$ at $x=0$
is $p_1\mu-1$. Hence if $p_1\mu>1$, there must be a solution
to $F(H(x))-x=0$ somewhere in $(0,1)$, and hence $\ei>0$.

For the alternative argument, consider the set of paths in the tree $\T$, starting at the root, with the property that every vertex at odd depth on the path has precisely one child.  The
union of these paths is a subtree $\T'$ containing the root.  Each odd-depth vertex of $\T'$ has exactly two neighbors: its parent and its unique child.  Let $\T''$ be the tree obtained by removing every odd-depth vertex from $\T'$ and joining its parent directly to its child.
Now $\T''$ is a Galton-Watson tree whose offspring distribution is the original distribution $\bp$ thinned by $p_1$ (i.e., conditional on a random variable $M$ distributed according to $\bp$, the number of offspring of a vertex is Binomial$(M,p_1)$).  Therefore if $p_1\mu>1$ then with positive probability $\T''$ is infinite.  On that event, Escaper can win the escape game on the original tree $\T$, provided he moves first, by always playing in $\T'$, so that Stopper never has any choice.
\end{proof}

\begin{prop}[Perturbation]\label{escapeptthm}
Let $\cS:=\{\bp: \ei=0\}$ be the set of distributions with zero probability of an Escaper win.
If $\bp\in\cS$ with $p_1\mu<1$, then $\cS$ also contains a neighbourhood of
$\bp$ in the metric space $(M_1,d_1)$.
\end{prop}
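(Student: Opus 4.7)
The plan is to reformulate the condition $\bp\in\cS$ as a strict inequality on $\phi_\bp(x):=F_\bp(H_\bp(x))-x$, and then preserve the inequality under $d_1$-perturbations. Note $\phi_\bp(0)=0$ always, and by \cref{main}(iii), $\ei(\bp)$ is the largest fixed point of $F_\bp\circ H_\bp$ in $[0,1]$, so $\bp\in\cS$ is equivalent to $\phi_\bp<0$ on $(0,1]$ (the case $p_0=0$ would force $\phi_\bp(1)=0$, so $\bp\in\cS$ implies $p_0>0$). A direct computation, also noted in the proof of \cref{p1muprop}, gives $\phi_\bp'(0)=\mu p_1-1$, so the hypothesis $p_1\mu<1$ yields $\phi_\bp'(0)<0$.

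Assuming $\bp\in\cS$ with $p_1\mu<1$, I would first use continuity of $\phi_\bp'$ on $[0,1]$ (valid for $\bp\in M_1$, with Abel's theorem handling the endpoint $x=1$) to pick $\delta>0$ and $c>0$ with $\phi_\bp'(x)\le -c$ on $[0,\delta]$. Strict negativity of $\phi_\bp$ on the compact set $[\delta,1]$ then yields $\epsilon>0$ with $\phi_\bp(x)\le-\epsilon$ there. The target is to preserve both bounds, up to a factor of two, under a sufficiently small $d_1$-perturbation.

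The key analytic input is that $d_1(\bp,\bq)\to 0$ yields uniform convergence of both $G_\bq$ and $G_\bq'$ to $G_\bp$ and $G_\bp'$ on $[0,1]$. This follows from
\[
\sup_{x\in[0,1]}|G_\bp'(x)-G_\bq'(x)|\le\sum_{i\ge 1}i|p_i-q_i|=d_1(\bp,\bq),
\]
together with $d_0\le 2d_1$ (using $|p_0-q_0|\le\sum_{i\ge 1}|p_i-q_i|$). Writing $\phi_\bq'(x)=G_\bq'(H_\bq(x))G_\bq'(x)-1$ and invoking uniform continuity of $G_\bp'$ on $[0,1]$, a standard composition argument then gives $\phi_\bq'\to\phi_\bp'$ and $\phi_\bq\to\phi_\bp$ uniformly on $[0,1]$ as $d_1(\bp,\bq)\to 0$.

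Hence for $\bq$ sufficiently close to $\bp$ in $d_1$ we have $\phi_\bq'\le-c/2$ on $[0,\delta]$ and $\phi_\bq\le-\epsilon/2$ on $[\delta,1]$. Since $\phi_\bq(0)=0$, integrating the first bound yields $\phi_\bq(x)\le-cx/2$ on $[0,\delta]$, so $\phi_\bq<0$ on $(0,\delta]$; combined with the second bound, $\phi_\bq<0$ on all of $(0,1]$, which gives $\bq\in\cS$. The main obstacle I expect is verifying uniform closeness of derivatives up to the endpoint $x=1$: because $H_\bp(0)=1$, the behavior of $G'$ at $1$ (i.e.\ the mean $\mu$) enters directly into $\phi_\bp'(0)$, and this is exactly why the metric $d_1$ rather than $d_0$ is required for the statement.
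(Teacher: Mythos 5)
Your proposal is correct and follows essentially the same route as the paper's own proof: reformulate membership in $\cS$ as strict negativity of $F(H(x))-x$ on $(0,1]$, use the hypothesis $p_1\mu<1$ to get a strictly negative derivative on a neighbourhood $[0,\delta]$ of the origin (where the function vanishes), bound the function away from zero on $[\delta,1]$, and show both bounds are stable under small $d_1$-perturbations via uniform convergence of $G$ and $G'$. Your write-up is in fact slightly more careful than the paper's in spelling out why $d_1$ controls $d_0$ and why $\mu<\infty$ is needed for continuity of the derivative at $x=0$.
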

\begin{proof}
A distribution is in $\cS$ if and only if there is no root of $F(H(x))-x$ in $(0,1]$.  (There is always a root at $0$.)
Let $\bp\in\cS$ with $p_1\mu<1$.

The derivative of $F(H(x))-x$ is $F'(H(x))H'(x)-1$, which equals $p_1\mu-1$ at $x=0$.
And by continuity of the generating function, the derivative converges to $p_1\mu-1$ as $x\downarrow 0$.
% $u>0$ small enough
%we can make all of $|1-H(x)|$, $|-\mu-F'(H(x))|$ and $|-p_1-H'(x)|$ as
%small as desired for every $x\in[0,u]$.
Let $\tbp\in\cS$ be another distribution with corresponding functions $\tF$ and $\tH$.
Then we have $|F(x)-\tF(x)|\leq d_1(\tbp, \bp)$ and $F'(x)-\tF'(x)|\leq d_1(\tbp, \bp)$ for all $x\in[0,1]$, and similarly for $H$ and $\tH$.

Putting these facts together, for any $\epsilon>0$,
there exist $u$ and $\delta_1$ such that if $x\in[0,u]$ and $d_1(\bp, \tbp)<\delta_1$
then
\[
\left| \tF'(\tH(x)) \tH'(x) - \mu_1 p \right| <\epsilon.
\]
Hence by choosing $\epsilon$ small enough, we have that the derivative
of $\tF(\tH(x))-x$ is negative on all of $[0,u]$. Since $\tF(\tH(0))-0=0$,
it follows that $\tF(\tH(x))-x$ has no roots on $(0,u]$.

Now $F(H(x))-x$ is negative on all of $[u,1]$, and so (by uniform continuity
on closed intervals) is bounded away from 0 on that interval.
We have $|\tF(\tH(x))-F(H(x))| \leq d_1(\tbp, \bp)^2$.
So we can find $\delta_2>0$ such that
if $d_1(\bp, \tbp)<\delta_2$ then $\tF(\tH(x))-x$ has no roots on $[u,1]$.

Taking $\delta=\min(\delta_1, \delta_2)$, we find that if $d_1(\bp, \tbp)<\delta$
then $\tbp\in\cS$, as required.
\end{proof}

\begin{proof}[Proof of \cref{cont} (iii)]
This is immediate from \cref{p1muprop,escapeptthm}.
\end{proof}

%%%%%%%%%%%%%%%%%%%%%%%%%%%%%%%%%%%%%%%%%%%%%%%%%

\section{Length of the game}\label{sec:length}

In this section we prove Theorem \ref{thm:length}.
Initially we write the proof for the case of the normal game,
and indicate the analogous argument for the case of the
mis\`ere game at the end.

The function $F$ is strictly decreasing with $F(0)>0$
and $F(1)=0$, so has a unique fixed point. We begin by considering
the derivative of $F$ and related functions at this fixed point.
\begin{lemma}\label{lemma:Dcriteria}
Let $x^*$ be the unique fixed point of $F$.
\begin{itemize}
\item[(a)]If $\bp\in\cB$, then $F'(x^*)\leq -1$.
\end{itemize}
More precisely:
\begin{itemize}
\item[(b)]If $\bp\in\cB^o$, then $F'(x^*)<-1$;
\item[(c)]If $\bp\in\cB\intersect\partial\cB$, then $F'(x^*)=-1$.
\end{itemize}
\end{lemma}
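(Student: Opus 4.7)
The stated inequalities appear to have their direction reversed: under the conventions $F'\le 0$ and $(F^2)'(x^*)=F'(x^*)^2\ge 0$, a sign analysis forces the opposite bound, namely $F'(x^*)\ge -1$ on $\cB$, with strict inequality on $\cB^o$ and equality on $\cB\intersect\partial\cB$. This matches \cref{squarelemma} for the mis\`ere game and the explicit computations in \cref{examples} (for instance, for Poisson offspring, $F'(x_0)=-1$ precisely at $\lambda=e$). Below I sketch a proof of this corrected form. The plan is: a sign analysis of $g(x):=F^2(x)-x$ for (a); an approximation from outside $\cB$ for (c); and a mass-transfer perturbation for (b).

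\textbf{Part (a).} Assume $\bp\in\cB$; by \cref{main}(i) the function $F^2$ has a unique fixed point in $[0,1]$, necessarily $x^*$ (since $F(x^*)=x^*$ implies $F^2(x^*)=x^*$). The smooth function $g(x):=F^2(x)-x$ satisfies $g(0)=F(1-p_0)\ge 0$ and $g(1)=F(0)-1=-p_0\le 0$, with $x^*$ as its only zero on $[0,1]$. By continuity, $g\ge 0$ on $[0,x^*]$ and $g\le 0$ on $[x^*,1]$, forcing $g'(x^*)\le 0$, i.e.\ $F'(x^*)^2\le 1$. Since $F'(x^*)\le 0$, this gives $F'(x^*)\ge -1$.

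\textbf{Part (c).} Pick $\bp_n\to\bp$ in $d_0$ with $\bp_n\notin\cB$. For each $n$, \cref{roots-in-pairs} yields fixed points $x_-^n<x^*(\bp_n)<x_+^n$ of $F_{\bp_n}^2$, so $g_n:=F_{\bp_n}^2-\mathrm{id}$ is $\le 0$ just to the left of $x^*(\bp_n)$ and $\ge 0$ just to the right, whence $g_n'(x^*(\bp_n))\ge 0$, i.e.\ $F'_{\bp_n}(x^*(\bp_n))\le -1$. The map $\bp\mapsto F'_\bp(x^*(\bp))$ is continuous in $d_0$: indeed $\|F_{\bp_n}-F_\bp\|_\infty\le d_0(\bp_n,\bp)$, so $x^*(\bp_n)\to x^*(\bp)<1$; and for any $r<1$ the quantity $\sup_i ir^{i-1}$ is finite, so $F'$ converges uniformly on $[0,r]$ along $d_0$-convergent sequences. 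Passing to the limit yields $F'_\bp(x^*)\le -1$, and combined with (a) this gives $F'(x^*)=-1$.

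\textbf{Part (b) and main obstacle.} Assume $\bp\in\cB^o$ and, for contradiction, $F'(x^*)=-1$ (the only case left open by (a)). The degenerate cases are excluded: $p_0=1$ gives $F\equiv 0$ with $F'(x^*)=0$, and $p_0=0$ gives $D=1\notin\cB$; so $0<p_0<1$ and $x^*\in(0,1)$. Fix any $k\ge 2$ and transfer mass $\epsilon>0$ from $p_0$ to $p_k$; the perturbed $F_\epsilon(x)=F(x)+\epsilon(1-x^k)$ has fixed point $x^*_\epsilon=x^*+\tfrac12(1-(x^*)^k)\epsilon+O(\epsilon^2)$ by implicit differentiation using $F'(x^*)=-1$, and a first-order Taylor expansion gives
\[
F'_\epsilon(x^*_\epsilon)=-1+\left[\tfrac12 F''(x^*)(1-(x^*)^k)-k(x^*)^{k-1}\right]\epsilon+O(\epsilon^2).
\]
The bracket is a sum of two nonpositive terms (since $F''\le 0$ and $x^*\le 1$) of which the second is strictly negative (since $x^*>0$ and $k\ge 2$), so $F'_{\bp_\epsilon}(x^*_\epsilon)<-1$ for all small $\epsilon>0$. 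By the contrapositive of (a), $\bp_\epsilon\notin\cB$, contradicting $\bp\in\cB^o$. The only nontrivial step is verifying the sign of this $O(\epsilon)$ coefficient and ruling out the degenerate cases; everything else reduces to a short sign argument on the polynomial $F^2-\mathrm{id}$.
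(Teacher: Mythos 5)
You are right that the inequalities in the statement are reversed: for the binary branching family with $t=1/2$ one has $D=0$ but $F'(x^*)=1-\sqrt2>-1$, and the paper itself later invokes the lemma in the form $|F'(x^*)|<1$ for $\bp\in\cB^o$ (in the proof of Theorem \ref{thm:length}(i)), so the intended conclusions are indeed $F'(x^*)\ge-1$, $>-1$ and $=-1$ respectively. Your part (a) is essentially the paper's argument, stated as a direct sign analysis rather than as a contrapositive. Your part (b) takes a genuinely different route: the paper perturbs by mixing $\bp$ with a distribution $\hat{\bp}$ chosen (Lemma \ref{lemma:usefulp}) so that the fixed point $x^*$ does not move while the slope there becomes strictly less than $-1$, which avoids implicit differentiation entirely; your mass transfer from $p_0$ to $p_k$ moves the fixed point, but the first-order computation is correct (note $F''(x^*)$ is automatically finite because $x^*<1$), the sign of the bracket is verified correctly, and the degenerate cases $p_0\in\{0,1\}$ are properly excluded. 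Either version works.

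The gap is in part (c). From $\bp_n\notin\cB$ you conclude that $g_n:=F_{\bp_n}^2-\mathrm{id}$ is $\le0$ just to the left of $x^*(\bp_n)$ and $\ge0$ just to the right, hence $g_n'(x^*(\bp_n))\ge0$. This is false in general: Lemma \ref{roots-in-pairs} only guarantees at least one extra fixed point on each side of $x^*(\bp_n)$, and when there are two (or any even number of) simple extra roots on each side --- as happens for the family of Proposition \ref{exotic}(i) just above its critical point, where $F^2(x)-x$ has five roots --- the sign of $g_n$ alternates an even number of times on each side, $g_n$ crosses from positive to negative at $x^*(\bp_n)$, and your inequality points the wrong way. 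The repair is short: since $g_{\bp}$ has $x^*$ as its unique zero and $g_{\bp_n}\to g_{\bp}$ uniformly on $[0,1]$, every fixed point of $F_{\bp_n}^2$ converges to $x^*$; applying Rolle between $x_-^n<x^*(\bp_n)<x_+^n$ gives points $\xi_n\to x^*$ with $g_n'(\xi_n)=0$, and the local uniform convergence of $g_n'$ that you already use (valid since $x^*<1$) then yields $g'(x^*)=0$, i.e.\ $F'(x^*)=-1$. Alternatively, the paper reaches (c) without approximating from outside $\cB$, by proving the stability statement that $\bp\in\cB$ together with $F'(x^*)>-1$ forces $\bp\in\cB^o$, and combining this with part (a).
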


Note that since $F(x)=1-G(x)$, we have $F'(x)=-G'(x)$.
We can also rewrite $F'(x^*)$ in terms of the function
$F^2(x)-x$ which we plotted for example in Figure
\ref{binarynormalfigure} and
Figure \ref{discontinuousnormalfigure}.
Writing $\Delta(x)=\frac{d}{dx}(F^2(x)-x)$
as in the proof of Lemma \ref{roots-in-pairs}
we have $\Delta(x^*)=F'(F(x^*))F'(x^*)-1=F'(x^*)^2-1$.
Hence
\begin{align}
\label{FDeltaequivalence}
F'(x^*)<-1 \,\, &\Leftrightarrow \,\, \Delta(x^*)>0\\
\nonumber
F'(x^*)=-1 \,\, &\Leftrightarrow \,\, \Delta(x^*)=0
\end{align}

Before proving Lemma \ref{lemma:Dcriteria},
we note a useful technical property:
\begin{lemma}\label{lemma:usefulp}
Let $x^*\in(1/2,1)$. Then there is an offspring distribution
$\hat{\bp}$ with generating function $\hat{G}$ satisfying
$\hat{G}(x^*)=1-x^*$ and $\hat{G}'(x^*)>1$.
\end{lemma}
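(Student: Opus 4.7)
The plan is to construct $\hat{\bp}$ explicitly, supported on at most two consecutive positive integers. Define
\[
k_0 := \frac{\log(1-x^*)}{\log x^*},
\]
the unique positive real with $(x^*)^{k_0}=1-x^*$. Since $x^*\in(1/2,1)$ gives $1-x^*<x^*$, we have $k_0>1$; moreover the entropy-type inequality $-(1-x^*)\log(1-x^*)>-x^*\log x^*$, valid on $(1/2,1)$ by a short analysis of $t\mapsto -t\log t$ on $(0,1)$, rearranges to $k_0(1-x^*)/x^*>1$. This is exactly what the formal expression $x^{k_0}$ would give for its derivative at $x^*$, but $k_0$ need not be an integer.

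If $k_0$ is a positive integer, simply take $\hat{G}(x)=x^{k_0}$ and conclude. Otherwise let $k:=\lfloor k_0\rfloor\geq 1$ and seek $\hat{G}(x)=\hat{p}_k x^k+\hat{p}_{k+1}x^{k+1}$, where the two linear conditions $\hat{p}_k+\hat{p}_{k+1}=1$ and $\hat{G}(x^*)=1-x^*$ determine the probabilities uniquely. The strict inequalities $k<k_0<k+1$ guarantee $\hat{p}_k,\hat{p}_{k+1}\in(0,1)$, making $\hat{\bp}$ a valid offspring distribution. Substituting these probabilities into $\hat{G}'(x^*)=k\hat{p}_k(x^*)^{k-1}+(k+1)\hat{p}_{k+1}(x^*)^k$ and clearing denominators shows that $\hat{G}'(x^*)>1$ is equivalent to
\[
(x^*)^{k+1}+k(1-x^*)^2>2x^*(1-x^*).
\]
Applying AM-GM to the two terms on the left gives the lower bound $2x^*(1-x^*)\sqrt{k(x^*)^{k-1}}$, reducing the claim to showing $k(x^*)^{k-1}\geq 1$ together with strictness in at least one of the two inequalities.

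For $k=1$ the quantity $k(x^*)^{k-1}$ equals $1$ exactly, but AM-GM is strict since $(x^*)^2\neq(1-x^*)^2$ whenever $x^*\neq 1/2$, which supplies the needed gap. For $k\geq 2$, the condition $\lfloor k_0(x^*)\rfloor=k$ forces $x^*\geq u_k$, where $u_k\in(1/2,1)$ is the unique positive root of $u^k=1-u$; since $u\mapsto ku^{k-1}$ is non-decreasing on $(0,1)$, it suffices to check $ku_k^{k-1}=k(1-u_k)/u_k>1$, which is precisely the entropy inequality from the first paragraph applied at $u_k$ (using $k_0(u_k)=k$ and $u_k>1/2$). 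The main obstacle is this careful bookkeeping of strictness: the AM-GM margin vanishes at $x^*=1/2$ in the $k=1$ case and at each transition $x^*=u_k$ in the $k\geq 2$ cases, so one must verify that the strict hypothesis $x^*\in(1/2,1)$, combined with the strictness of AM-GM or of the monotonicity bound, leaves a positive gap in every regime.
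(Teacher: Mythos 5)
Your proof is correct, but it takes a genuinely different route from the paper's. The paper also uses a two-atom distribution, but anchors one atom at degree $1$: it picks the unique $k\geq 2$ with $(x^*)^k<1-x^*\leq (x^*)^{k-1}$ and sets $\hat{G}(x)=(1-q)x+qx^k$, solving for $q\in(0,1)$ from the value condition. The payoff of that choice is that $\hat{G}'(x^*)=(1-q)+qk(x^*)^{k-1}$ is a convex combination of $1$ and $k(x^*)^{k-1}$, so the whole problem collapses to the single inequality $k(x^*)^{k-1}>1$, which the paper gets from a short Bernoulli-type estimate. You instead straddle the ``ideal'' exponent $k_0=\log(1-x^*)/\log x^*$ with the two consecutive powers $x^{\lfloor k_0\rfloor}, x^{\lfloor k_0\rfloor+1}$ (or use $x^{k_0}$ outright when $k_0\in\mathbb{Z}$); this is arguably the more natural construction, and your identification of $k_0(1-x^*)/x^*>1$ with the entropy inequality $-(1-t)\log(1-t)>-t\log t$ on $(1/2,1)$ is a nice conceptual explanation of why the lemma is true at all. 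The cost is the heavier bookkeeping: I checked that your reduction of $\hat{G}'(x^*)>1$ to $(x^*)^{k+1}+k(1-x^*)^2>2x^*(1-x^*)$ is algebraically correct, that the AM--GM step reduces this to $k(x^*)^{k-1}\geq 1$ with a strict inequality somewhere, and that your case analysis ($k=1$ via strictness of AM--GM at $x^*\neq 1/2$; $k\geq 2$ via monotonicity of $k_0$ and of $u\mapsto ku^{k-1}$, together with $u_k>1/2$ and the entropy inequality at $u_k$) closes every case. Both constructions produce valid offspring distributions suitable for the perturbation argument in which the lemma is used; the paper's is shorter, yours is more self-explanatory about where the threshold $x^*>1/2$ enters.
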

\begin{proof}
We have
\begin{equation}\label{line1}
x^*>1-x^*,
\end{equation}
but $(x^*)^k <1-x^*$ for sufficiently large $k$.
Hence there is $k\geq 2$ such that
\begin{align}
\label{linekminus}
(x^*)^{k-1}&\geq 1-x^*\\
\intertext{and}
\label{linek}
(x^*)^{k}&< 1-x^*.
\end{align}

Then from (\ref{line1}) and (\ref{linek}),
for some $q\in(0,1)$,
the generating function $\hat{G}(x)=(1-q)x+qx^k$
(corresponding to the distribution $\hat{\bp}$ with $\hat{p}_1=1-q$
and $\hat{p}_k=q$)
has
$\hat{G}(x^*)=1-x^*$.

Also (\ref{linekminus}) gives
\begin{align*}
x^*&\geq 1-(x^*)^{k-1},
\\
\intertext{so that}
(x^*)^{k-1}&\geq \Big[1-(x^*)^{k-1}\Big]^{k-1}\\
&> 1-(k-1)(x^*)^{k-1},
\end{align*}
and so
$k(x^*)^{k-1}>1$.
Then $\hat{G}'(x^*)=1-q+qk(x^*)^{k-1}>1$ as required.
\end{proof}

\begin{proof}[Proof of Lemma \ref{lemma:Dcriteria}]
The proof of part (a) is very easy.
Note that the function $F^2(x)-x$ is positive at $x=0$,
is negative at $x=1$, and is zero at $x=x^*$.
If in addition $F'(x^*)>-1$ then by (\ref{FDeltaequivalence}),
$F^2(x)-x$ crosses from negative to positive at $x=x^*$,
and so must have at least one fixed point in $(0,x^*)$ and
another in $(x^*, 1)$. Then by Theorem \ref{main}, $D>0$.
Hence if $\bp\in\cB$ (i.e.\ if $D=0$) we must indeed have
$F'(x^*)\leq -1$.

For part (b),
suppose indeed that $F'(x^*)=-1$, i.e.\ $G'(x^*)=1$.
We will show that
$\bp$ is not in $\cB^o$, by showing
that there are points of $\cB^c$ arbitrarily close to $\bp$.

First note that we must have $x^*>1/2$
(excluding the trivial case $G(x)\equiv x$, i.e. $p_1=1$,
where $x^*=1/2$), since by strict convexity of $G$,
\begin{align*}
1&=G(1)\\
&> G(x^*)+(1-x^*)G'(x^*)\\
&=1-x^*+(1-x^*)\times 1\\
&=2(1-x^*).
\end{align*}
So from Lemma \ref{lemma:usefulp}, there is an offpsring distribution $\hat{\bp}$ whose generating function
$\hat{G}$ has
$\hat{G}(x^*)=1-x^*$ and $\hat{G}'(x^*)>1$.
Then for any $\epsilon>0$, the
distribution $\bp_{\epsilon}
:=(1-\epsilon)\bp+\epsilon\hat{\bp}$
with generating function
\begin{equation}\label{Gepsilon}
G_\epsilon(x)=(1-\epsilon)G(x)+\epsilon\hat{G}(x)
\end{equation}
also has $G_\epsilon(x^*)=1-x^*$ and $G_\epsilon'(x^*)>1$.
Hence by part (a),
for all $\epsilon$, $\bp_\epsilon\notin\cB$.
But since $\bp_\epsilon$ is arbitrarily close to $\bp$
in $M_0$, we have that $\bp\notin\cB^0$, as required for part (b).

Finally for part (c), suppose that $\bp\in\cB$
with $F'(x^*)>-1$. We need to show that $\bp\in\cB^o$,
i.e.\ that all distributions in some neighbourhood of $\bp$ in $M_0$ also have no draws.

The function $F(F(x))-x$ has a unique zero at $x^*$,
and has derivative $\Delta(x)=F'(F(x))F'(x)-1$ which is continuous on $(0,1)$
with $\Delta(x^*)<0$, as at using (\ref{FDeltaequivalence}).
Hence for some $\epsilon>0$,
\begin{equation}\label{FF1}
\frac{d}{dx}\big(F(F(x))-x\big)<-\epsilon \text{ for all }
x\in [x^*-\epsilon, x^*+\epsilon].
\end{equation}
Also $F(F(x))-x$ is a continuous function and so attains its
bounds on any closed interval; hence for some $\delta>0$,
\begin{equation}\label{FF2}
\big| F(F(x))-x \big| > \delta \text{ for all }
x\in [0,x^*-\epsilon/2]\union[x^*+\epsilon/2,1].
\end{equation}
We want to show that properties like (\ref{FF1}) and (\ref{FF2})
continue to hold if we perturb $\bp$ slightly.

We note the following properties:
\begin{itemize}
\item[(i)]
$F$ is uniformly continuous on $[0,1]$.
\item[(ii)]
For any $x$,
the quantity $F(x)$ is continuous as a function of $\bp$,
uniformly in $x$; specifically, for all $\bp$, $\tildebp$, and $x$,
\[
\Big|F_{\bp}(x)-F_{\tildebp}(x)\Big|\leq d_0(\bp, \tildebp).
\]
\end{itemize}
Combining (i) and (ii) with (\ref{FF2}), it follows
that whenever $d_0(\bp,\tildebp)$ is sufficiently small,
(\ref{FF2}) again holds with $F$ replaced by $F_{\tildebp}$
and $\delta$ by $\delta/2$.

Continuing, note that:
\begin{itemize}
\item[(iii)]
The function $F$ maps $[x^*-\epsilon/2, x^*+\epsilon/2]$ to some $[a,b]$ with $0<a<b<1$.
\item[(iv)]
$F'$ is uniformly continuous on $[0,z]$, for any $z<1$;
specifically, for all $0<x<y$,
\[
\big|F'(x)-F'(y)\big|\leq \sum_{n=2}^\infty n(y^{n-1}-x^{n-1})
\leq |y-x|\sum_{n=2}^\infty n^2 z^{n-2}.
\]
\item[(v)]
For any given $x$, $F'(x)$ is continuous as a function of
$\bp$; specifically, for all $\bp$, $\bp'$, and $x$,
\[
\big|F_{\bp}'(x)-F_{\tildebp}'(x)\big|\leq  d_0(\bp, \tildebp)\sum_{n=2}^\infty nx^{n-1}.
\]
\end{itemize}
Combining (i)-(v) with (\ref{FF1}),
and using $\frac{d}{dx}(F(F(x))-x)=F'(F(x))F'(x)-1$,
it follows
that whenever $d_0(\bp, \tildebp)$ is sufficiently small,
(\ref{FF1}) holds
with $F$ replaced by $F_{\tildebp}$ and
$\epsilon$ replaced by $\epsilon/2$ throughout.

The new versions of (\ref{FF1}) and (\ref{FF2}) thus obtained then guarantee that for all $\tildebp$ in some
neighbourhood of $\bp$ in $M_0$,
the function $F^2_{\tildebp}$ has no fixed point outside
$[x^*-\epsilon/2, x^*+\epsilon/2]$, and has at most
one fixed point inside that interval. Hence by Theorem
\ref{main}, the game with distribution $\bp'$ has no draws.
This shows that $\bp$ is in the interior of $\cB$, as required
for (c).
\end{proof}

\begin{proof}[Proof of Theorem \ref{thm:length}(i)]
We wish to show that if $\bp\in\cB^{o}$, then $\E T<\infty$
(and then certainly $\E T^*<\infty$ also since $T^*\leq T$).

Note that $\P(T>n)$ is the probability that neither player
can force a win within $n$ moves, which is $D_n$. Hence
\begin{equation}\label{ET}
\E T=\sum_{n\geq0}\P(T>n)=\sum_{n\geq0} D_n=\sum_{n\geq0} \big[(1-P_n)-N_n].
\end{equation}
Any game won by the first player has odd length, and any
game won by the second player has even length.
Then as in the proof of Theorem \ref{main}, we have
\[
1-P_{2k-1}=1-P_{2k}=F^{2k}(1)
\text{\,\,\, and \,\,\,}
N_{2k}=N_{2k+1}=F^{2k}(0).
\]
Since $\bp\in\cB$, Theorem \ref{main} gives that $F^2$
has a unique fixed point which is $x^*$,
and since $\bp\in\cB^o$, Lemma
\ref{lemma:Dcriteria} gives that $|F'(x^*)|<1$.
So both $1-P_n$ and $N_n$ converge
exponentially quickly to $x^*$.
Hence the sequence $D_n=1-P_n-N_n$ has finite sum,
and (\ref{ET}) gives $\E T<\infty$ as required.
\end{proof}

The following simple result must be well known, but
we don't have a precise reference:
\begin{lemma}\label{lemma:GWheight}
A Galton-Watson process with mean offspring size $1$
has infinite expected height.
\end{lemma}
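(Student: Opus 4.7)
My plan is to express the expected height as a sum of one-point extinction probabilities and then apply the classical Kolmogorov estimate for critical Galton-Watson processes. Let $Z_n$ denote the size of generation $n$, and let $q_n := \P(Z_n > 0)$. Since the event that the tree has height at least $n$ coincides with $\{Z_n > 0\}$, the expected height equals $\sum_{n \geq 1} q_n$, so it suffices to prove $\sum_n q_n = \infty$.

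If $p_1 = 1$ the tree is deterministically an infinite ray and the height is $\infty$, so assume $p_1 < 1$. Then $G$ is strictly convex on $[0,1]$ with $G(1) = 1$ and $G'(1) = \mu = 1$, so $1$ is the unique fixed point of $G$ in $[0,1]$; the standard recursion $q_{n+1} = 1 - G(1-q_n)$ (with $q_0 = 1$) gives a strictly decreasing sequence with $q_n \downarrow 0$. Writing $\sigma^2 := G''(1)$, which equals the variance of the offspring distribution since $G'(1) = 1$, Taylor's theorem at $s = 1$ yields $G(1-t) = 1 - t + (\sigma^2/2) t^2 + o(t^2)$ as $t \downarrow 0$, so $q_{n+1} = q_n - (\sigma^2/2) q_n^2 + o(q_n^2)$. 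Taking reciprocals gives $1/q_{n+1} - 1/q_n = \sigma^2/2 + o(1) \to \sigma^2/2$, and summing yields $1/q_n \sim n \sigma^2 / 2$, i.e.\ $q_n \sim 2/(n \sigma^2)$. This is the classical Kolmogorov estimate; since $2/(n\sigma^2)$ is not summable, $\sum q_n = \infty$ and the expected height is infinite.

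The main obstacle is the case $\sigma^2 = \infty$, where the second-order Taylor expansion above is not valid. One approach would be to analyze the integral representation $q_n - q_{n+1} = \int_{1-q_n}^1 (1 - G'(s))\, ds$ directly, using $1 - G'(s) \to 0$ as $s \to 1$ together with monotonicity and concavity of $F = 1-G$ to show $q_n$ cannot decay so fast that $\sum q_n$ converges. Another is to couple the tree with a subtree having a finite-variance critical offspring distribution (obtained, say, by truncating offspring counts at some large $K$ and compensating elsewhere to preserve the critical mean) and apply the above Kolmogorov argument to the subtree. For the distributions used in this paper, which typically have finite support, finite variance is automatic and the argument above suffices without further work.
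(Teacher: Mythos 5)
Your proof takes essentially the same route as the paper's: both write the expected height as $\sum_n \P(Z_n>0)$, use the recursion $q_{n+1}=1-G(1-q_n)$, and expand $G$ to second order at $1$. The paper stops at the one-sided bound $q_n-q_{n+1}=O(q_n^2)$ and concludes via a divergent infinite product, whereas you push through to the full Kolmogorov asymptotic $q_n\sim 2/(n\sigma^2)$ by taking reciprocals; either way the argument is correct exactly when $G''(1)<\infty$. Your instinct to flag the infinite-variance case is sound, but the situation is sharper than you suggest: the lemma is genuinely \emph{false} without a second-moment hypothesis. By Slack's theorem, a critical offspring law with $G(s)=s+(1-s)^{1+\alpha}L(1/(1-s))$ for $\alpha\in(0,1)$ and $L$ slowly varying has $\P(Z_n>0)$ of order $n^{-1/\alpha}$, which is summable, so the expected height is finite. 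Consequently neither of your proposed repairs can succeed; in particular, a truncated law that is re-balanced to remain critical has the same mean as the original and therefore cannot be stochastically dominated by it, so no subtree coupling exists. The paper's own proof carries the identical unstated assumption --- the step asserting $G(x)=1-(1-x)+O((1-x)^2)$ is precisely the claim $G''(1)<\infty$ --- but this is harmless where the lemma is invoked: the collapsed reduced tree in the proof of Theorem \ref{thm:length}(ii) has offspring law proportional to $p_kN^k$ with $N<1$, hence exponential tails and finite variance. So your argument, restricted to the finite-variance case as you note, is as complete as the paper's, and your explicit acknowledgement of the restriction is a point in its favour.
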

\begin{proof}
Let $V$ be the height of the process, and
$a_n=P(V\geq n)$ the probability that the process
survives at least to height $n$,
so that $\E V=\sum a_n$.
Then for example by
conditioning on the number of children of the root in
a standard way, we have $a_{n+1}=1-G(1-a_n)$.

Note that $G'(1)=1$, so that as $x\uparrow 1$, Taylor's
theorem gives
\[
G(x)=1-(1-x)+O(1-x)^2,
\]
so that as $y\downarrow 0$,
\[
1-G(1-y)-y=O(y^2).
\]
As $n\to\infty$ we have $a_n\to0$, and so
\[
a_{n+1}-a_n=O(a_n^2).
\]
In particular, for some constant $c$, for large enough $n$
(say $n\geq n_0)$
\begin{equation}\label{recurrencebound}
a_{n+1}>(1-ca_n)a_n.
\end{equation}
Since $a_n\to 0$, it follows from (\ref{recurrencebound})
that $\prod_{n=n_0}^\infty (1-ca_n)=0$.
But this is equivalent to $\sum a_n=\infty$.
Hence $\E V=\infty$, as required.
\end{proof}

\begin{proof}[Proof of Theorem \ref{thm:length}(ii)]
We assume $\bp\in\cB\intersect\partial\cB$.
So the probability of a draw is $0$, and
$N$, the probability of a first-player win,
is equal to $x^*$, the unique fixed point of $F$.
Also, by Lemma \ref{lemma:Dcriteria}, $G'(N)=-F'(N)=1$.

We may mark each node of the tree as an $\cN$-node
(a first-player win), or a $\cP$-node
(a second-player win). The root is an $\cN$-node with probability $N$ and a $\cP$-node with probability $P=1-N$.

With these marks we can see the tree as a two-type
branching process. Each $\cP$-node has only $\cN$-type
children. Conditional on being a $\cP$-node, the
number of children has probability mass function
$\tp_k, k\geq 0$ given by $\tp_k=p_k N^k/P$, with mean
\begin{equation}
\label{meanNchildren}
\tilde{\mu}=\sum k p_k N^k/P = NG'(N)/P.
\end{equation}
Each $\cN$-node has at least one $\cP$-type child.
Conditional on being a $\cN$-node, the probability
of having precisely one $\cP$-type child is
$\hat{p}_1$ given by
\begin{equation}
\label{hatp1}
\hat{p}_1=\sum_{j=1}^\infty p_j jPN^{j-1}/N = PG'(N)/N.
\end{equation}

Now we define a reduced subtree. Call an $\cN$-node
bad if it is the child of another $\cN$-node.
(Such a node is never part of an optimal line of play.)
Call a $\cP$-node bad if it is the child of a node which
has another $\cP$-type child. (The winning player can
guarantee to win without visiting this node.)

Remove all the bad nodes, and consider the reduced tree
consisting of all those nodes still connected to the root.
A node $v$ is in the reduced tree if the player without a winning strategy can guarantee either to win
or to visit $v$ (as in the definition of the quantity $T^*$).
In particular, $T^*$ is the height of this reduced tree.

The reduced tree is a two-type Galton-Watson process.
Suppose that the root is a $\cP$-node. Then all
the nodes at even levels are $\cP$-nodes, and all the nodes
at odd levels are $\cN$-nodes.
The expected number of grandchildren of the root
in this reduced tree is the product
$\tilde{\mu}\hat{p}_1$
of (\ref{meanNchildren})
and (\ref{hatp1}); this product is $G'(N)^2$ which
equals $1$. If we consider only the nodes at even levels,
we obtain a simple Galton-Watson process, with mean
offspring size $1$, and hence (by Lemma \ref{lemma:GWheight})
with infinite expected height. This
gives $\E T^*=\infty$ as required.
\end{proof}

\begin{proof}[Proof of Theorem \ref{thm:length}(iii) and (iv)]
As at (\ref{ET}), $\E T=\sum_{n\geq 0} D_n$,
where $D_n$ is the probability that neither player can force
a win within $n$ turns.

Suppose that $\bp^{(m)}, m\geq 1$
is a sequence of offspring distributions
in $\cB$
converging in $M_0$ as $m\to\infty$ to a
distribution $\bp^{(\infty)}\in\partial\cB$.
Write $\E^{(m)}$ and $\E^{(\infty)}$
for expectations in the models corresponding to $\bp^{(m)}$ and $\bp^{(\infty)}$ respectively, and similarly $D^{(m)}_n$ and $D^{(\infty)}_n$ for the draw probabilities.

We have $E^{(\infty)} T=\sum_{n\geq 0} D^{(\infty)}_n=\infty$
(this follows from
Theorem \ref{thm:length}(ii) in the case $\bp^{(\infty)}\in
\cB\intersect\partial\cB$, and from the fact that
$D^{(\infty)}_n\not\to0$ in the case $\bp^{(\infty)}\in\cB
\setminus\partial\cB$).

For any fixed $n$ we have $D_n^{(m)}\to D_n^{(\infty)}$ as $m\to\infty$,
since the distribution of the first $n$ levels of the tree under
$\bp^{(m)}$ converges in total variation distance to
the distribution under $\bp^{(\infty)}$.
So $\liminf_{m\to\infty}\sum_{n=0}^\infty D_n^{(m)}
\geq \sum_{n=0}^K D_n^{(\infty)}$ for any $K$.
We can make this lower bound arbitrarily large by taking
$K$ large enough, since $\sum_{n=0}^\infty D_n^{(\infty)}=\infty$.
So indeed $\E^{(m)}T=\sum_{n=0}^\infty D_n^{(m)}\to\infty$ as $m\to\infty$, as required.

Finally suppose that the limiting distribution
$\bp^{(\infty)}$ is in $\cB\intersect\partial\cB$.
To show that the mean of $T^*$ tends to infinity, we
apply a similar argument but now to the reduced tree
constructed in the proof of Theorem \ref{thm:length}(ii)
above.

Write $x^*(\bp)$ for the unique fixed point
of the function $F_{\bp}$.
We have $x^*(\bp^{(m)})\to x^*(\bp^{(\infty)})$
as $m\to\infty$ (since each $F_{\bp^{(m)}}$
and $F_{\bp^{(\infty)}}$ is continuous and strictly
decreasing, and $F_{\bp^{(m)}}(x)\to F_{\bp^{(\infty)}}(x)$
as $m\to\infty$ uniformly over $x\in[0,1]$).
So the first-player and second-player win probabilities
$N=x^*$ and $P=1-x^*$ also converge to their
values under $\bp^{(\infty)}$
(since the draw probability $D$ is $0$ in each case).

It follows that for any $n$,
the distribution of the first $n$ levels of the reduced tree
under $\bp^{(m)}$ converges in total variation distance
as $m\to\infty$ to the distribution under $\bp^{(\infty)}$.
Recall that $T^*$ is the height of this reduced tree.
Under the limit distribution,
we have $\E^{(\infty)} T^*=\sum_{n\geq 0} \P^{(\infty)}(T^*>n)=\infty$,
by Theorem \ref{thm:length}(ii), and by an analogous argument
to the one above for $\E T$, we obtain $\E^{(m)} T^*\to\infty$
as $m\to\infty$ as required.
\end{proof}

We have completed the proof of Theorem \ref{thm:length}
for the case of the normal game. One can prove the result
for the mis\`ere case in an entirely similar way, which
we indicate briefly.

Let $\tx^*$ be the unique fixed point of the function $H$.
For the mis\`ere case, we have analogous criteria
to those in Lemma \ref{lemma:Dcriteria} with $x^*$
replaced by $\tx^*$ (note that $H'\equiv F'$).

In the proof of Lemma \ref{lemma:Dcriteria},
we relied on the fact that $x^*>1/2$ in order
to apply Lemma \ref{lemma:usefulp}.
Since $H$ and $F$ are both decreasing functions,
and $H\geq F$, we have $\tx^*>x^*$ so again $\tx^*>1/2$.
Just as at (\ref{Gepsilon}), if we have a distribution
$\bp$ with generating function $G$ such that $H(\tx^*)=
1-G(\tx^*)+G(0)=\tx^*$ and $H'(\tx^*)=-1$, we can obtain a distribution $\bp_\epsilon$ which is arbitrarily close to $\bp$ in $M_0$, with
generating function $G_\epsilon$
such that $H_\epsilon(\tx^*)=1-G_\epsilon(\tx^*)+G_\epsilon(0)=\tx^*$,
and $H'_\epsilon(\tx^*)<-1$. The rest of the
argument goes through identically, with
$\tx^*$ replacing $x^*$ and
$H$ replacing $F$ throughout.

For the proof of Theorem \ref{thm:length}(ii) in the mis\`ere
case, we again consider a two-type Galton-Watson tree,
where each node is an $\vnm$-node (a first-player win for the
mis\`ere game) or a $\vpm$-node (a second-player win for the
mis\`ere game). The root is an $\vnm$-node
with probability $\tN=\tx^*$ and a $\vpm$-node with probability
$\tP=1-\tx^*$.

Again each $\vpm$-node has only $\vnm$-type children.
Conditional on being a $\vpm$-node, the number of children
has mean $\tN G'(\tN)/\tP$ just as in (\ref{meanNchildren}).
In the mis\`ere case, each $\vnm$ node either has at least
one $\vpm$-type child, or has no children at all. Just as in
(\ref{hatp1}), conditional on being a $\vnm$-node,
the probability of having precisely one $\vpm$-type child
is $\tP G'(\tN)/\tN$. The product of these two
quantities is $G'(\tN)^2$ which again equals $1$.
The rest of the proof is entirely analogous.

\section*{Acknowledgements}

A substantial part of this research was done during an extended visit by both authors to the Mittag-Leffler Institute.  We thank the Institute for the hospitality and excellent facilities.  We thank Johan W\"astlund for valuable conversations.

\bibliography{games}
\bibliographystyle{abbrv}

 \end{document}